\pgfplotsset{compat=1.16}
\tikzstyle{vecArrow} = [thick, decoration={markings,mark=at position
\tikzstyle{noArrow} = [thick, decoration={markings,mark=at position
\numberwithin{equation}{section}
\newtheorem{theorem}{Theorem}[section]
\newtheorem{corollary}[theorem]{Corollary}
\newtheorem{lemma}[theorem]{Lemma}
\newtheorem{proposition}[theorem]{Proposition}
\newtheorem{defprop}[theorem]{Definition-Proposition}
\newtheorem{problem}[theorem]{Problem}
\theoremstyle{definition}
\newtheorem{definition}[theorem]{Definition}
\newtheorem{remark}[theorem]{Remark}
\newtheorem{example}[theorem]{Example}
\newtheorem{notation}[theorem]{Notation}
\newcommand{\Ext}{\operatorname{Ext}\nolimits}
\newcommand{\Tor}{\operatorname{Tor}\nolimits}
\newcommand{\Hom}{\operatorname{Hom}\nolimits}
\newcommand{\End}{\operatorname{End}\nolimits}
\newcommand{\RHom}{\mathbf{R}\strut\kern-.2em\operatorname{Hom}\nolimits}
\newcommand{\Image}{\operatorname{im}\nolimits}
\newcommand{\im}{\operatorname{im}\nolimits}
\renewcommand{\ker}{\operatorname{ker}\nolimits}
\newcommand{\coker}{\operatorname{coker}\nolimits}
\DeclareMathOperator{\moduleCategory}{\mathsf{mod}}
\renewcommand{\mod}{\moduleCategory}
\newcommand{\Hasse}{\operatorname{Hasse}\nolimits}
\DeclareMathOperator{\Rep}{\mathsf{Rep}}
\DeclareMathOperator{\op}{ {\rm op} }
\DeclareMathOperator{\intresdim}{\mathrm{int}\mbox{-}\mathrm{res}\mbox{-}\mathrm{dim}}
\DeclareMathOperator{\intresgldim}{\mathrm{int}\mbox{-}\mathrm{res}\mbox{-}\mathrm{gl{.}dim}}
\DeclareMathOperator{\Vect}{\mathsf{Vect}}
\DeclareMathOperator{\incl}{\mathrm{incl}}
\DeclareMathOperator{\rep}{\mathsf{rep}}
\DeclareMathOperator{\fprep}{\mathsf{fprep}}
\DeclareMathOperator{\colim}{\mathsf{colim}}
\DeclareMathOperator{\leaf}{\mathrm{leaf}}
\DeclareMathOperator{\Coind}{\mathrm{Coind}}
\DeclareMathOperator{\Ind}{\mathrm{Ind}}
\DeclareMathOperator{\Res}{\mathrm{Res}}
\DeclareMathOperator{\Cont}{\mathrm{Cont}}
\DeclareMathOperator{\Lan}{\mathrm{Lan}}
\DeclareMathOperator{\Ran}{\mathrm{Ran}}
\renewcommand{\lim}{\operatorname{\mathsf{lim}}\nolimits}
\DeclareMathOperator{\gldim}{\mathrm{gl{.}dim}}
\DeclareMathOperator{\projdim}{\mathrm{proj{.}dim}}
\DeclareMathOperator{\injdim}{\mathrm{inj{.}dim}}
\DeclareMathOperator{\id}{\mathrm{id}}
\DeclareMathOperator{\floorQ}{\it \lfloor \hspace{1.8mm} \rfloor_Q}
\DeclareMathOperator{\indeg}{\mathrm{indeg}}
\DeclareMathOperator{\outdeg}{\mathrm{outdeg}}
\DeclareMathOperator{\nlessdot}{\not\!\!\lessdot}
\newcommand{\qand}{\operatorname{\quad \text{and} \quad}\nolimits}
\newcommand{\Int}{\operatorname{\mathrm{Int}}\nolimits}
\DeclareMathOperator{\rP}{ {\rm P} }
\DeclareMathOperator{\rI}{ {\rm I} }
\DeclareMathOperator{\rS}{ {\rm S} }
\newcommand{\new}[1]{{\blue #1}}
\newcommand{\old}[1]{{\red #1}}
\newcommand{\com}[1]{{\green #1}}
\begin{document}
\title[On preservation of relative resolutions for  
poset representations]{On preservation of relative resolutions \\ for poset representations}

\author[Toshitaka Aoki]{Toshitaka Aoki\textsuperscript{*}}
\address{Toshitaka Aoki,
  Graduate School of Human Development and Environment, Kobe University, 3-11 Tsurukabuto, Nada-ku, Kobe 657-8501 Japan}
\email{toshitaka.aoki@people.kobe-u.ac.jp}
\thanks{\textsuperscript{*}Corresponding author. Email: toshitaka.aoki@people.kobe-u.ac.jp}
\author[Shunsuke Tada]{Shunsuke Tada}
\address{Shunsuke Tada, Mathematical Science Center for Co-Creative Society, Tohoku University, 6-3 Aoba, Aramaki, Aoba-ku, Sendai, Miyagi 980-8578, Japan}
\email{shunsuke.tada.e6@tohoku.ac.jp}

\begin{abstract}
The concept of Galois connections (i.e., adjoint pairs between posets) is ubiquitous in mathematics. In representation theory, it is interesting because it naturally induces the adjoint quadruple between the categories of persistence modules (representations) of the posets via Kan extensions. One of central subjects in multiparameter persistent homology analysis is to understand structures of persistence modules. 

In this paper, we mainly study a class of Galois connections whose left adjoint is the canonical inclusion of a full subposet. We refer to such a subposet as an interior system, with its corresponding right adjoint given by the floor function. 
In the induced adjoint quadruple, we call the left Kan extension along its floor function the contraction functor. From its construction, it is left adjoint to the induction functor. Under this setting, we firstly prove that this adjoint pair gives an adjoint pair between finitely presentable persistence modules. Moreover, we introduce a special class of interior systems called aligned interior systems, and prove that both induction and contraction functors over them preserve interval-decomposability of modules. Then, we use them to analyze interval covers and resolutions. We also compute interval resolution global dimensions for certain classes of finite posets.
\end{abstract}

\thanks{MSC2020: 16G20, 55N31, 18G25, 16E10}
\keywords{Persistence modules, Galois connection, Interval modules, Relative homological algebra, Multiparameter persistent homology, Topological Data Analysis}

\maketitle

\tableofcontents

\section{Introduction}
Persistent homology analysis~\cite{ELZ02, ZC2004} has become a central method in Topological Data Analysis, with successful applications across various fields~\cite{HBH+16, MVS20, CVJ2021}. 
In the persistent homology pipeline, 
the topological structure of given data are encoded as 
algebraic objects called (one-parameter) persistence modules. In fact, each interval-summand represents a  topological feature of the data. 

Nowadays, the concept of Galois connections (adjoint pairs between posets) appears in various branch of mathematics,
and it also provides one of fundamental tools in computer science \cite{MSS86}. 
In representation theory, it is also interesting because a Galois connection 
$f \dashv g \colon Q \rightleftarrows P$ naturally induces an adjoint quadruple between the categories of (not necessarily pointwise finite dimensional) persistence modules via the left and right Kan extensions as follows: 
\begin{equation} \label{intro:adj quadrupe}
    \quad \ 
    f \dashv g \colon Q\rightleftarrows P, 
    \quad \quad \quad \quad \quad
    \xymatrix@C=68pt@R=38pt{
    \Rep Q 
    \ar@/^3.5mm/[r]|{g^*=\Lan_f} \ar@/^10mm/@{<-}[r]|{\Lan_g} 
    \ar@{}[r]|{\perp}
    \ar@{}@<7mm>[r]|{\perp}
    \ar@{}@<-7mm>[r]|{\perp}
    & 
    \ar@/^3.5mm/[l]|{f^*= \Ran_g} \ar@/^10mm/@{<-}[l]|{\Ran_f} 
    \Rep P. 
    }
\end{equation}

Recently, the study of persistence modules (or representations) over arbitrary posets has played a promising role in extending the one-parameter persistent homology pipeline.
In particular, efforts have been made to apply multiparameter persistent homology, although several theoretical and computational challenges still remain \cite{CZ09,BBK2020}.
In the context of poset representations, the class of interval modules, which are defined as indicator functors of certain subsets called intervals, 
provides a common generalization of indecomposable projective, indecomposable injective, and simple modules. 
Several numerical invariants defined from interval modules have been proposed in persistence theory, 
including the fibered barcode \cite{LW15}, the generalized rank invariant \cite{KM21}, the compressed multiplicity \cite{AENY,AGL24}, and others \cite{BOOS24,AL24,LW15}.
Based on relative homological algebra, approximations and resolutions relative to interval-decomposable modules, called interval approximations and interval resolutions respectively, have been studied \cite{AET,AENY,BBH1,BBH2}. 
In addition, a computation of their relative Betti numbers is presented in \cite{Asa25,CGR+24}.

An aim of this paper is to develop the study of persistence modules from a perspective of Galois connections. 
In the context of persistence theory, it has been adopted in \cite{GM} for giving a new approach to bottleneck stability for one-parameter persistent homology through so-called Rota's Galois connection theorem. 
A class of Galois connections whose right or left adjoint is an order-embedding is fundamental, as it always appears as a component in the decomposition of Galois connections (see Section~\ref{subsec:contraction}). 
We recall from \cite{EKMS} that an interior system is a full subposet $Q\subseteq P$ whose canonical embedding has a right adjoint as $\incl_Q \dashv \floorQ \colon Q\rightleftarrows P$. 
In this setting, we will write the diagram \eqref{intro:adj quadrupe} as 
\begin{equation}\label{intro:idemp_quad}
    \incl_Q \dashv \floorQ \colon Q\rightleftarrows P, 
    \quad \quad \quad 
    \xymatrix@C=68pt@R=38pt{
    \Rep Q 
    \ar@/^3.5mm/[r]|{\Ind_Q} \ar@/^10mm/@{<-}[r]|{\Cont_Q} 
    \ar@{}[r]|{\perp}
    \ar@{}@<7mm>[r]|{\perp}
    \ar@{}@<-7mm>[r]|{\perp}
    & 
    \ar@/^3.5mm/[l]|{\Res_Q} \ar@/^10mm/@{<-}[l]|{\Coind_Q} 
    \Rep P. 
    }
\end{equation}

In this diagram, the left Kan extension $\Cont_Q := \Lan_{\floorQ}$ is called \emph{contraction functor}. 
This functor first appears in the study of persistence modules over a finite product of totally ordered sets \cite{BBH2}, where they define contraction functors over finite aligned subgrids. 
Their definition looks slightly different from the above one, but they are naturally isomorphic by Proposition~\ref{prop:Cont_another}, see Example \ref{ex:aligned posets}(3) for more details. 
On the other hand, we note that $\Ind_Q$, $\Res_Q$, $\Coind_Q$ coincide with the usual induction, restriction, co-induction functors respectively 
when we work on incidence algebras of finite posets, and they can be defined for any full subposet (See Section~\ref{subsec:Kan extension}). 
It is important that $\Ind_Q$ is given by the pullback functor along $\floorQ$ and hence easily described in this setting. 
It is a basic property that $\Ind_Q$ preserves indecomposable projectives. 
In addition, we find that $\Cont_Q$ also satisfies this property (Proposition \ref{prop:basic Cont}(4)), 
which turns out that 
the adjoint pair $\Cont_Q \dashv \Ind_Q$ provides an adjoint pair between finitely presentable persistence modules (Proposition \ref{prop:basic Cont}(5)).

Mostly, the existence of operations that preserve properties, invariants, etc. we are focusing on brings us much information. 
In our study, it is mainly played by a class of functors that preserve interval-decomposability of modules.
We discuss here this preservation under relevant functors for a given full subposet $Q\subseteq P$. 
\begin{enumerate}[$\bullet$]
    \item $\Res_Q$ clearly preserves interval-decomposability of modules. 
    \item $\Ind_Q$ may send interval modules to non-interval-decomposable modules \cite[Example 4.9]{AET}.
    \item If $Q$ is an interior system, $\Ind_Q$ preserves interval-decomposability of modules (Proposition~\ref{prop:interval-Ind-Cont}). 
    \item Even if $Q$ is an interior system, $\Cont_Q$ may send interval modules to non-interval-decomposable modules (Remark~\ref{rem:preservation_intervals}).
\end{enumerate}

In this reason, we introduce a class of \emph{aligned interior systems} (Definition \ref{def:aligned intersys}) based on combinatorics of posets, over which contraction functors admit simpler descriptions using filtered colimits.
Thanks to this description, we prove in Proposition \ref{prop:interval-Ind-Cont} that $\Cont_Q$ over an aligned interior system $Q$ preserves interval-decomposability of modules, and in particular, 
the adjoint pair $\Cont_Q \dashv \Ind_Q$ gives an adjoint pair between interval-decomposable persistence modules. 
Then, we use this relationship to analyze interval covers and interval resolutions. 
One of our main results is Theorem \ref{thm:Ind-intervalcover}, which asserts that the induction functor over an aligned interior system preserves interval covers and interval resolutions. 
Here, we remark that there are no structural requirements on the base poset in this result, such as upper-semilattices or locally finiteness. 
We also remark that, both the existence of its left adjoint, contraction functor, and preservation of interval-decomposability are crucial in our proof. 
In persistent homology analysis, this result is helpful because it gives a method to reduce computational burden for computing interval covers/resolutions of a given persistence module, see Remark \ref{rem:computational interval cover}. 
In particular, it is applicable to any finitely presentable multiparameter persistence modules, often arising from multiparameter filtrations of point cloud data,
since finite aligned subgrids over a finite product of totally ordered sets \cite[Definition 7.1]{BBH2} form a special case of our aligned interior systems. 
On the other hand, it is shown in \cite[Theorem 4.1]{AET} that the intermediate extension, which originates in \cite{BBD}, preserves interval-decomposability of modules and is used to prove monotonicity of interval resolution global dimensions with respect to full subposet inclusions. 
Although this functor also arises from the adjoint triple, it is different from the contraction functor.

As an application, we provide a technique for transforming a given finite poset into a smaller one 
with preserving interval resolution global dimension (Theorem \ref{thm:cont chains}). 
Notice that this is closely related to conjectures \cite[Conjectures 4.11, 4.12]{AENY} that ask stabilization phenomena on interval resolution global dimensions of finite commutative grids. 

At the end of this paper, we compute interval resolution global dimensions of certain classes of finite posets. 
Especially, we give an explicit formula of interval resolution global dimensions for tree-type posets and $\tilde{A}$-type posets in Propositions \ref{prop:dimTree} and \ref{prop:dimAtilde} respectively. 
We also work on a classification of posets having low interval resolution global dimensions in Section \ref{subsec:low intresgildim}. \\

\noindent 
{\bf Notation.}
For two categories $\mathcal{C}$ and $\mathcal{D}$, 
a pair $F\colon \mathcal{C} \to \mathcal{D}$ and $G\colon \mathcal{D}\to \mathcal{C}$ of (covariant) functors is called \emph{adjoint pair}, with $F$ left adjoint and $G$ right adjoint, 
if there is an isomorphism $\Hom_{\mathcal{D}}(F(X),Y) \simeq \Hom_{\mathcal{C}}(X,G(Y))$ which is natural in $X\in \mathcal{C}$ and $Y\in \mathcal{D}$. 
In this case, we will write $F\dashv G\colon \mathcal{C}\rightleftarrows \mathcal{D}$.

\section{Preliminaries}
\label{sec:prelim}
In this section, we recall basic definitions and results on persistence modules over posets. Throughout this paper, we fix a base field $k$. 

\subsection{Persistence modules}
Let $P$ be a partially ordered set equipped with partial order $\leq$. 
We regard it as a category whose objects are elements of $P$ and there is a unique morphism from $a$ to $b$ for $a, b\in P$ if and only if $a\leq b$. 
A \emph{full subposet} of $P$ is a subset $S$ together with the induced partial order. 
We denote by $\min(S)$ (resp., $\max(S)$) the set of minimal (resp., maximal) elements of $S$. 
In addition, let $S^{\uparrow}$ be the set of elements $x\in P$ such that $a\leq x$ for some $a\in S$. 
The subset $S^{\downarrow}$ is defined dually.  
If $S=S^{\uparrow}$ (resp., $S=S^{\downarrow}$), 
then it is called \emph{upset} (resp., \emph{down set}) of $P$. 
When $S$ is a singleton $\{a\}$, we sometimes identify it with the element. 
For example, we will use notations $a^{\uparrow}$ and $a^{\downarrow}$. 
We say that an upper bound (resp., lower bound) of $S$ is an element $c\in P$ such that $S\subseteq c^{\downarrow}$ (resp., $S\subseteq c^{\uparrow}$). 
Then, we say that a poset $P$ is \emph{filtered} if every pair of elements has an upper bound. 
For example, any upper-semilattice is filtered, but the converse fails in general. 
Finally, we write $P^{\op}$ for the \emph{opposite poset} of $P$, which is obtained from $P$ by reversing the partial order. 


Next, we discuss persistence modules over a poset $P$. 
By a \emph{$P$-persistence module}, we mean a functor from $P$ to the category $\Vect_k$ of (not necessarily finite dimensional) vector spaces over $k$. 
From its definition, a $P$-persistence module $M$ consists of correspondences such that 
\begin{itemize}
    \item it associates an element $a\in P$ to a $k$-vector space $M(a)$, and 
    \item it associates a relation $a\leq b$ in $P$ to a $k$-linear map $M(a \leq b)\colon M(a)\to M(b)$ in such a way that $M(a\leq a) = \id_{M(a)}$ and $M(a\leq c) = M(b\leq c)\circ M(a\leq b)$ for all $a\leq b \leq c$ in $P$. 
\end{itemize}
We say that $M$ is \emph{pointwise finite dimensional} (pfd for short) if $M(a)$ is finite dimensional for every $a\in P$.
A \emph{morphism} $\varphi \colon M\to N$ of $P$-persistence modules is a natural transformation from $M$ to $N$, 
that is, a family of $k$-linear maps $\varphi(a)\colon M(a) \to N(a)$ indexed by elements of $P$ and satisfying $\varphi(b) \circ M(a\leq b) = N(a\leq b)\circ \varphi(a)$ for all $a\leq b$ in $P$.
\begin{equation*}
    \xymatrix@C38pt@R27pt{
    M(a) \ar[r]^{M(a\leq b)} \ar[d]^{\varphi(a)} & M(b) \ar[d]^{\varphi(b)} \\ 
    N(a) \ar[r]^{N(a\leq b)} \ar@{}[ru]|{\circlearrowleft}& N(b). 
    }
\end{equation*}

We denote by $\Rep P$ the category of $P$-persistence modules, by $\rep P$ the full subcategory of $\Rep P$ consisting of those $P$-persistence modules which are pfd. 
It is well-known (see \cite{BCB}) that every pfd $P$-persistence module $M$ can be written as $\bigoplus_{\lambda\in \Lambda}M_{\lambda}$ with indecomposable $M_{\lambda}$. 
Moreover, this description is unique up to isomorphism and a permutation of the index set.

Next, we recall the definitions of intervals and interval modules over a given poset $P$. 

\begin{definition}
    Let $S\subseteq P$ be a subset. 
    \begin{enumerate}[\rm (a)]
        \item We say that $S$ is \emph{convex} if for any $x,y\in S$ and $z\in P$, 
        $x \leq z \leq y$ implies $z\in S$. 
        \item We say that $S$ is \emph{connected} if for any $x,y\in S$ there is a sequence $x=z_0,z_1,\ldots, z_m = y$ of elements of $S$ such that $z_{i-1}$ and $z_{i}$ are comparable for all $i\in \{1,\ldots,m\}$. 
        \item We say that $S$ is an \emph{interval} if it is convex and connected (This is called \emph{spread} in \cite{BBH1}). 
    \end{enumerate}
We denote by $\Int(P)$ the set of all intervals of $P$. 
\end{definition}


For a given interval $S \subseteq P$, one defines a pfd $P$-persistence module $\mathbb{I}_S$ called \emph{interval module} as follows: 
\begin{equation}\label{eq:def_interval}
    \mathbb{I}_S(x) := 
    \begin{cases}
        k & \text{if $x\in S$}, \\ 
        0 & \text{otherwise} 
    \end{cases} 
    \qand 
    \mathbb{I}_S(x\leq y ) := 
    \begin{cases}
        \id_k & \text{if $x,y\in S$}, \\ 
        0 & \text{otherwise}. 
    \end{cases}     
\end{equation}
Then, $\mathbb{I}_S$ is indecomposable whenever $S$ is non-empty. 
Indeed, it satisfies $\End_{P}(\mathbb{I}_S)\cong k$ \cite[Proposition 2.7]{BBH2}. 
Moreover, we say that a $P$-persistence module is \emph{interval-decomposable} if it is isomorphic to a direct sum of interval modules. 
For a convex subset $S\subseteq P$, we also denote by  $\mathbb{I}_S$ the interval-decomposable module given by \eqref{eq:def_interval}.  

\begin{proposition} \cite[Theorem 2.10]{BBH2} \label{prop:proj as interval}
    Let $M\in \Rep P$ be indecomposable. 
    Then, it is projective in $\Rep P$ if and only if there exists $a\in P$ such that $M\cong \mathbb{I}_{a^{\uparrow}}$. 
\end{proposition}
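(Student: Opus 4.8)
The plan is to recognize $\mathbb{I}_{a^{\uparrow}}$ as the representable $P$-persistence module at $a$ and then to run a Krull--Remak--Schmidt argument in its infinitary (Azumaya) form.

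For the implication ``$M\cong\mathbb{I}_{a^{\uparrow}}\ \Rightarrow\ M$ projective'', I would note that $\mathbb{I}_{a^{\uparrow}}$ is exactly the functor sending $x$ to the free $k$-vector space on $\Hom_P(a,x)$, i.e.\ to $k$ if $a\leq x$ and to $0$ otherwise, with the evident structure maps; in other words, it is the representable module at $a$. By the Yoneda lemma, $\Hom_P(\mathbb{I}_{a^{\uparrow}},N)\cong N(a)$ naturally in $N\in\Rep P$. Since kernels and cokernels in $\Rep P$ are formed pointwise, the evaluation functor $N\mapsto N(a)$ is exact, hence so is $\Hom_P(\mathbb{I}_{a^{\uparrow}},-)$, and $\mathbb{I}_{a^{\uparrow}}$ is projective. (It is indecomposable since $a^{\uparrow}\ni a$ is nonempty, so $\End_P(\mathbb{I}_{a^{\uparrow}})\cong k$ by the remark preceding the statement.)

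For the converse, assume $M$ is indecomposable and projective. First I would build a presentation of $M$ by representables: choosing a $k$-basis $B_x$ of $M(x)$ for every $x\in P$ and converting each $v\in B_x$ into a morphism $\mathbb{I}_{x^{\uparrow}}\to M$ via Yoneda, I obtain a morphism $\pi\colon F:=\bigoplus_{x\in P}\bigoplus_{v\in B_x}\mathbb{I}_{x^{\uparrow}}\to M$; evaluating at any $y\in P$, the component at $x=y$ is already the surjection $\bigoplus_{v\in B_y}k\twoheadrightarrow M(y)$, $e_v\mapsto v$, so $\pi$ is an epimorphism. Projectivity of $M$ makes $\pi$ split, so $M$ is a direct summand of the coproduct $F$ of representables. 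Now each $\mathbb{I}_{x^{\uparrow}}$ has endomorphism ring $k$, which is local; by the Krull--Remak--Schmidt--Azumaya theorem the given decomposition of $F$ complements direct summands (this holds for any direct sum of modules with local endomorphism rings), so there is a subset $J$ of the index set with $F=M\oplus\bigoplus_{(x,v)\in J}\mathbb{I}_{x^{\uparrow}}$. Passing to the quotient yields $M\cong\bigoplus_{(x,v)\notin J}\mathbb{I}_{x^{\uparrow}}$, and since $M$ is indecomposable and nonzero exactly one index survives; writing $a$ for its first coordinate gives $M\cong\mathbb{I}_{a^{\uparrow}}$.

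I expect the only genuine obstacle to be this last step. The category $\Rep P$ need not be a Krull--Schmidt category when $P$ is infinite, so the finite uniqueness of decompositions is unavailable; what rescues the argument is precisely that $\End_P(\mathbb{I}_{x^{\uparrow}})=k$ is local, which is exactly the hypothesis under which Azumaya's theorem (via the exchange property) guarantees that the canonical decomposition of $F$ complements arbitrary direct summands. Everything else---naturality in Yoneda, pointwise exactness of evaluation, surjectivity of $\pi$---is routine. If one only wanted the statement inside $\rep P$ over a \emph{finite} poset $P$, one could instead induct on $\dim_k M$ and bypass the infinitary machinery, but the theorem as stated concerns all of $\Rep P$.
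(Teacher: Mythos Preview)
The paper does not supply its own proof of this proposition; it is stated with a citation to \cite[Theorem~2.10]{BBH2} and no argument is given. So there is nothing in the paper to compare your attempt against.

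Your proof is correct and self-contained. The forward direction is the standard Yoneda/evaluation argument: $\mathbb{I}_{a^{\uparrow}}$ is the $k$-linearized representable at $a$, and evaluation at $a$ is exact because (co)limits in $\Rep P$ are computed pointwise. For the converse, writing $M$ as a summand of a coproduct of representables and invoking the Krull--Remak--Schmidt--Azumaya theorem is the right move; the hypothesis that each $\mathbb{I}_{x^{\uparrow}}$ has local endomorphism ring (indeed $\End_P(\mathbb{I}_{x^{\uparrow}})\cong k$, as recorded in the paper just before the proposition) is exactly what is needed for the decomposition of $F$ to complement arbitrary direct summands. Your remark that the finite/pfd case can be handled by a dimension induction without Azumaya is also accurate and worth keeping, since many readers will be primarily interested in that setting.
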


We say that a $P$-persistence module $M$ is \emph{finitely presentable} if it is isomorphic to the cokernel of a morphism between finitely generated projectives, namely, of the form $\coker\big(\bigoplus_{b\in \mathcal{J}} \mathbb{I}_{b^{\uparrow}} \to \bigoplus_{a\in \mathcal{I}} \mathbb{I}_{a^{\uparrow}}\big)$, 
where $\mathcal{I}$ and $\mathcal{J}$ are finite multisets of elements of $P$. 
We denote by $\fprep P$ the full subcategory consisting of all finitely presentable $P$-persistence modules. 
By definition, we have $\fprep P \subseteq \rep P \subseteq \Rep P$. 
In addition, we denote by $\mathscr{I}_P$ the full subcategory consisting of finitely presentable interval-decomposable $P$-persistence modules. 
Notice that, if $P$ is finite, then every pfd $P$-persistence module is finitely presentable and hence $\fprep P = \rep P$.

\begin{remark}\label{rem:colimit}
In the context of category theory, 
every $P$-persistence module $M\colon P\to \Vect_k$ is nothing but a diagram of shape $P$ in the category $\Vect_k$. 
Thus, one can consider the colimit and the limit of this diagram. 
We recall these notions as follows. 
We say that a \emph{co-cone} of the diagram $M$ is a $k$-vector space $C$ together with a family of $k$-linear maps $\beta_a\colon M(a) \to C$ indexed by elements $a\in P$ satisfying $\beta_b\circ M(a\leq b) = \beta_a$ for any $a\leq b$ in $P$. 
In addition, a \emph{colimit} of the diagram $M$ is a co-cone $(L,\alpha)$ of $M$ such that, for any co-cone $(C,\beta)$ of $M$, there is a unique $k$-linear map $\rho \colon L\to C$ such that $\rho \circ \alpha_a = \beta_a$ for all $a\in P$. 
\begin{equation}
    \xymatrix{
    M(a) \ar[rd]_{\alpha_a} \ar[rr]^-{M(a\leq b)} 
    \ar@/_5mm/[rdd]_{\beta_a} 
    && M(b) \ar[ld]^{\alpha_b} 
    \ar@/^5mm/[ldd]^{\beta_b} 
    \\ 
    & L \ar@{.>}[d]^-{\rho} 
    & \\ 
    & C. &
    }
\end{equation}
By the universal property, a colimit is unique up to a unique isomorphism. 
A \emph{limit} of $M$ is defined dually. 
We will write $\colim M$ (resp., $\lim M$) for a $k$-vector space of this colimit (resp., limit). 
We note that, even if $M$ is pfd, $\colim M$ and/or $\lim M$ can be infinite dimensional in general. 

Moreover, we have a \emph{colimit functor} 
$\colim \colon \Rep P\to \Vect_k$ that assigns to each $M$ its colimit $\colim M$. 
In fact, for a morphism $\varphi \colon M \to N$ of $P$-persistence modules, 
there is a unique $k$-linear map from $\colim M$ to $\colim N$ 
given by the universal property of colimits. 
We will denote this map by $\colim \varphi$. 
It commutes the following diagram, 
where $(\colim M,\alpha)$ and $(\colim N,\alpha')$ are colimits of $M$ and $N$ respectively.  
\begin{equation}
    \xymatrix@C20pt{
    M(a) \ar[rd]_{\alpha_a} \ar[rr]^-{M(a\leq b)} 
    \ar[dd]_{\varphi_a} 
    && M(b) \ar[ld]^{\alpha_b} 
    \ar[dd]^{\varphi_b} 
    \\ 
    & \colim M \ar@{.>}[dd]^(0.3){\colim \varphi} & \\ 
    N(a) \ar[rd]_{\alpha'_a} \ar[rr]^(0.3){N(a\leq b)}|\hole
    &  & N(b)\ar[ld]^{\alpha'_b}  \\ 
    & \colim N.&
    }
\end{equation}
Similarly, we have a \emph{limit functor} $\lim\colon \Rep P \to \Vect_k$. 

Let $Q$ be a full subposet of $P$ and $M\colon P\to \Vect_k$ a $P$-persistence module. 
We write $M|_Q$ 
for the $Q$-persistence module obtained from $M$ by restricting its domain to $Q$. 
In this situation, we have the induced maps 
$\colim M|_Q \to \colim M$ and 
$\lim M|_Q \to \lim M$ by the universal properties of colimits and limits respectively.

\end{remark}

The following computational results for colimits will be useful. 

\begin{lemma}\label{lem:colim_convex}
    Let $P$ be a poset. 
    Let $S\subseteq P$ be an interval and $\mathbb{I}_S\in \Rep P$ the corresponding interval module. 
    If $S$ is non-empty and an upset of $P$, 
    then the induced map $\colim \mathbb{I}_S|_S \to \colim \mathbb{I}_S$ is a $k$-linear isomorphism between one-dimensional $k$-vector spaces. 
    Otherwise, we have $\colim \mathbb{I}_S =0$. 
\end{lemma}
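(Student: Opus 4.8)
The plan is to compute $\colim \mathbb{I}_S$ directly from the universal property, splitting into the two cases according to whether $S$ is a non-empty upset. First consider the case where $S$ is non-empty and an upset of $P$. Since $S$ is an interval, it is in particular connected and convex; being an upset, for any $x \in S$ and $y \in P$ with $x \leq y$ we have $y \in S$, so all the structure maps $\mathbb{I}_S(x \leq y)$ between points of $S$ are identities on $k$. I would first argue that $\colim \mathbb{I}_S|_S \cong k$: one checks that $(k, (\id_k)_{x \in S})$ is a co-cone over $\mathbb{I}_S|_S$ (using that $S$ is connected, all the ``partial'' identifications are consistent), and that it is initial among co-cones — given any co-cone $(C, \beta)$, connectedness forces all the $\beta_x$ ($x \in S$) to factor through a single map $k \to C$, and convexity/connectedness ensure well-definedness. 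Hence $\colim \mathbb{I}_S|_S$ is one-dimensional.

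Next I would show the induced map $\colim \mathbb{I}_S|_S \to \colim \mathbb{I}_S$ is an isomorphism. The key observation is that the inclusion $S \hookrightarrow P$ is \emph{final} (cofinal) as a functor of categories when $S$ is a non-empty upset: for every $a \in P$, the comma category $a^{\uparrow} \cap S$ (objects of $S$ above $a$) is non-empty (pick any $s \in S$; since $S$ is an upset... actually one needs $a \le s$; here use that $S$ being an upset and non-empty does not immediately give this) — so I would instead argue more hands-on. The module $\mathbb{I}_S$ vanishes outside $S$, and for $a \notin S$, $a^{\uparrow}$ meets $S$ precisely in an upset of $a^{\uparrow}$; the colimit of $\mathbb{I}_S$ over all of $P$ only sees the points where $\mathbb{I}_S$ is non-zero together with the structure maps emanating from them, and since every structure map out of a point of $S$ lands in $S$ (upset property) and is an identity, the co-cones over $\mathbb{I}_S$ are in natural bijection with co-cones over $\mathbb{I}_S|_S$. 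Therefore the induced map is an isomorphism of one-dimensional spaces.

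For the remaining case, suppose $S$ is empty or $S$ is non-empty but not an upset. If $S = \emptyset$ then $\mathbb{I}_S = 0$ and $\colim \mathbb{I}_S = 0$ trivially. If $S \neq \emptyset$ is not an upset, there exist $x \in S$ and $y \in P \setminus S$ with $x \leq y$. I claim the zero vector space (with zero maps) is a colimit, i.e.\ every co-cone $(C, \beta)$ has $\beta_x = 0$ for all $x$, which forces the comparison map $0 \to C$ to be the unique one. Indeed, for such $x \leq y$ with $x \in S$, $y \notin S$, the co-cone condition gives $\beta_y \circ \mathbb{I}_S(x \leq y) = \beta_x$; but $\mathbb{I}_S(x \leq y) = 0$ since $y \notin S$, so $\beta_x = 0$. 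Since $S$ is connected, $\beta_z = 0$ propagates to every $z \in S$ (each structure map between points of $S$ is an identity, hence $\beta_{z'} = \beta_{z' } \circ \id = \beta_z \circ (\text{identity}) $ chains force all equal, hence all zero); and $\beta_z = 0$ for $z \notin S$ automatically. Thus $(0,0)$ satisfies the universal property and $\colim \mathbb{I}_S = 0$.

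The main obstacle I anticipate is the finality argument in the first case: making precise that ``co-cones over $\mathbb{I}_S$ on $P$ correspond to co-cones over $\mathbb{I}_S|_S$'' requires care because a co-cone on $P$ a priori assigns maps $\beta_a$ for $a \notin S$ as well, and one must check these are forced to be $0$ (using that $a \notin S$ means $\mathbb{I}_S(a) = 0$) so that no extra data or constraints appear — the only subtlety being compatibility with structure maps \emph{into} points of $S$ from points outside $S$, which cannot happen precisely because $S$ is an upset. Once that is nailed down, everything else is a routine diagram chase.
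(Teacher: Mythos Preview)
The paper states this lemma without proof, treating it as an elementary computation. Your approach is correct and is the natural direct verification from the universal property.

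One slip to fix: in your final paragraph you write that the upset hypothesis rules out ``structure maps \emph{into} points of $S$ from points outside $S$.'' That is backwards. Such maps (i.e.\ $a\leq b$ with $a\notin S$, $b\in S$) can and do occur when $S$ is an upset; they impose no constraint because $\mathbb{I}_S(a)=0$ makes the compatibility condition $\beta_b\circ 0 = 0$ automatic. What the upset hypothesis actually rules out is the \emph{opposite} direction, $a\leq b$ with $a\in S$ and $b\notin S$: precisely these would force $\beta_a=0$ (as you correctly exploit in the non-upset case), and their absence is what makes the bijection between co-cones over $P$ and co-cones over $S$ go through. With that correction, your argument is complete.
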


\begin{lemma}\label{lem:Scolim}
    Let $P$ be a poset and $S\subseteq P$ a full subposet of $P$. 
    If $S$ is filtered and satisfies $P\subseteq S^{\downarrow}$, 
    then for any $M\in \Rep P$, the induced map $\colim M|_{S} \to \colim M$ is an isomorphism. 
\end{lemma}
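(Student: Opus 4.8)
The plan is to exhibit an explicit two‑sided inverse to the canonical map $\iota\colon \colim M|_S \to \colim M$, using the cofinality hypothesis $P\subseteq S^{\downarrow}$ together with the filteredness of $S$. Fix colimit cones $(\colim M|_S,(\alpha_s)_{s\in S})$ of $M|_S$ and $(\colim M,(\gamma_p)_{p\in P})$ of $M$ in $\Vect_k$ (these exist since $\Vect_k$ is cocomplete), so that $\iota$ is the unique map with $\iota\circ\alpha_s=\gamma_s$ for all $s\in S$.

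First I would promote the cone $(\alpha_s)_{s\in S}$ to a co‑cone $(\beta_p)_{p\in P}$ on the whole diagram $M$. For $p\in P$, the hypothesis $P\subseteq S^{\downarrow}$ provides some $s\in S$ with $p\leq s$, and I set $\beta_p:=\alpha_s\circ M(p\leq s)\colon M(p)\to\colim M|_S$. The crux is the well‑definedness of $\beta_p$: given $s,s'\in S$ with $p\leq s$ and $p\leq s'$, filteredness of $S$ yields $t\in S$ with $s\leq t$ and $s'\leq t$, and the cone relations $\alpha_s=\alpha_t\circ M(s\leq t)$ and $\alpha_{s'}=\alpha_t\circ M(s'\leq t)$ force $\alpha_s\circ M(p\leq s)=\alpha_t\circ M(p\leq t)=\alpha_{s'}\circ M(p\leq s')$. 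Compatibility of the $\beta_p$ with the transition maps of $M$ is then easy: for $p\leq p'$ choose $s'\in S$ with $p'\leq s'$; since $p\leq s'$ as well, this same $s'$ computes both $\beta_p$ and $\beta_{p'}$, and $\beta_{p'}\circ M(p\leq p')=\alpha_{s'}\circ M(p\leq s')=\beta_p$. By the universal property of $\colim M$ this co‑cone induces a map $\pi\colon\colim M\to\colim M|_S$ with $\pi\circ\gamma_p=\beta_p$ for all $p\in P$.

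It then remains to check $\pi\circ\iota=\id$ and $\iota\circ\pi=\id$, which is routine after the observation that $\beta_s=\alpha_s$ for $s\in S$ (take $s$ itself in the definition). Precomposing $\pi\circ\iota$ with $\alpha_s$ gives $\pi\circ\gamma_s=\beta_s=\alpha_s$, so $\pi\circ\iota=\id$ by uniqueness in the universal property of $\colim M|_S$; precomposing $\iota\circ\pi$ with $\gamma_p$ gives $\iota\circ\beta_p=\gamma_s\circ M(p\leq s)=\gamma_p$ for $s\in S$ with $s\geq p$, so $\iota\circ\pi=\id$ by uniqueness for $\colim M$. Hence $\iota$ is an isomorphism.

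The only genuinely nontrivial point is the well‑definedness of $\beta_p$, and that is precisely where both hypotheses enter: categorically, they say that the inclusion $S\hookrightarrow P$ is a final functor, since each comma category $\{s\in S:p\leq s\}$ is nonempty (by $P\subseteq S^{\downarrow}$) and filtered, hence connected. So an alternative to the hands‑on argument above is simply to cite the standard fact that final functors are colimit‑cofinal. Either way, nothing uses the particular structure of $\Vect_k$, so the same argument works with $\Vect_k$ replaced by any cocomplete category.
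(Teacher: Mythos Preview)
Your proof is correct and follows essentially the same approach as the paper: both construct the co-cone $(\beta_p)_{p\in P}$ on $\colim M|_S$ by the formula $\beta_p=\alpha_s\circ M(p\leq s)$ and use filteredness of $S$ for well-definedness. You are simply more explicit than the paper, which stops after building the co-cone and declares this sufficient, whereas you spell out the induced inverse $\pi$ and verify both composites are identities.
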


\begin{proof}
    It is sufficient to check that $\colim M|_S$ is a co-cone of $M$. 
    Let $a\in P$. 
    By $P\subseteq S^{\downarrow}$, 
    we have an element $s\in S$ such that $a\leq s$. 
    Consider a composition  
    \[
    \beta_{a,s} := \left( M(a) \xrightarrow{M(a\leq s)} M(s) \xrightarrow{\alpha_s} \colim M|_S \right), 
    \]
    where $\alpha_s$ is a map for the colimit of the diagram $M|_S$ at $s$. 
    Since $S$ is filtered, we find that 
    $\beta_{a,s} = \beta_{a,s'}$ holds for any $s,s'\in S$ with $a\leq s,s'$. 
    So, we write this map by $\beta_a$. 
    Then, they clearly give a co-cone $(\colim M|_S, (\beta_a)_{a\in P})$ of the diagram $M$, as desired.
\end{proof}

In describing a given finite poset, the notion of the Hasse diagram is useful. 

\begin{definition}\label{def:Hasse}
Suppose that $P$ is a finite poset.
For two elements $x,y\in P$, we say that \emph{$y$ covers $x$} 
and write $x\lessdot y$ if $x < y$ and every element $z\in P$ such that $x \leq z < y$ satisfies $x=z$. 
More generally, for $y\in P$ and $S\subseteq P$, 
we write $S\lessdot y$ if 
    $y\in S^{\uparrow}\setminus S$ and every $z\in S^{\uparrow}$ with $z < y$ belongs to $S$. 
Dually, $y\lessdot S$ is defined. 

The \emph{Hasse diagram} $\Hasse(P)$ of $P$ is a directed graph whose vertices are elements of $P$ and there is an arrow $x\to y$ if and only if $x\lessdot y$ (i.e., $y$ covers $x$). 
It is clear that the Hasse diagram recovers the original finite poset $P$. 
\end{definition}

\subsection{Kan extensions along order-preserving maps} \label{subsec:Kan extension}
For an order-preserving map $f\colon Q\to P$, 
we have a functor $f^*\colon \Rep P \to \Rep Q$ called \emph{pullback} along $f$, 
where a $Q$-persistence module $f^{\ast}M$ is defined by 
\begin{equation}
    f^*M(x) := M(f(x)) \qand 
    f^*M(x\leq y) := M(f(x)\leq f(y)) 
\end{equation}
for any $P$-persistence module $M$, 
and a morphism $f^*\varphi \colon f^*M\to f^*N$ is defined by $f^*\varphi(x) := \varphi(f(x))$ for any morphism $\varphi\colon M\to N$ between $P$-persistence modules. 
For a full subposet $Q\subseteq P$, 
the pullback along the canonical inclusion $\incl_Q \colon Q\hookrightarrow P$ is called \emph{restriction functor} and denoted by $\Res_Q$. 
Namely, it sends each $P$-persistence module $M$ to a $Q$-persistence module $M|_Q$, where we note that this notation is consistent with the one used in the last paragraph of Remark~\ref{rem:colimit}.

Next, we recall the notion of Kan extensions (We refer to  \cite{riehl2017category} for basics of category theory). 
For an order-preserving map $f\colon Q\to P$, 
we say that the \emph{left Kan extension} along $f$ is a functor $\Lan_f \colon \Rep Q \to \Rep P$ defined as follows: 
For a $Q$-persistence module $M$, we set   
\begin{equation}
    \Lan_f M(a) := \colim (M|_{f^{-1}(a^{\downarrow})}) 
    \qand 
    \Lan_f M(a\leq b) := \left(\colim (M|_{f^{-1}(a^{\downarrow})}) \to 
    \colim (M|_{f^{-1}(b^{\downarrow})})\right), 
\end{equation}
where the right-most arrow is the $k$-linear map given by the universal property of colimits 
(We recall from Remark \ref{rem:colimit} a meaning of colimits of persistence modules). 
In addition, for a morphism $\varphi \colon M\to N$ between $Q$-persistence modules, 
we have a morphism $\Lan_f\varphi \colon \Lan_f M\to \Lan_f N$ such that a $k$-linear map $\Lan_f \varphi(a) \colon \Lan_f M(a)\to \Lan_f N(a)$ is given by the universal property of colimits for each $a\in P$. 
They, indeed, define a functor $\Lan_f$. 
Dually, the \emph{right Kan extension} $\Ran_f\colon \Rep Q \to \Rep P$ is defined. 
From its construction, $\Lan_f$ is left adjoint to $f^*$, and $\Ran_f$ is right adjoint to $f^*$. 
Therefore, they form an adjoint triple $\Lan_f \dashv f^{*} \dashv \Ran_f$: 
\begin{equation} 
    \xymatrix@C=68pt@R=38pt{
    \Rep Q 
    \ar@/^5mm/[r]^{\Lan_f} \ar@/_5mm/[r]_{\Ran_f} 
    & 
    \ar[l]_{f^*} 
    \Rep P.  
    }
\end{equation}

Among others, the left and right Kan extensions along the canonical inclusion $\incl_Q \colon Q\hookrightarrow P$ is called the \emph{induction} and \emph{co-induction functors} and denoted by $\Ind_Q$ and $\Coind_Q$ respectively. 
Here, we recall basic properties of these functors. 

\begin{proposition}\cite{MacLane, Weibel1994} 
\label{prop:idemp_embed}
Let $Q\subseteq P$ be a full subposet. 
    \begin{enumerate}[\rm (a)]
        \item $\Ind_Q$ and $\Coind_Q$ are fully faithful functors such that $\Res_Q\circ \Ind_Q \simeq \id_{\Rep Q} \simeq \Res_Q\circ \Coind_Q $. 
        \item $\Ind_Q$ is left adjoint to $\Res_Q$, 
        and $\Coind_Q$ is right adjoint to $\Res_Q$. 
        \item $\Res_Q$ is exact, $\Ind_Q$ is right exact, and $\Coind_Q$ is left exact. 
        \item $\Ind_Q$ and $\Coind_Q$ preserve indecomposability of objects. 
        \item $\Ind_Q$ sends 
        projective modules to projective modules. 
        In particular, it restricts to the functor 
        $$\Ind_Q\colon \fprep Q \to \fprep P$$ 
        between finitely presentable persistence modules. 
    \end{enumerate}
\end{proposition}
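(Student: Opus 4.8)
The plan is to verify the five items of Proposition~\ref{prop:idemp_embed} using standard properties of Kan extensions applied to the fully faithful inclusion functor $\incl_Q\colon Q\hookrightarrow P$. Since $\incl_Q$ is fully faithful (indeed, injective on objects and a full embedding of categories), a classical fact is that the unit $\id_{\Rep Q}\to \Res_Q\circ\Ind_Q$ and the counit $\Res_Q\circ\Coind_Q\to\id_{\Rep Q}$ of the respective adjunctions are isomorphisms; this gives item (a), and also proves that $\Ind_Q$ and $\Coind_Q$ are fully faithful (a left or right adjoint to $\Res_Q$ is fully faithful iff the corresponding unit/counit is invertible). For item (b), the adjointness $\Ind_Q\dashv\Res_Q$ and $\Res_Q\dashv\Coind_Q$ is immediate from the general adjoint triple $\Lan_f\dashv f^*\dashv\Ran_f$ with $f=\incl_Q$, since $\Res_Q=\incl_Q^*$, $\Ind_Q=\Lan_{\incl_Q}$, $\Coind_Q=\Ran_{\incl_Q}$.

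For item (c), exactness of $\Res_Q$ is clear since kernels and cokernels of persistence modules are computed pointwise and restriction just forgets some of the evaluations; right exactness of $\Ind_Q$ and left exactness of $\Coind_Q$ follow because they are left resp.\ right adjoints, hence preserve colimits resp.\ limits, and in particular cokernels resp.\ kernels (alternatively, one invokes the pointwise colimit formula: $\Ind_Q M(a)=\colim(M|_{\incl_Q^{-1}(a^\downarrow)})$ and filtered-or-not colimits of vector spaces are right exact). For item (d), one combines (a) with (b): if $N\in\Rep Q$ is indecomposable, then $\End_Q(N)$ has no nontrivial idempotents, and by full faithfulness $\End_P(\Ind_Q N)\cong\End_Q(\Res_Q\Ind_Q N)\cong\End_Q(N)$, so $\Ind_Q N$ is indecomposable; the same argument with $\Coind_Q$ and the counit handles co-induction.

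Item (e) is the one requiring a small computation specific to this setting. By Proposition~\ref{prop:proj as interval}, the indecomposable projectives in $\Rep Q$ are exactly the $\mathbb{I}_{a^{\uparrow_Q}}$ for $a\in Q$, where $a^{\uparrow_Q}$ denotes the up-set of $a$ taken inside $Q$. I would show $\Ind_Q\,\mathbb{I}_{a^{\uparrow_Q}}\cong\mathbb{I}_{a^{\uparrow_P}}$ by a direct evaluation of the colimit formula: for $b\in P$ we have $\Ind_Q\,\mathbb{I}_{a^{\uparrow_Q}}(b)=\colim\big((\mathbb{I}_{a^{\uparrow_Q}})|_{\incl_Q^{-1}(b^{\downarrow})}\big)$, and $\incl_Q^{-1}(b^{\downarrow})=Q\cap b^{\downarrow}$; the restricted module is the interval module on $a^{\uparrow_Q}\cap b^{\downarrow}$ inside the poset $Q\cap b^{\downarrow}$, which is an up-set of that poset, so by Lemma~\ref{lem:colim_convex} the colimit is $k$ if $a^{\uparrow_Q}\cap b^{\downarrow}\neq\varnothing$ — equivalently $a\leq b$ in $P$ — and $0$ otherwise, with the transition maps being identities on the common support; this is precisely $\mathbb{I}_{a^{\uparrow_P}}$. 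Since $\Ind_Q$ is a left adjoint it preserves direct sums, so it sends every (finitely generated) projective to a (finitely generated) projective; combined with right exactness from (c), it therefore sends a finite presentation $\bigoplus\mathbb{I}_{b^{\uparrow_Q}}\to\bigoplus\mathbb{I}_{a^{\uparrow_Q}}\to M\to 0$ to a finite presentation of $\Ind_Q M$ in $\Rep P$, giving the restricted functor $\Ind_Q\colon\fprep Q\to\fprep P$. The main obstacle, such as it is, is the bookkeeping in the colimit computation for (e) — one must be careful that up-sets are taken in the correct poset and that $\incl_Q^{-1}(b^{\downarrow})$ may fail to be filtered, so Lemma~\ref{lem:Scolim} does not apply directly and one really needs the interval-specific Lemma~\ref{lem:colim_convex}; everything else is a formal consequence of $\incl_Q$ being a fully faithful functor between (the module categories over) posets.
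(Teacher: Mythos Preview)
Your proposal is correct. The paper does not supply its own proof of this proposition---it simply cites \cite{MacLane, Weibel1994} as standard references---so your write-up is in effect filling in the details the paper leaves to the literature, and your arguments for (a)--(d) are exactly the standard ones one would find there. Your computation for (e), showing $\Ind_Q\,\mathbb{I}_{a^{\uparrow_Q}}\cong\mathbb{I}_{a^{\uparrow_P}}$ via Lemma~\ref{lem:colim_convex}, is also correct; it is worth noting that the same computation reappears in a more general form in the paper's proof of Proposition~\ref{prop:basic Cont}(4), where the analogous statement for $\Cont_Q$ is established by the same colimit analysis.
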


We denote by $\mathcal{L}_Q$ the essential image of $\Ind_Q$ on $\fprep Q$, that is, the full subcategory of $\fprep P$ consisting of all $P$-persistence modules isomorphic to $\Ind_Q M$ for some $M\in \fprep Q$. 
Since $\Ind_Q$ is fully faithful, we have $\mathcal{L}_Q\simeq \fprep Q$.

\section{Galois connections and relevant functors}

\label{subsec:adjoint_posets}

Regarding posets as categories, an adjoint pair $f \dashv g \colon Q\rightleftarrows P$ between two posets is known as a \emph{Galois connection}. 
It consists of a pair of order-preserving maps $f\colon Q \to P$ and $g \colon P\to Q$ satisfying 
\begin{equation} \label{eq:poset adjoint}
f(y)\leq x \Longleftrightarrow y \leq g(x) \quad 
\text{for all $x \in P$, $y \in Q$}.
\end{equation}
In this case, $f$ and $g$ are determined by each other and 
given by the formulas
$f(y) = \min\{x\in P \mid y\leq g(x)\}$ and 
$g(x) = \max\{y\in Q \mid f(y)\leq x\}$. 
By \eqref{eq:poset adjoint}, they satisfy $y\leq gf(y)$ and $fg(x)\leq x$ for all $x\in P$ and $y\in Q$.
Moreover, we have $f = fgf$ and $gfg = g$. 

It is well-known (see for example \cite[Proposition 4.4.6]{riehl2017category}) that adjoint pairs between categories induces those between the functor categories, providing the next result.

\begin{proposition} \label{prop:adj_adj} 
    A Galois connection $f\dashv g \colon Q \rightleftarrows P$ induces an adjoint pair $g^*\dashv f^* \colon \Rep Q \rightleftarrows \Rep P$. 
    Furthermore, it can be extend to the adjoint quadruple $\Lan_g \dashv g^*\dashv f^* \dashv \Ran_f\colon$  
    \begin{equation} \label{eq:adj quadrupe}
    \xymatrix@C=68pt@R=38pt{
    \Rep Q 
    \ar@/^2mm/[r]^{g^*=\Lan_f} \ar@/^8mm/@{<-}[r]^{\Lan_g} & 
    \ar@/^2mm/[l]^{f^*= \Ran_g} \ar@/^8mm/@{<-}[l]^{\Ran_f} 
    \Rep P.  
    }
    \end{equation}
\end{proposition}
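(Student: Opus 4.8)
The plan is to deduce Proposition~\ref{prop:adj_adj} from the abstract fact that adjunctions between small categories lift to adjunctions between the associated functor categories, together with the identification of these lifted adjoints with pullbacks and Kan extensions. First I would recall that a Galois connection $f\dashv g\colon Q\rightleftarrows P$ is precisely an adjunction between $Q$ and $P$ viewed as categories, so by \cite[Proposition 4.4.6]{riehl2017category} (or the dual of it) precomposition induces an adjunction between the functor categories $\Rep Q=\Fun(Q,\Vect_k)$ and $\Rep P=\Fun(P,\Vect_k)$. Concretely, precomposition with $f$ gives $f^*\colon \Rep P\to\Rep Q$ and precomposition with $g$ gives $g^*\colon\Rep Q\to\Rep P$, and the claim is $g^*\dashv f^*$. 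To check this directly one can unwind the natural isomorphism: for $M\in\Rep Q$ and $N\in\Rep P$, a morphism $g^*M\to N$ is a natural transformation whose component at $x\in P$ is a map $M(g(x))\to N(x)$, while a morphism $M\to f^*N$ has component at $y\in Q$ a map $M(y)\to N(f(y))$; using the unit $y\le gf(y)$ and counit $fg(x)\le x$ of the Galois connection one builds the two mutually inverse assignments and verifies naturality. This is routine and I would not grind through it.

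Next I would identify $g^*$ with $\Lan_f$ and $f^*$ with $\Ran_g$. The key input is uniqueness of adjoints: $\Lan_f$ is by definition left adjoint to $f^*$ (stated in the excerpt), and $g^*$ is also left adjoint to $f^*$ by the previous step, hence $g^*\simeq\Lan_f$ canonically. Dually, $f^*$ is right adjoint to $g^*$, and $\Ran_g$ is by definition right adjoint to $g^*$, so $f^*\simeq\Ran_g$. Alternatively, and perhaps more transparently for the reader, I would note that when $f$ has a right adjoint $g$ the colimit defining $\Lan_f M(a)=\colim M|_{f^{-1}(a^{\downarrow})}$ simplifies: since $f\dashv g$, one has $f(y)\le a \iff y\le g(a)$, so $f^{-1}(a^{\downarrow}) = g(a)^{\downarrow}$ inside $Q$, and $g(a)^{\downarrow}$ is a filtered poset with a terminal object $g(a)$; therefore the colimit is just $M(g(a))=g^*M(a)$, and one checks the structure maps match. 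Either route works; I would present the uniqueness-of-adjoints argument as the main line and mention the colimit computation as a remark, since it also explains why $g^*$ lands in the right place.

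Finally, having established the middle adjunction $g^*\dashv f^*$ and the identifications $g^*=\Lan_f$, $f^*=\Ran_g$, the outer two adjunctions come for free: $\Lan_g\dashv g^*$ and $f^*\dashv\Ran_f$ are the defining adjunctions of the left and right Kan extensions along $g$ and $f$ respectively, already recorded in Section~\ref{subsec:Kan extension}. Splicing the three adjunctions together yields the adjoint quadruple $\Lan_g\dashv g^*\dashv f^*\dashv\Ran_f$ displayed in \eqref{eq:adj quadrupe}.

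I do not expect a serious obstacle here; the statement is essentially formal. The only mild subtlety is bookkeeping the variance and making sure the induced adjunction from \cite[Proposition 4.4.6]{riehl2017category} is applied on the correct side (the functor-category adjunction induced by $f\dashv g$ has $g^*$ as left adjoint, not $f^*$), and that the canonical isomorphisms $g^*\simeq\Lan_f$, $f^*\simeq\Ran_g$ are produced as honest natural isomorphisms compatible with the adjunction units and counits rather than just objectwise. Once that is spelled out, the proof is complete.
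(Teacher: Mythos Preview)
Your proposal is correct and follows essentially the same approach as the paper. In fact, the paper gives no proof beyond the sentence ``It is well-known (see for example \cite[Proposition 4.4.6]{riehl2017category}) that adjoint pairs between categories induces those between the functor categories, providing the next result,'' so your write-up is a faithful and more detailed unpacking of exactly that citation together with the defining adjunctions $\Lan_g\dashv g^*$ and $f^*\dashv \Ran_f$ already recorded in Section~\ref{subsec:Kan extension}.
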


\subsection{Contraction functors as left Kan extensions}
\label{subsec:contraction}

We especially focus on a class of Galois connections in which one side of the adjunction is the canonical inclusion. This class is fundamental, as it always arises from a given Galois connection $f\dashv g\colon Q\rightleftarrows P$ through the following decomposition:
\begin{equation}\label{eq:Galois_decomp}
    \xymatrix@C38pt{
    Q \ar@/^3mm/[r]^{gf} \ar@{}[r]|{\perp} &  
    \ar@/^3mm/[l]^{\incl_{\Image g}} \Image g \ar@/^3mm/[r]^{f|_{\Image g}} \ar@{}[r]|{\cong}& 
    \ar@/^3mm/[l]^{g|_{\Image f}} \Image f \ar@/^3mm/[r]^{\incl_{\Image f}} \ar@{}[r]|{\perp} & 
    \ar@/^3mm/[l]^{fg} P. 
    }
\end{equation}

This leads the following definition \cite{EKMS} (see also \cite{Everett44}).

\begin{definition} 
    Let $P$ be a poset.  
    An \emph{interior system} of $P$ is a full subposet $Q\subseteq P$ 
    whose canonical inclusion $\incl_Q\colon Q\hookrightarrow P$ has a right adjoint. 
    Defining a floor function by
    $\lfloor x \rfloor_Q := \max \{y\in Q \mid y\leq x\}$ for $x\in P$, 
    it gives a right adjoint as
    $\incl_Q \dashv \floorQ \colon Q \rightleftarrows P$ (see \eqref{eq:poset adjoint}). 

    Dually, a \emph{closure system} is defined, for which the left adjoint to the canonical inclusion is denoted as a ceil function. 
\end{definition}

We mainly study interior systems. 
Suppose that $Q$ is an interior system of $P$. 
For a subset $S\subseteq P$, 
let $\lceil S \rceil_Q = \{a \in P \mid \lfloor a \rfloor_Q \in S\}$ be the inverse image of $S$ under $\floorQ$. 
In addition, we write $\lceil y \rceil_Q = \{a\in P \mid \lfloor a \rfloor_Q = y\}$ for each $y\in Q$. 
Notice that they provide a decomposition $P = \bigsqcup_{y\in Q} \lceil y \rceil_Q$ of the base poset. 
In particular, we have $P = Q^{\uparrow}$. 
In this situation, 
one can compute the induction functor $\Ind_Q$ by using the floor function $\floorQ$ on $P$.
In fact, there exists a natural isomorphism $\Ind_Q \simeq (\floorQ)^*$, see Proposition \ref{prop:adj_adj}. 
Thus, for $M\in \Rep Q$, we have 
\begin{equation}\label{eq:Ind as floor}
    \Ind_QM(a) = M(\lfloor a \rfloor_Q) \qand 
    \Ind_QM(a\leq b) = M(\lfloor a \rfloor_Q \leq \lfloor b \rfloor_Q).
\end{equation}

Furthermore, according to Proposition \ref{prop:adj_adj}, 
$\Ind_Q$ admits the left adjoint provided by the left Kan extension along $\floorQ$. 
We refer to it as \emph{contraction functor} and write 
$\Cont_Q := \Lan_{\floorQ} \colon \Rep P \to \Rep Q$. 
In our notations, 
$\Cont_Q M$ of $M\in \Rep P$ can be written as 
\begin{equation}\label{eq:contM}
    \hspace{-1mm}
    \Cont_Q M(x) = \colim(M|_{\lceil x^{\downarrow} \rceil_Q})
    \ \text{and} \
    \Cont_Q M(x\leq y) = \left(\colim(M|_{\lceil x^{\downarrow} \rceil_Q}) \to \colim(M|_{\lceil y^{\downarrow} \rceil_Q})\right)
\end{equation}
where $x^{\downarrow}$ is taken as a subset of $Q$ and $\lceil x^{\downarrow} \rceil_Q = \{a\in P \mid \lfloor a \rfloor_Q \leq x\}$.  
One can rewrite the diagram of adjunctions \eqref{eq:adj quadrupe} as
    \begin{equation}\label{eq:idemp_quad}
    \xymatrix@C=68pt@R=38pt{
    \Rep Q 
    \ar@/^2mm/[r]^{\Ind_Q} \ar@/^8mm/@{<-}[r]^{\Cont_Q} & 
    \ar@/^2mm/[l]^{\Res_Q} \ar@/^8mm/@{<-}[l]^{\Coind_Q} 
    \Rep P.  
    }
\end{equation}

We summarize below important properties of contraction functors. 
Here, we remark that there are no structural requirements on $P$ in the next result, 
such as $P$ being an upper-semilattice or locally finite. 
In addition, we do not require $Q$ to be finite.

\begin{proposition}\label{prop:basic Cont}
    For any interior system $Q$ of a poset $P$, 
    the following statements hold.
    \begin{enumerate}[\rm (1)]
        \item We have an adjoint pair 
        \begin{equation}\label{eq:Ind-Cont}
            \Cont_Q \dashv \Ind_Q \colon \Rep P \rightleftarrows \Rep Q. 
        \end{equation}
        \item $\Cont_Q\circ \Ind_Q \simeq \id_{\Rep Q}$. 
        \item $\Cont_Q$ is right exact, and $\Ind_Q$ is exact.
        \item $\Cont_Q$ preserves projectives, and also indecomposable projectives. 
        \item The adjoint pair \eqref{eq:Ind-Cont} restricts to the adjoint pair between finitely presentable persistence modules as 
        \begin{equation} \label{eq:fp Ind-Cont}
            \Cont_Q \dashv \Ind_Q \colon \fprep P \rightleftarrows \fprep Q. 
        \end{equation}
    \end{enumerate}
\end{proposition}

\begin{proof}
    (1) This is given in \eqref{eq:idemp_quad}. 

    (2) This is because the right adjoint $\Ind_Q$ is fully faithful by Proposition \ref{prop:idemp_embed}(a).
    
    (3) We recall that $\Ind_Q$ is right exact by Proposition \ref{prop:idemp_embed}(c). 
    In addition, since $\Cont_Q\dashv \Ind_Q$ by (1), we have that $\Cont_Q$ is right exact and $\Ind_Q$ is left exact.

    (4) It follows from a basic fact in category theory (see \cite[Proposition 2.3.10]{Weibel1994} for example) that $\Cont_Q$ preserves projectives since its right adjoint $\Ind_Q$ is exact by (3).

    In the following, we prove that $\Cont_Q$ sends  indecomposable projectives to indecomposable projectives, 
    by showing that $\Cont_Q \mathbb{I}_{a^{\uparrow}} \cong \mathbb{I}_{(\lfloor a\rfloor_Q)^{\uparrow}}$ for any $a\in P$ (cf. Proposition \ref{prop:proj as interval}). 

    Let $a\in P$. 
    We only check that there is an isomorphism 
\begin{equation}\label{eq:cont_proj}
        \Cont_Q \mathbb{I}_{a^{\uparrow}}(x) 
        \cong
        \begin{cases}
        k & \text{if $\lfloor a\rfloor_Q \leq x$}, \\ 
        0 & \text{otherwise}  
        \end{cases}
    \end{equation}
    induced by universal properties of colimits 
    for any $x\in Q$. 
    To compute the left-hand side, we consider a convex subset 
    $S_x := a^{\uparrow} \cap \lceil x^{\downarrow} \rceil_Q$ of $P$. 

    Assume that $S_x$ is non-empty and take an element $b\in S_x$.
    By definition, it satisfies $a\leq b$ and $\lfloor a\rfloor_Q \leq \lfloor b\rfloor_Q \leq x$. 
    This shows that $a\in S_x$, and hence it is the minimum element of $S_x$. In particular, $S_x$ is connected in $\lceil x^{\downarrow} \rceil_Q$.
    Moreover, it is clearly an upset of $\lceil x^{\downarrow} \rceil_Q$. 
    Then, we can deduce
    \begin{equation}
    \Cont_Q \mathbb{I}_{a^{\uparrow}}(x) = \colim \mathbb{I}_{a^{\uparrow}}|_{\lceil x^{\downarrow}\rceil_Q} =
    \colim \mathbb{I}_{S_x}|_{\lceil x^{\downarrow} \rceil_Q} \overset{\rm Lem.\,\ref{lem:colim_convex}}{\cong} k   
    \end{equation} 
    as desired. 
    
    From the above argument, we find that $S_x$ is non-empty if and only if $\lfloor a \rfloor_Q \leq x$. 
    Then, we get the remaining assertion because 
    we have $\Cont_Q\mathbb{I}(x)=\colim \mathbb{I}_{a^{\uparrow}}|_{\lceil x^{\downarrow} \rceil_Q}=\colim \mathbb{I}_{S_x}|_{\lceil x^{\downarrow} \rceil_Q} = 0$ if $S_x$ is empty. 
    
    (5) It is immediate from (1), (4) and Proposition \ref{prop:idemp_embed}(e).
\end{proof}

We remark that the contraction functors have appeared in \cite[Definition 7.14]{BBH2} in the study of persistence modules over a finite product of totally ordered sets, and their contraction functor forms a special case of ours (see Example \ref{ex:aligned posets}(3) for the detail). In addition, the above Proposition \ref{prop:basic Cont}(4) is obtained in \cite[Section 7.2]{BBH2} for this class of posets.

\subsection{Contraction functors over aligned interior systems}

In this section, we introduce a special class of interior systems called \emph{aligned interior systems}, 
over which the contraction functors enjoy nice behavior. 
In fact, they play a central role in the study of interval modules in the next section.

\begin{defprop}
    \label{def:aligned intersys} \rm
    For an interior system $Q$ of $P$, 
    the following conditions (a) and (b) are equivalent.
    \begin{itemize}
        \item [\rm (a)] $\lceil y^{\downarrow} \rceil_Q = \{a\in P \mid \lfloor a \rfloor_Q \leq y\}$ is filtered for every $y\in Q$. 
        \item [\rm (b)]  $Q$ satisfies both {\rm (AL1)} and {\rm (AL2)}.
        \begin{itemize}
        \item[{\rm (AL1)}] $\lceil y^{\downarrow} \rceil_Q = (\lceil y \rceil_Q)^{\downarrow}$ holds for all $y\in Q$.
        \item[{\rm (AL2)}] $\lceil y \rceil_Q$ is filtered for every $y\in Q$.  
        \end{itemize}
    \end{itemize}
    An interior system of $P$ is said to be \emph{aligned} if it satisfies one of the above equivalent conditions.
    
\end{defprop}

\begin{proof} 
    Let $Q$ be an interior system of $P$. 
    In this case, we note that one can replace the condition (AL1) in (b) with the following equivalent condition:
    \begin{itemize}
        \item Let $x,y\in Q$ with $x\leq y$. Then, for any $a\in \lceil x \rceil_Q$, there is $b\in \lceil y \rceil_Q$ such that $a\leq b$. 
    \end{itemize}
    In fact, we check it by following discussion. 
    Firstly, we check that the above condition implies (AL1). 
    It  suffices to show that 
    $\lceil y^{\downarrow} \rceil_Q \subseteq (\lceil y \rceil_Q)^{\downarrow}$
    holds for any $y\in Q$ 
    since the converse inclusion relation always holds. 
    Let $y\in Q$ and $a\in \lceil y^{\downarrow} \rceil_Q$. 
    Since $\lfloor a \rfloor_Q \leq y$, there exists $b\in \lceil y \rceil_Q$ such that $a\leq b$ by our assumption. 
    This means $a\in (\lceil y \rceil_Q)^{\downarrow}$. 
    Conversely, we assume (AL1). 
    To check the condition, we take $x,y\in Q$ with $x\leq y$ and $a\in \lceil x \rceil_Q$. 
    By $\lfloor a \rfloor_Q = x \leq y$, we have 
    $a \in \lceil y^{\downarrow} \rceil_Q = (\lceil y \rceil_Q)^{\downarrow}$, where the latter equality is given by our assumption (AL1). 
    This shows that there is $b\in \lceil y \rceil_Q$ satisfying $a\leq b$, as desired.

    From now on, we prove that (a) and (b) are equivalent. 
    On one hand, we assume that (a) is satisfied.
    To check the condition (AL2), let $a,b\in \lceil y\rceil_Q$. 
    By (a), there is an element $c\in \lceil y^{\downarrow}\rceil_Q$ such that $a,b\leq c$. 
    In this case, we have $y = \lfloor a\rfloor_Q \leq \lfloor c \rfloor_Q \leq y$ and hence $c \in \lceil y\rceil_Q$. 
    Thus, a pair $a,b$ have an upper bound in the set $\lceil y \rceil_Q$. 
    Next, we check the condition (AL1). 
    Let $x,y\in Q$ with $x \leq y$ and $a\in \lceil x \rceil_Q$. 
    We have $a\in \lceil y^{\downarrow} \rceil_Q$ by $\lfloor a \rfloor_Q = x \leq y$. 
    Since $\lceil y^{\downarrow} \rceil_Q$ is filtered by (a), 
    there is an element $b \in \lceil y^{\downarrow} \rceil_Q$ such that $a\leq b$ and $y\leq b$. 
    Notice that $\lfloor b \rfloor_Q \leq y$ holds by $b \in \lceil y^{\downarrow} \rceil_Q$. 
    Then, applying $\floorQ$, 
    we have relations 
    $y = \lfloor y \rfloor_Q \leq \lfloor b \rfloor_Q \leq y$, 
    showing $\lfloor b \rfloor_Q = y$ and $b \in \lceil y \rceil_Q$. 
    Thus, we get the claim.
    
    On the other hand, we assume that both (AL1) and (AL2) in (b) are satisfied. 
    Let $a,b\in \lceil y^{\downarrow} \rceil_Q$ with  $x_a := \lfloor a \rfloor_Q $ and $x_b := \lfloor b \rfloor_Q$. 
    By (AL1) with $x_a,x_b \leq y$, 
    there are $c_a,c_b\in \lceil y \rceil_Q$ such that $a\leq c_a$ and $b \leq c_b$. 
    Since $\lceil y \rceil_Q$ is filtered by (AL2), we have an element $d\in \lceil y \rceil_Q \subseteq \lceil y^{\downarrow} \rceil_Q$ 
    such that $c_a\leq d$ and $c_b\leq d$. 
    This gives an upper bound of $a$ and $b$ in $\lceil y^{\downarrow} \rceil_Q$ as desired. 
\end{proof}

We have the following description of contraction functors over aligned interior systems. 

\begin{proposition}\label{prop:Cont_another}
    Let $Q$ be an aligned interior system of $P$. 
    For any $M\in \Rep P$, we have 
    \begin{equation}
        \Cont_Q M(x) = \colim(M|_{\lceil x \rceil_Q}) 
        \qand 
        \Cont_Q M(x\leq y) = \left(
        \colim(M|_{\lceil x \rceil_Q}) \to 
        \colim(M|_{\lceil y \rceil_Q})
        \right) 
    \end{equation}
    up to isomorphism, where the right-most arrow is the $k$-linear map given by the universal property of colimits. 
    Besides, a similar description holds for morphisms. 
\end{proposition}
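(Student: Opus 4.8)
The plan is to reduce the claim to Lemma~\ref{lem:Scolim} applied pointwise. Recall that by definition \eqref{eq:contM} we have $\Cont_Q M(x) = \colim(M|_{\lceil x^{\downarrow} \rceil_Q})$, where $\lceil x^{\downarrow} \rceil_Q = \{a \in P \mid \lfloor a \rfloor_Q \leq x\}$. On the other hand $\lceil x \rceil_Q = \{a \in P \mid \lfloor a \rfloor_Q = x\}$, which is a full subposet of $\lceil x^{\downarrow} \rceil_Q$. So the natural candidate isomorphism is the one induced by the universal property of colimits from the inclusion $\lceil x \rceil_Q \hookrightarrow \lceil x^{\downarrow} \rceil_Q$, namely the map $\colim(M|_{\lceil x \rceil_Q}) \to \colim(M|_{\lceil x^{\downarrow}\rceil_Q})$ of Remark~\ref{rem:colimit}. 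I would first observe that, since $Q$ is aligned, we may use characterization (b) of Definition-Proposition~\ref{def:aligned intersys}: condition (AL2) says $\lceil x \rceil_Q$ is filtered, and condition (AL1) says $\lceil x^{\downarrow} \rceil_Q = (\lceil x \rceil_Q)^{\downarrow}$, i.e., $\lceil x^{\downarrow}\rceil_Q \subseteq (\lceil x \rceil_Q)^{\downarrow}$.

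Now I would apply Lemma~\ref{lem:Scolim} with the ambient poset taken to be $P' := \lceil x^{\downarrow}\rceil_Q$ and the full subposet taken to be $S := \lceil x \rceil_Q$, and with the module $M|_{P'} \in \Rep P'$. The hypotheses of Lemma~\ref{lem:Scolim} are exactly that $S$ is filtered and $P' \subseteq S^{\downarrow}$, both of which hold by the two paragraphs above (note that $S^{\downarrow}$ here is computed inside $P'$, but since $S \subseteq P'$ and $P' \subseteq (\lceil x \rceil_Q)^{\downarrow}$ with the downset taken in $P$, and $P' = \lceil x^{\downarrow}\rceil_Q$ is itself a downset of... — one needs to check the downset taken inside $P'$ still covers $P'$, which follows because for $a \in P'$ and $b \in \lceil x \rceil_Q$ with $a \leq b$ in $P$ we automatically have $b \in P'$, so the witness lies in $P'$). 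Lemma~\ref{lem:Scolim} then gives that the induced map $\colim(M|_S) \to \colim(M|_{P'})$ is an isomorphism, which is precisely $\Cont_Q M(x) \cong \colim(M|_{\lceil x\rceil_Q})$.

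For the transition maps and for morphisms, I would argue by naturality of these induced-on-colimit maps. For $x \leq y$ in $Q$, the inclusions $\lceil x \rceil_Q \hookrightarrow \lceil x^{\downarrow}\rceil_Q \hookrightarrow \lceil y^{\downarrow}\rceil_Q$ and $\lceil x \rceil_Q, \lceil y \rceil_Q \hookrightarrow \lceil y^{\downarrow}\rceil_Q$ assemble into a commuting square of colimit maps (all arising from the same universal property), whose vertical arrows are the isomorphisms just established; the bottom arrow is $\Cont_Q M(x \leq y)$ and the top arrow is the asserted map $\colim(M|_{\lceil x\rceil_Q}) \to \colim(M|_{\lceil y \rceil_Q})$. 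Similarly, for a morphism $\varphi \colon M \to N$ in $\Rep P$, restriction commutes with forming colimits (Remark~\ref{rem:colimit}), so the family of isomorphisms $\Cont_Q M(x) \cong \colim(M|_{\lceil x\rceil_Q})$ is natural in $M$. The only genuinely delicate point is the bookkeeping in the previous paragraph: verifying that the downset hypothesis of Lemma~\ref{lem:Scolim} is met \emph{within} the ambient poset $\lceil x^{\downarrow}\rceil_Q$ rather than within $P$, and that $\lceil x^{\downarrow}\rceil_Q$ is genuinely the right ambient poset so that $M|_{\lceil x^{\downarrow}\rceil_Q}$ makes sense; everything else is a routine diagram chase using universal properties.
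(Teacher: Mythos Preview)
Your proposal is correct and follows essentially the same approach as the paper: apply Lemma~\ref{lem:Scolim} with ambient poset $\lceil x^{\downarrow}\rceil_Q$ and subposet $\lceil x \rceil_Q$, using (AL1) and (AL2) to verify its hypotheses, and then invoke naturality for the structure maps. Your parenthetical check that the downset hypothesis holds \emph{inside} $\lceil x^{\downarrow}\rceil_Q$ (rather than in $P$) is a point the paper leaves implicit, but the argument you give is exactly the right one.
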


\begin{proof}
    In the statement, the map $\colim(M|_{\lceil x \rceil_Q})\to \colim(M|_{\lceil y \rceil_Q})$ exists because $\colim M|_{\lceil y \rceil_Q}$ is a co-cone of $M|_{\lceil x \rceil_Q}$ thanks to (AL1) and (AL2). 
    Thus, they give rise to a $Q$-persistence module.
    
    From \eqref{eq:contM}, 
    it suffices to show that the induced map $\colim (M|_{\lceil x \rceil_Q}) \to \colim(M|_{\lceil x^{\downarrow} \rceil_Q})$ is an isomorphism for every $x\in P$.   
    However, we can deduce it 
    by applying Lemma \ref{lem:Scolim} to $\lceil x \rceil_Q \subseteq 
    \lceil x^{\downarrow} \rceil_Q$. 
    Indeed, they satisfy 
    $\lceil x^{\downarrow} \rceil_Q \subseteq (\lceil x \rceil_Q)^{\downarrow}$ by (AL1), 
    and $\lceil x \rceil_Q$ is filtered by (AL2). 
\end{proof}

\begin{proposition} \label{prop:cont is exact}
    Let $Q$ be an aligned interior system of $P$. 
    Then, $\Cont_Q$ is exact. 
\end{proposition}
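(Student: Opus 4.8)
The plan is to deduce the exactness of $\Cont_Q$ directly from the filtered-colimit description of Proposition \ref{prop:Cont_another}. Exactness of a functor with target $\Rep Q$ can be tested pointwise, since kernels, cokernels and images in $\Rep Q$ are formed objectwise in $\Vect_k$, and exactness is moreover invariant under natural isomorphism of functors. By Proposition \ref{prop:Cont_another}, for each $x\in Q$ the functor $\Rep P\to\Vect_k$, $M\mapsto \Cont_Q M(x)$, is naturally isomorphic to $M\mapsto \colim(M|_{\lceil x\rceil_Q})$. Hence it suffices to show, for every $x\in Q$, that $M\mapsto \colim(M|_{\lceil x\rceil_Q})$ is an exact functor $\Rep P\to\Vect_k$.

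First I would factor this functor as the restriction $\Res_{\lceil x\rceil_Q}\colon \Rep P\to \Rep(\lceil x\rceil_Q)$, which is exact by Proposition \ref{prop:idemp_embed}(c) since $\lceil x\rceil_Q$ is a full subposet of $P$, followed by the colimit functor $\colim\colon \Rep(\lceil x\rceil_Q)\to \Vect_k$. Thus the problem reduces to exactness of this colimit functor. Now $\lceil x\rceil_Q$ is nonempty, because $x\in\lceil x\rceil_Q$ (as $\lfloor x\rfloor_Q = x$), and it is filtered by condition (AL2) of Definition-Proposition \ref{def:aligned intersys}; so, viewed as a small category, $\lceil x\rceil_Q$ is a filtered category. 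The colimit functor indexed by a filtered category is exact on $\Vect_k$-valued diagrams, by the standard fact that in $\Vect_k$ filtered colimits commute with finite limits (equivalently, $\Vect_k$ satisfies Grothendieck's axiom (AB5)); in particular it preserves kernels as well as cokernels. Being a composite of two exact functors, $M\mapsto\colim(M|_{\lceil x\rceil_Q})$ is exact, which proves the proposition.

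I do not anticipate a genuine obstacle. The only step that needs care is the appeal to exactness of filtered colimits of vector spaces, together with the observation that (AL2) is exactly what is needed to make each indexing poset $\lceil x\rceil_Q$ a genuine filtered category (nonemptiness coming for free from $x\in\lceil x\rceil_Q$). If one prefers not to invoke axiom (AB5) directly, one may instead combine the right-exactness of $\Cont_Q$ already recorded in Proposition \ref{prop:basic Cont}(3) with the elementary fact that a filtered colimit of injective $k$-linear maps is again injective, which upgrades right-exactness to exactness.
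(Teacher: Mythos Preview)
Your proof is correct and takes essentially the same approach as the paper: both rely on the filtered-colimit description from Proposition~\ref{prop:Cont_another} together with the fact that filtered colimits in $\Vect_k$ commute with finite limits. The paper's version is more terse---it cites right exactness from Proposition~\ref{prop:basic Cont}(3) and then invokes the commutation of filtered colimits with kernels---which is exactly the alternative you sketch in your final paragraph.
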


\begin{proof}
    The right exactness of $\Cont_Q$ is given in Proposition \ref{prop:basic Cont}. 
    On the other hand, the left exactness follows from the facts that filtered colimits commute with finite limits, and kernels are types of finite limits. 
\end{proof}

\begin{example}\label{ex:aligned posets}
    \begin{enumerate}
        \item Trivially, $P$ itself is always an aligned interior system of a poset $P$. In particular, $P$ admits at least one aligned interior systems. 

        \item The subset $\mathbb{Z}\subseteq \mathbb{R}$ is an aligned interior system of $\mathbb{R}$ with respect to the usual order. 
        Indeed, the corresponding floor function is given by mapping an real number $r$ to the greatest integer less than or equals to $r$. 
        Thus, we have $\lceil x \rceil_{\mathbb{Z}} = [x,x+1)$ for any integer $x\in \mathbb{Z}$. 
        
        \item Let $R=R_1\times \cdots \times R_m$ be a finite product of totally ordered sets equipped with the usual product order (e.g. $R=\mathbb{R}^d$). 
        We recall from \cite{BBH2} that a \emph{finite aligned subgrid} of $R$ is a subset $Q$ of the form $T_1\times \cdots \times T_m \subseteq R$ where $T_i\subseteq R_i$ is a finite subset for every $i$. 
        Following from \cite[Lemma 7.6]{BBH2}, $Q$ is an aligned interior system of $Q^{\uparrow}$. 
        Furthermore, by Proposition \ref{prop:Cont_another}, 
        our $\Cont_Q$ is exactly 
        their contraction functor defined in \cite[Definition 7.14]{BBH2}. 
        In this setting, Proposition \ref{prop:cont is exact} corresponds to \cite[Lemma 7.17]{BBH2}. 

        Similarly, there are situations where a given full subposet $Q\subseteq P$ is not an (aligned) interior system of $P$ but it is an (aligned) interior system of $Q^{\uparrow}$. 
        
        \item Consider a poset $P$ whose Hasse diagram is given as follows. 
        \begin{equation*}    
        \xymatrix@C17pt@R13pt{
        &  y&   & \\  
        x \ar[ur] &  & v \ar[ul] & \\  
        & u \ar[ur] \ar[ul] & & d \ar[ul] & \\ 
        &b \ar[u] \ar[urr] & & c \ar[ull] \ar[u] \\ 
        && a \ar[ul] \ar[ur] &  
        }   
        \end{equation*}
        We take $Q:=\{a,u,x\}$. 
        Then, it is an aligned interior system of $P$ with satisfying 
        $\lceil a \rceil_Q = \{a,b,c,d\}$,  $\lceil u \rceil_Q = \{u,v\}$, and
        $\lceil x \rceil_Q = \{x,y\}$. 

        On the other hand, if we take $Q'$ as a full subposet $Q':= P \setminus \{x\}$ of $P$, 
        then it is an interior system of $P$ with $\lfloor x \rfloor_{Q'} = u$. 
        However, it is not aligned because 
        $\lceil v^{\downarrow} \rceil_{Q'} = P\setminus \{y\}$ is not filtered. 
        We also have examples in Remark \ref{rem:preservation_intervals} for interior systems which are not aligned. 
    \end{enumerate}
\end{example}

\begin{remark}\label{rem:cont finite}
    Suppose that $P$ is a finite poset. 
    In this situation, the contraction functor $\Cont_Q$ with respect to an aligned interior system $Q$ can be understood as a certain restriction functor. 
    Indeed, by (AL2), every $\lceil y \rceil_Q$ with $y\in Q$ admits the maximum element, say $\nu(y)$. 
    Then, the correspondence $y\mapsto \nu(y)$ gives an isomorphism between $Q$ to its image $\nu(Q) = \{\nu(y) \mid y\in Q\}$ by (AL1). 
    By the definition of colimits, $\Cont_Q\simeq \nu^{*}\circ \Res_{\nu(Q)}$ holds as functors. 
    That is, $\Cont_QM$ for $M\in \Rep P$ is provided by 
    \begin{equation}
        \Cont_QM(y) = M(\nu(y))
        \qand 
        \Cont_QM(x\leq y) = M(\nu(x) \leq \nu(y))
    \end{equation}
    up to isomorphisms.
    
    The contraction functor appears to have interesting properties especially for infinite posets, 
    but it also brings consequences even when $P$ is finite, see Sections \ref{sec:stabilizing} and \ref{sec:computing intresdim}. 
\end{remark}

\section{Induction and contraction for interval modules}

In this section, we consider to apply the induction and contraction functors to the class of interval modules. 
In particular, we prove that both functors preserve interval-decomposability of persistence modules when we consider aligned interior systems (Proposition \ref{prop:interval-Ind-Cont}). 
This contrasts with a known fact (see \cite[Example 4.9]{AET} for example) that the induction functor with respect to full subposet inclusions may send an interval module to a non-interval-decomposable module in general. 
After that, we use these functors over aligned interior systems to study interval covers and resolutions (Section \ref{sec:Ind-Cont resolusion}).

\subsection{Interval modules and aligned interior systems}

We begin with the following observations. 
Let $P$ be an arbitrary poset.

\begin{lemma}\label{lem:ceil interval}
    Let $Q$ be an interior system of $P$. 
    For a given subset $S\subseteq Q$, we have the following. 
    \begin{enumerate}[\rm (1)]
        \item $S$ is convex in $Q$ if and only if $\lceil S \rceil_Q$ is convex in $P$. 
        \item $S$ is connected in $Q$ if and only if $\lceil S \rceil_Q$ is connected in $P$. 
        \item $S$ is an interval in $Q$ if and only $\lceil S \rceil_Q$ is an interval in $P$. 
    \end{enumerate}
\end{lemma}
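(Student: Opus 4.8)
The plan is to establish (1) and (2) by direct arguments and then deduce (3) formally, since an interval is by definition a convex connected subset. Throughout I will rely on three elementary properties of the adjunction $\incl_Q \dashv \floorQ$: the floor function $\floorQ \colon P \to Q$ is order-preserving (it is a right adjoint); one has $\lfloor a \rfloor_Q \le a$ for every $a \in P$; and $\lfloor y \rfloor_Q = y$ for every $y \in Q$. The last two observations yield in particular the inclusion $S \subseteq \lceil S \rceil_Q$ for any $S \subseteq Q$, which will be used repeatedly.

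For (1) I would argue both implications by transporting the defining inequalities through $\floorQ$. For the forward direction, given $a \le b \le c$ in $P$ with $a, c \in \lceil S \rceil_Q$, applying $\floorQ$ gives $\lfloor a \rfloor_Q \le \lfloor b \rfloor_Q \le \lfloor c \rfloor_Q$ in $Q$; since $\lfloor a \rfloor_Q, \lfloor c \rfloor_Q \in S$ and $S$ is convex in $Q$, convexity forces $\lfloor b \rfloor_Q \in S$, i.e.\ $b \in \lceil S \rceil_Q$. Conversely, given $x \le z \le y$ in $Q$ with $x, y \in S$, we have $x, y \in \lceil S \rceil_Q$, so convexity of $\lceil S \rceil_Q$ in $P$ gives $z \in \lceil S \rceil_Q$; but $z \in Q$, so $\lfloor z \rfloor_Q = z$, whence $z \in S$.

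For (2) the ($\Leftarrow$) direction is again obtained by pushing a path forward: any path from $x$ to $y$ inside $\lceil S \rceil_Q$ maps under $\floorQ$ to a path from $x$ to $y$ inside $S$, with consecutive terms still comparable because a monotone map carries comparable pairs to comparable pairs. For ($\Rightarrow$), given $a, b \in \lceil S \rceil_Q$, I first use the comparabilities $\lfloor a \rfloor_Q \le a$ and $\lfloor b \rfloor_Q \le b$ to join $a$ to $\lfloor a \rfloor_Q$ and $b$ to $\lfloor b \rfloor_Q$ within $\lceil S \rceil_Q$ (all four points lie there since $\lfloor a \rfloor_Q, \lfloor b \rfloor_Q \in S \subseteq \lceil S \rceil_Q$), then join $\lfloor a \rfloor_Q$ to $\lfloor b \rfloor_Q$ using a path inside $S$ provided by connectedness of $S$ in $Q$, and concatenate. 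The degenerate case $S = \emptyset$ makes $\lceil S \rceil_Q = \emptyset$, and connectedness holds vacuously on both sides.

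Finally (3) follows immediately by combining (1) and (2): $S$ is an interval in $Q$ iff it is convex and connected in $Q$ iff $\lceil S \rceil_Q$ is convex and connected in $P$ iff $\lceil S \rceil_Q$ is an interval in $P$. I do not expect a genuine obstacle here; the only step requiring a little care is the ($\Rightarrow$) direction of (2), namely the gluing of the short segments from $a$ to $\lfloor a \rfloor_Q$ and from $\lfloor b \rfloor_Q$ to $b$ onto a path living in $S$, together with the systematic use of the monotonicity of $\floorQ$ to transport comparability in either direction.
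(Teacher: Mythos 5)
Your argument is correct and follows the same route as the paper's proof: part (1) is the fact that preimages of convex sets under order-preserving maps (applied to $\floorQ$ and to $\incl_Q$) are convex, part (2) pushes paths through $\floorQ$ in one direction and glues the segments $a \geq \lfloor a\rfloor_Q$ and $b \geq \lfloor b\rfloor_Q$ onto a path in $S$ in the other, and (3) is formal from (1) and (2). You merely spell out explicitly what the paper invokes as known facts about monotone maps; the content is identical.
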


\begin{proof}
    (1) We recall a fact that the inverse image of a convex set under an order-preserving map is again convex. 
    In our situation, $\lceil S \rceil_Q$ (resp., $S$) is the inverse image of $S$ (resp., $\lceil S \rceil_Q$) by $\floorQ$ (resp., $\incl_Q$) by definition. 
    
    (2) We observe that the image of a connected set under an order-preserving map is again connected. 
    Thus, $S$ is connected whenever $\lceil S \rceil_Q$ is connected. 
    Conversely, if $S$ is connected, then for two elements $a,b\in \lceil S \rceil_Q$, 
    we have a sequence $a \geq \lfloor a \rfloor_Q = z_0,z_1,\ldots, z_m = \lfloor b \rfloor_Q \leq b$ with $z_i\in S\subseteq \lceil S \rceil_Q$ such that $z_{i-1}$ and $z_i$ are comparable for each $i\in \{1,\ldots,m\}$. 
    It implies that $\lceil S \rceil_Q$ is connected. 

    (3) It is immediate from (1) and (2).
\end{proof}

\begin{lemma}\label{lem:cont interval}
    Suppose that $Q$ is an aligned interior system of $P$. 
    For an interval $T\subseteq P$, let 
    
\begin{equation}\label{eq:tbar}
        \overline{T}^Q := \{x\in Q \mid 
        \text{$T\cap \lceil x \rceil_Q$ is non-empty and an upset of $\lceil x \rceil_Q$}
        \}.
    \end{equation}

    Then, $\overline{T}^Q$ is convex in $Q$. 
\end{lemma}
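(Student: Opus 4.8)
The plan is to check the definition of convexity directly. So I take $x, z \in \overline{T}^Q$ and $y \in Q$ with $x \leq y \leq z$, and I aim to show that $T \cap \lceil y \rceil_Q$ is both non-empty and an upset of $\lceil y \rceil_Q$, i.e.\ $y \in \overline{T}^Q$. The two tools I will lean on are the reformulation of (AL1) proved in Definition-Proposition~\ref{def:aligned intersys} --- whenever $x' \leq y'$ in $Q$ and $a \in \lceil x' \rceil_Q$, there is $b \in \lceil y' \rceil_Q$ with $a \leq b$ --- and condition (AL2) that every fiber $\lceil y' \rceil_Q$ is filtered; together with the fact that $T$, being an interval, is convex in $P$.

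For non-emptiness I would argue as follows. Pick $a \in T \cap \lceil x \rceil_Q$ and $c \in T \cap \lceil z \rceil_Q$, which exist because $x, z \in \overline{T}^Q$. Applying (AL1) to $x \leq y$ gives $b \in \lceil y \rceil_Q$ with $a \leq b$, and applying it again to $y \leq z$ gives $d \in \lceil z \rceil_Q$ with $b \leq d$. By (AL2), $c$ and $d$ have a common upper bound $e \in \lceil z \rceil_Q$; since $T \cap \lceil z \rceil_Q$ is an upset of $\lceil z \rceil_Q$ and $c \leq e$, we get $e \in T$. Now $a \leq b \leq d \leq e$ with $a, e \in T$, so convexity of $T$ forces $b \in T$; hence $b \in T \cap \lceil y \rceil_Q$, which is therefore non-empty.

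For the upset condition I would take $p \in T \cap \lceil y \rceil_Q$ and $q \in \lceil y \rceil_Q$ with $p \leq q$, and show $q \in T$. Fixing $c \in T \cap \lceil z \rceil_Q$, I apply (AL1) to $y \leq z$ to get $r \in \lceil z \rceil_Q$ with $q \leq r$, then use (AL2) to find a common upper bound $s \in \lceil z \rceil_Q$ of $c$ and $r$; since $T \cap \lceil z \rceil_Q$ is an upset and $c \leq s$, we have $s \in T$. Then $p \leq q \leq r \leq s$ with $p, s \in T$, so $q \in T$ by convexity, and thus $q \in T \cap \lceil y \rceil_Q$. This shows $T \cap \lceil y \rceil_Q$ is an upset of $\lceil y \rceil_Q$, finishing the proof.

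I expect the only real subtlety --- and the reason $x$ and $z$ play genuinely different roles --- to be the ``routing'' of elements between fibers: (AL1) transports elements only upward, from a lower fiber to a higher one, so to produce an element of $\lceil y \rceil_Q$ lying in $T$ one must push up from $T \cap \lceil x \rceil_Q$, whereas to absorb the auxiliary upper bounds back into $T$ one must push up into $\lceil z \rceil_Q$ and exploit its upset property. Once this bookkeeping is in place, filteredness and convexity of $T$ do everything else, and notably no structural hypothesis on $P$ enters.
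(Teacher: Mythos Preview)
Your proof is correct and follows essentially the same route as the paper's: both push an element of $T$ from the lower fiber $\lceil x\rceil_Q$ through $\lceil y\rceil_Q$ into $\lceil z\rceil_Q$ via (AL1), land on an element of $T$ there, and invoke convexity of $T$ along the resulting chain. The only cosmetic difference is that the paper first reformulates $x\in\overline{T}^Q$ as the cofinality condition $\lceil x\rceil_Q\subseteq (T\cap\lceil x\rceil_Q)^{\downarrow}$ (absorbing the use of (AL2) once), whereas you keep (AL2) inline in each step; the underlying argument is the same.
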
 

\begin{proof}
    Let $T\subseteq P$ be an interval, and let $\overline{T}^Q$ be a subset of $Q$ as in the statement. 
    We first show that an element $x\in Q$ belongs to $\overline{T}^Q$ 
    if and only if $\lceil x \rceil_Q  \subseteq (T\cap\lceil x \rceil_Q)^{\downarrow}$ holds
    (i.e., any $r\in \lceil x \rceil_Q$ admits $p\in T\cap \lceil x \rceil_Q$ satisfying $r\leq p$) in the following discussion. 
    On one hand, let $x\in \overline{T}^Q$ and $t\in T\cap \lceil x \rceil_Q$. For any $a\in \lceil x \rceil_Q$, by (AL2), we have an element $q \in \lceil x \rceil_Q$ such that $a\leq q$ and $t\leq q$. 
    In this case, we have $q\in T$ since $T\cap \lceil x \rceil_Q$ is an upset of $\lceil x \rceil_Q$ by our assumption. 
    This shows that $a\in (T\cap \lceil x \rceil_Q)^{\downarrow}$. 
    On the other hand, let $x\in Q$ be an element such that $\lceil x \rceil_Q \subseteq (T\cap \lceil x \rceil_Q)^{\downarrow}$.
    In this case, $T\cap \lceil x \rceil_Q$ is non-empty. 
    Moreover, if we take $a\in T\cap \lceil x \rceil_Q$ and $b\in \lceil x \rceil_Q$ with $a \leq b$, then 
    there exists $c\in T\cap \lceil x \rceil_Q$ such that $b\leq c$ by our choice of $x$. 
    Thus, we have $a\leq b \leq c $ and $b\in T$ by the convexity of $T$, as desired. 

    Now, we prove the claim. 
    If $\overline{T}^Q$ is empty, then it is convex. 
    So we will consider the other case. 
    Let $x,y\in \overline{T}^Q$ and $z\in Q$ such that $x\leq z \leq y$. 
    We need to show that $z\in \overline{T}^Q$. 
    To do this, we first show that $T \cap \lceil z \rceil_Q$ is non-empty. 
    Take any element $t\in T\cap \lceil x \rceil_Q$. 
    By (AL1) with $x\leq z$, 
    there exists $s \in \lceil z \rceil_Q$ such that $t\leq s$. 
    Similarly, by $z\leq y$, there exists $r\in \lceil y \rceil_Q$ such that $s\leq r$. 
    Since $y\in \overline{T}^Q$, there is an element $p\in T\cap \lceil y \rceil_Q$ such that $r \leq p$ by definition. 
    Combining them, we obtain $t \leq s \leq r \leq p$ with $t,p\in T$. 
    This shows $s,r\in T$ by the convexity. 
    Consequently, we have $s \in T\cap \lceil z \rceil_Q\neq \emptyset$. 

    Next, we show that $T\cap \lceil z \rceil_Q$ is an upset of $\lceil z \rceil_Q$. 
    We take $s\in T\cap \lceil z \rceil_Q$ and $q\in \lceil z \rceil_Q$ such that $s\leq q$.   
    By (AL1) with $z\leq y$, 
    there is $r\in \lceil y \rceil_Q$ such that $q\leq r$.
    In addition, by $y\in \overline{T}^Q$, 
    there exists $p\in T\cap \lceil y \rceil_Q$ such that $r\leq p$. 
    Then, we obtain $s \leq q \leq r \leq p$ with $s,p\in T$. 
    By the convexity of $T$, we have $q\in T$ (and $r\in T$). 
    It finishes the proof.
\end{proof}

We notice that the above $\overline{T}^Q$ is not necessarily connected as in the next example. 
\begin{example}
We consider a poset $P$ whose Hasse diagram is given by  
    \begin{equation*}    
    \xymatrix@C10pt@R10pt{
    & z' & \\  
    & z \ar[u] & \\ 
    x \ar[ur] && y. \ar[ul]
    }
    \end{equation*}
In this case, $Q := \{x,y,z\}$ is an aligned interior system of $P$ with $\lfloor z'\rfloor_Q = z$. 
Letting $T = Q$, this is an interval of $P$ and such that $\overline{T}^Q = \{x,y\}$. 
In fact, we have $z\not\in \overline{T}^Q$ since $z'\not \in T$. 
Thus, $\overline{T}^Q$ is convex but not connected. 
\end{example}

Our result is the following. 

\begin{proposition}\label{prop:interval-Ind-Cont}
    Let $Q$ be an interior system of $P$. Then, the following hold. 
    \begin{enumerate}[\rm (1)]
        \item $\Ind_Q$ sends interval modules to interval modules. 
        In fact, for any interval $S\subseteq Q$, we have $\Ind_Q \mathbb{I}_S \cong \mathbb{I}_{\lceil S \rceil_Q}$.
        \item If moreover $Q$ is aligned, 
        then $\Cont_Q$ sends interval modules to interval-decomposable modules. 
        In fact, for any interval $T\subseteq P$, we have 
        $\Cont_Q \mathbb{I}_T \cong \mathbb{I}_{\overline{T}^Q}$.
    \end{enumerate}
    Therefore, if $Q$ is an aligned interior system of $P$,
    then the adjoint pair \eqref{eq:Ind-Cont} restricts to 
    an adjoint pair between interval-decomposable modules. 
\end{proposition}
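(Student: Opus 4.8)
The plan is to establish parts (1) and (2) separately by explicitly computing the relevant Kan extensions on interval modules, and then deduce the final statement about the adjoint pair restricting to interval-decomposable modules as a formal consequence.

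For part (1), I would use the description \eqref{eq:Ind as floor} of $\Ind_Q$ as the pullback functor $(\floorQ)^*$. Given an interval $S\subseteq Q$, for $a\in P$ we have $\Ind_Q\mathbb{I}_S(a) = \mathbb{I}_S(\lfloor a\rfloor_Q)$, which is $k$ precisely when $\lfloor a\rfloor_Q\in S$, i.e. when $a\in\lceil S\rceil_Q$, and is $0$ otherwise; similarly the structure maps $\Ind_Q\mathbb{I}_S(a\leq b) = \mathbb{I}_S(\lfloor a\rfloor_Q\leq\lfloor b\rfloor_Q)$ are identities exactly when both $a,b\in\lceil S\rceil_Q$. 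Comparing with \eqref{eq:def_interval}, this is exactly the module $\mathbb{I}_{\lceil S\rceil_Q}$, so $\Ind_Q\mathbb{I}_S\cong\mathbb{I}_{\lceil S\rceil_Q}$ on the nose. By Lemma~\ref{lem:ceil interval}(3), $\lceil S\rceil_Q$ is an interval of $P$ whenever $S$ is an interval of $Q$, so $\Ind_Q$ indeed sends interval modules to interval modules.

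For part (2), I would use the alternative description of $\Cont_Q$ from Proposition~\ref{prop:Cont_another}, valid since $Q$ is aligned: $\Cont_Q\mathbb{I}_T(x) = \colim(\mathbb{I}_T|_{\lceil x\rceil_Q})$. The restriction $\mathbb{I}_T|_{\lceil x\rceil_Q}$ is the interval module on the convex set $T\cap\lceil x\rceil_Q$ inside the poset $\lceil x\rceil_Q$. The key point is that $\lceil x\rceil_Q$ is filtered by (AL2), so it behaves like a "directed" index set: one expects $\colim(\mathbb{I}_{T\cap\lceil x\rceil_Q})$ over a filtered poset to be one-dimensional exactly when $T\cap\lceil x\rceil_Q$ is non-empty and cofinal (an upset) in $\lceil x\rceil_Q$, and zero otherwise — essentially Lemma~\ref{lem:colim_convex} applied inside $\lceil x\rceil_Q$, possibly after first checking that a non-empty convex subset of a filtered poset that fails to be an upset has vanishing colimit, which should follow because the structure maps into "later" elements (which exist by filteredness) are eventually zero. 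This shows $\Cont_Q\mathbb{I}_T(x)\cong k$ iff $x\in\overline{T}^Q$ and $0$ otherwise, matching $\mathbb{I}_{\overline{T}^Q}$ pointwise. For the structure maps, I would verify that the induced map $\Cont_Q\mathbb{I}_T(x\leq y)$ is an isomorphism whenever $x,y\in\overline{T}^Q$: using (AL1) one can lift a witness $t\in T\cap\lceil x\rceil_Q$ to $\lceil y\rceil_Q$ and track the generators of the two colimits compatibly. Since $\overline{T}^Q$ is convex in $Q$ by Lemma~\ref{lem:cont interval}, it is a disjoint union of intervals of $Q$, so $\mathbb{I}_{\overline{T}^Q}$ is interval-decomposable; hence $\Cont_Q\mathbb{I}_T\cong\mathbb{I}_{\overline{T}^Q}$ is interval-decomposable.

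Finally, for the concluding statement: by Proposition~\ref{prop:basic Cont}(5) we already have the adjoint pair $\Cont_Q\dashv\Ind_Q\colon\fprep P\rightleftarrows\fprep Q$. An interval-decomposable finitely presentable module is a finite direct sum of interval modules, and both $\Ind_Q$ and $\Cont_Q$ are additive, so parts (1) and (2) give that $\Ind_Q$ and $\Cont_Q$ send finite direct sums of interval modules to finite direct sums of interval (resp. interval-decomposable) modules — hence they restrict to functors between the subcategories $\mathscr{I}_Q$ and $\mathscr{I}_P$, and the adjunction restricts accordingly. The main obstacle I anticipate is the colimit computation in part (2): carefully justifying that over a filtered poset $\lceil x\rceil_Q$ the colimit of the interval module on a convex subset is $k$ exactly in the "upset" case and $0$ otherwise, and then checking compatibility of these generators across the maps $\Cont_Q\mathbb{I}_T(x\leq y)$ so that one genuinely recovers the interval module $\mathbb{I}_{\overline{T}^Q}$ and not merely a module with the right dimension vector.
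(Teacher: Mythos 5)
Your proposal matches the paper's proof in all essentials: part (1) via $\Ind_Q\simeq(\floorQ)^*$ and Lemma~\ref{lem:ceil interval}, part (2) via the filtered-colimit description of Proposition~\ref{prop:Cont_another} together with Lemma~\ref{lem:colim_convex} applied inside $\lceil x\rceil_Q$, and the concluding adjunction statement as a formal consequence of additivity. The subtlety you flag — that $T\cap\lceil x\rceil_Q$ is only convex a priori, so one should check that in the filtered poset $\lceil x\rceil_Q$ an upset is automatically connected (hence an interval, so Lemma~\ref{lem:colim_convex} applies literally) and a non-upset has vanishing colimit — is real and the paper elides it, but it is exactly as easy to close as you expect.
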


\begin{proof}
(1) It follows from a natural isomorphism $\Ind_Q \simeq (\floorQ)^{*}$. 

(2) Let $T\subseteq P$ be an interval and $\overline{T}^Q$ the convex subset of $Q$ given in \eqref{eq:tbar}. 
For $x\in Q$, we have 
\begin{equation}
    \Cont_Q \mathbb{I}_T(x) \overset{\rm Prop. \ref{prop:Cont_another}}{=} \colim (\mathbb{I}_T|_{\lceil x \rceil_Q}) = \colim (\mathbb{I}_{T\cap \lceil x \rceil_Q}|_{\lceil x \rceil_Q}) .  
\end{equation}

We recall that $x\in \overline{T}^Q$ if and only if $T\cap \lceil x \rceil_Q$ is non-empty and an upset of $\lceil x \rceil_Q$. 
Following from Lemma \ref{lem:colim_convex}, we find that the induced map $\colim (\mathbb{I}_{T\cap \lceil x \rceil_Q}|_{T\cap \lceil x \rceil_Q}) \xrightarrow{\sim} \colim (\mathbb{I}_{T\cap \lceil x \rceil_Q}|_{\lceil x \rceil_Q})$ is an isomorphism between one-dimensional $k$-vector spaces if $x\in \overline{T}^Q$, and $\colim (\mathbb{I}_{T\cap \lceil x \rceil_Q}|_{\lceil x \rceil_Q}) = 0$ otherwise. This shows the desired isomorphism $\Cont_Q \mathbb{I}_T\cong \mathbb{I}_{\overline{T}^Q}$ of $Q$-persistence modules. 

The latter assertion follows from (1) and (2).
\end{proof}

In addition, we focus on the subclasses of upset and single-source interval modules.
Here, an interval module is said to be 
\emph{upset} (resp., \emph{single-source}) 
if its associated interval is an upset (resp., has the  minimum). 
For our convenience, we regard the zero object, which is an interval module associated to the empty set, as both upset and single-source interval module. 

The next result is a direct consequence of Proposition \ref{prop:interval-Ind-Cont}(1). 

\begin{proposition}
    Suppose that $Q$ is an interior system of $P$.
    For any interval $S$ of $Q$, we have the following.
    \begin{enumerate}[\rm (1)]
        \item $S$ is an upset of $Q$ if and only if $\lceil S \rceil_Q$ is an upset of $P$.
        \item If $S$ has the minimum $a$, then so does $\lceil S \rceil_Q$, and vice versa. 
    \end{enumerate}
    In particular, $\Ind_Q$ sends an upset (resp., single-source) interval modules to upset (resp., single-source) interval modules. 
\end{proposition}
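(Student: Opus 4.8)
The plan is to prove the two equivalences (1) and (2) for an interior system $Q\subseteq P$, and then read off the consequence for the functor $\Ind_Q$ from Proposition~\ref{prop:interval-Ind-Cont}(1), which already identifies $\Ind_Q\mathbb{I}_S$ with $\mathbb{I}_{\lceil S\rceil_Q}$. So the whole statement reduces to the combinatorial claim that the operation $S\mapsto \lceil S\rceil_Q$ preserves and reflects the property of being an upset, and preserves and reflects the existence of a minimum (indeed carries the minimum to the minimum).

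For part (1), I would argue as follows. Suppose $S$ is an upset of $Q$. Take $a\in\lceil S\rceil_Q$ and $b\in P$ with $a\le b$; then $\lfloor a\rfloor_Q\le\lfloor b\rfloor_Q$ since $\floorQ$ is order-preserving, and $\lfloor a\rfloor_Q\in S$ by definition of $\lceil S\rceil_Q$, so $\lfloor b\rfloor_Q\in S$ because $S$ is an upset, i.e.\ $b\in\lceil S\rceil_Q$. Conversely, suppose $\lceil S\rceil_Q$ is an upset of $P$. Take $x\in S$ and $y\in Q$ with $x\le y$. Since $x\in Q=\lceil x\rceil_Q^{?}$ — more precisely $x=\lfloor x\rfloor_Q$ (recall $\floorQ\circ\incl_Q=\id_Q$), we get $x\in\lceil S\rceil_Q$; then $y\in\lceil S\rceil_Q$ as the latter is an upset, so $\lfloor y\rfloor_Q\in S$; but $y\in Q$ gives $\lfloor y\rfloor_Q=y$, hence $y\in S$. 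This shows $S$ is an upset of $Q$.

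For part (2), suppose $S$ has minimum $a$. I claim $a$ is the minimum of $\lceil S\rceil_Q$. First $a\in\lceil S\rceil_Q$ since $\lfloor a\rfloor_Q=a\in S$. Next, for any $b\in\lceil S\rceil_Q$ we have $\lfloor b\rfloor_Q\in S$, so $a\le\lfloor b\rfloor_Q$ by minimality of $a$ in $S$; combined with $\lfloor b\rfloor_Q\le b$ (the counit of the adjunction $\incl_Q\dashv\floorQ$), we get $a\le b$. Hence $a=\min\lceil S\rceil_Q$. Conversely, if $\lceil S\rceil_Q$ has a minimum, call it $c$; since $S\subseteq\lceil S\rceil_Q$ and $S\ne\emptyset$ we would like $c\in S$, and indeed $c\le s$ for every $s\in S$, and applying $\floorQ$ (order-preserving) gives $\lfloor c\rfloor_Q\le s$ for all $s\in S$ with $\lfloor c\rfloor_Q\in S$ (as $c\in\lceil S\rceil_Q$), so $\lfloor c\rfloor_Q$ is a lower bound of $S$ lying in $S$, i.e.\ $\lfloor c\rfloor_Q=\min S$. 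The final ``in particular'' clause is then immediate: by Proposition~\ref{prop:interval-Ind-Cont}(1), $\Ind_Q\mathbb{I}_S\cong\mathbb{I}_{\lceil S\rceil_Q}$, so if $S$ is an upset interval then $\lceil S\rceil_Q$ is an upset interval by (1) (using Lemma~\ref{lem:ceil interval}(3) for the interval part), and likewise for single-source intervals by (2), with the convention that the empty set (zero module) counts as both.

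The main obstacle here is minimal — the content is purely formal manipulation of the Galois-connection identities $\lfloor\incl_Q(x)\rfloor_Q=x$ and $\incl_Q(\lfloor a\rfloor_Q)\le a$, together with monotonicity of $\floorQ$. The only point requiring a little care is ensuring the ``reflecting'' directions genuinely land back in $Q$ (which is why one invokes $\lfloor y\rfloor_Q=y$ for $y\in Q$), and handling the degenerate empty-set case consistently with the stated convention; neither is a real difficulty.
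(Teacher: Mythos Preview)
Your proof is correct and follows the same approach as the paper. The paper simply declares (1) and (2) ``clear'' and invokes Proposition~\ref{prop:interval-Ind-Cont}(1) for the final assertion; you have supplied the routine Galois-connection verifications that the paper omits, using exactly the identities $\lfloor x\rfloor_Q=x$ for $x\in Q$ and $\lfloor b\rfloor_Q\le b$ together with monotonicity of $\floorQ$.
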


\begin{proof}
    The claims (1) and (2) are clear. The latter assertion follows from Proposition \ref{prop:interval-Ind-Cont}(1).
\end{proof}

If moreover $Q$ is aligned, 
then we prove that a similar result holds for contraction functors. 
This enables us to restrict our adjoint pair to 
the subclasses of upset and single-source interval modules too (Corollary \ref{cor:adj_upset/ss}).

\begin{proposition}\label{prop:Tbar_upset}
    Let $Q$ be an aligned interior system of $P$.
    For an interval $T$ of $P$, the following hold.
    \begin{enumerate}[\rm (1)]
        \item If $T$ is an upset of $P$, 
        then $\overline{T}^Q = \lfloor T \rfloor_Q$ and it is an upset of $Q$. 
        \item If $T$ has the minimum $a$, 
        then $\overline{T}^Q$ has the minimum $\lfloor a \rfloor_Q$ whenever $\overline{T}^Q$ is non-empty.
    \end{enumerate}
    Therefore, $\Cont_Q$ sends an upset (resp., single-source) interval modules to upset (resp., single-source) interval modules.     
\end{proposition}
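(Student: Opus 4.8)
The plan is to establish (1) and (2) directly from the defining formula~\eqref{eq:tbar} for $\overline{T}^Q$, and then to read off the final assertion from Proposition~\ref{prop:interval-Ind-Cont}(2), which gives $\Cont_Q\mathbb{I}_T\cong\mathbb{I}_{\overline{T}^Q}$. Throughout, $T$ is an interval of $P$, so Lemma~\ref{lem:cont interval} already tells us $\overline{T}^Q$ is convex in $Q$.

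For (1), assume $T$ is an upset of $P$ and write $\lfloor T\rfloor_Q:=\{\lfloor t\rfloor_Q\mid t\in T\}$. First I would prove $\overline{T}^Q=\lfloor T\rfloor_Q$. The inclusion $\subseteq$ is immediate: any $x\in\overline{T}^Q$ admits $t\in T\cap\lceil x\rceil_Q$, forcing $x=\lfloor t\rfloor_Q$. For $\supseteq$, given $t\in T$ put $x=\lfloor t\rfloor_Q$; then $t$ witnesses $T\cap\lceil x\rceil_Q\neq\emptyset$, and the requirement that $T\cap\lceil x\rceil_Q$ be an upset of $\lceil x\rceil_Q$ is inherited directly from $T$ being an upset of $P$, so $x\in\overline{T}^Q$. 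Finally, to see $\lfloor T\rfloor_Q$ is an upset of $Q$, take $x=\lfloor t\rfloor_Q\leq y$ in $Q$ and apply the reformulation of (AL1) from the proof of Definition-Proposition~\ref{def:aligned intersys} to $t\in\lceil x\rceil_Q$ to produce $b\in\lceil y\rceil_Q$ with $t\leq b$; then $b\in T$ and $y=\lfloor b\rfloor_Q\in\lfloor T\rfloor_Q$.

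For (2), assume $T$ has minimum $a$ and $\overline{T}^Q\neq\emptyset$, and fix some $z\in\overline{T}^Q$. For every $x\in\overline{T}^Q$, choosing $t\in T\cap\lceil x\rceil_Q$ gives $a\leq t$ and hence $\lfloor a\rfloor_Q\leq\lfloor t\rfloor_Q=x$, so $\lfloor a\rfloor_Q$ is a lower bound of $\overline{T}^Q$; it thus suffices to prove $\lfloor a\rfloor_Q\in\overline{T}^Q$. Non-emptiness of $T\cap\lceil\lfloor a\rfloor_Q\rceil_Q$ is witnessed by $a$ itself. For the upset condition, take $s\in T\cap\lceil\lfloor a\rfloor_Q\rceil_Q$ and $q\in\lceil\lfloor a\rfloor_Q\rceil_Q$ with $s\leq q$, and aim to show $q\in T$. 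Using $\lfloor a\rfloor_Q\leq z$ together with (AL1) I obtain $r\in\lceil z\rceil_Q$ with $q\leq r$; using that $z\in\overline{T}^Q$ (so $T\cap\lceil z\rceil_Q$ is non-empty and an upset of the filtered set $\lceil z\rceil_Q$, cf.\ the proof of Lemma~\ref{lem:cont interval}) I obtain $p\in T\cap\lceil z\rceil_Q$ with $r\leq p$. Now $a\leq s\leq q\leq r\leq p$ with $a,p\in T$, so convexity of $T$ forces $q\in T$, hence $q\in T\cap\lceil\lfloor a\rfloor_Q\rceil_Q$ as desired.

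For the closing assertion I combine the above with Proposition~\ref{prop:interval-Ind-Cont}(2). If $T$ is an upset, then $\overline{T}^Q=\lfloor T\rfloor_Q$ is convex in $Q$ by Lemma~\ref{lem:cont interval}, connected as the image of the connected set $T$ under the order-preserving floor map, and an upset of $Q$ by (1); hence $\Cont_Q\mathbb{I}_T\cong\mathbb{I}_{\overline{T}^Q}$ is an upset interval module, the empty case giving the zero module. If instead $T$ has a minimum, then either $\overline{T}^Q=\emptyset$ and $\Cont_Q\mathbb{I}_T=0$, or by (2) $\overline{T}^Q$ has a minimum, which makes it convex and connected (any two of its elements being comparable through the minimum), i.e., a single-source interval; in either case $\Cont_Q\mathbb{I}_T$ is a single-source interval module. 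I expect the main obstacle to be the upset condition in (2): it requires simultaneously using the minimum $a$ of $T$ to bound $q$ from below within $T$ and some point of $\overline{T}^Q$ to bound $q$ from above within $T$, closing up by convexity, and this is precisely the step where the hypothesis $\overline{T}^Q\neq\emptyset$ is genuinely used.
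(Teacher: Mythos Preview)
Your proof is correct and follows essentially the same route as the paper: both parts are argued directly from the definition of $\overline{T}^Q$, using (AL1) for the upset-stability in (1) and the chain $a\leq q\leq p$ together with convexity of $T$ for the upset condition at $\lfloor a\rfloor_Q$ in (2). One small slip in wording: in the closing paragraph you say that having a minimum ``makes $\overline{T}^Q$ convex and connected''; a minimum only gives connectedness, while convexity is the content of Lemma~\ref{lem:cont interval} that you already invoked at the outset.
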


\begin{proof}
    (1) Suppose that $T$ is an upset of $P$. 
    Firstly, we check that $\lfloor T \rfloor_Q$ is an upset of $Q$. 
    Let $x = \lfloor a \rfloor_Q \in \lfloor T \rfloor_Q$ with $a\in T$. 
    For $y\in Q$ with $x\leq y$, by (AL1), 
    there is $b\in \lceil y \rceil_Q$ such that $a\leq b$. Since $T$ is an upset, we have $b\in T$. 
    By $y = \lfloor b \rfloor_Q$, we have $y \in \lfloor T \rfloor_Q$ as desired. 
    Secondly, we have $\overline{T}^Q \subseteq \lfloor T \rfloor_Q$ since $x\in \overline{T}^Q$ implies that $T\cap \lceil x \rceil_Q$ is non-empty. 
    Finally, for any $x\in \lfloor T \rfloor_Q$, we note that $T\cap \lceil x \rceil_Q$ is non-empty. 
    Then, it is an upset of $\lceil x \rceil_Q$ since $T$ is an upset. This shows $x\in \overline{T}^Q$ by \eqref{eq:tbar}.

    
    (2) Let $T$ be an interval of $P$ having the minimum $a$. 
    For simplicity, let $y := \lfloor a \rfloor_Q\in Q$. 
    Suppose that $\overline{T}^Q$ is non-empty and take an arbitrary $z\in \overline{T}^Q$.  
    In this case, there is an element $p\in T\cap \lceil z \rceil_Q$. 
    Since $a$ is the minimum of $T$, it satisfies $a \leq p$. 
    Applying $\floorQ$, we obtain $y \leq z$. 

    Thus, it remains to show that $y \in \overline{T}_Q$, namely, $T\cap \lceil y \rceil_Q$ is an upset of $\lceil y \rceil_Q$. 
    Since $a$ is the minimum of $T$, it is sufficient to check that any $b\in \lceil y \rceil_Q$ with $a\leq b$ is contained in $T$. 
    Take any $z\in \overline{T}^Q$. 
    Then, $y\leq z$ holds in $Q$ as in the previous paragraph. 
    By (AL1) with $y\leq z$,
    there exists $c \in \lceil z \rceil_Q$ such that $b\leq c$. 
    Since $z\in \overline{T}^Q$, we have an element $p \in T \cap \lceil z \rceil_Q$ such that $c\leq p$. 
    They satisfy relations $a\leq b \leq c \leq p$ with $b,p\in T$. 
    Since $T$ is convex, we have $b\in T$ (and $c\in T$) as desired. 
\end{proof}

\begin{corollary}\label{cor:adj_upset/ss}
    If $Q$ is an aligned interior system of $P$, 
    then the adjoint pair \eqref{eq:Ind-Cont} restricts to the adjoint pairs between upset/single-source interval-decomposable modules. 
\end{corollary}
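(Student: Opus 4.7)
The plan is to assemble the statement from the ingredients already established. By Proposition~\ref{prop:interval-Ind-Cont}, we know that the adjoint pair $\Cont_Q \dashv \Ind_Q \colon \Rep P \rightleftarrows \Rep Q$ restricts to an adjoint pair between the full subcategories of interval-decomposable modules. So the task reduces to showing that $\Ind_Q$ and $\Cont_Q$ each preserve the sub-class of \emph{upset} interval-decomposable modules, and separately preserve the sub-class of \emph{single-source} interval-decomposable modules; once those closure properties are in hand, the adjunction automatically restricts since the hom-set bijection is simply inherited.

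First I would handle $\Ind_Q$. Any upset (resp.\ single-source) interval-decomposable $Q$-module is, by definition, a direct sum $\bigoplus_{\lambda} \mathbb{I}_{S_\lambda}$ where each $S_\lambda$ is an interval of $Q$ that is an upset (resp.\ has a minimum). Since $\Ind_Q$ is additive (being a left Kan extension, hence a left adjoint), it commutes with arbitrary direct sums, so $\Ind_Q \bigl(\bigoplus_\lambda \mathbb{I}_{S_\lambda}\bigr) \cong \bigoplus_\lambda \Ind_Q \mathbb{I}_{S_\lambda} \cong \bigoplus_\lambda \mathbb{I}_{\lceil S_\lambda \rceil_Q}$ by Proposition~\ref{prop:interval-Ind-Cont}(1), and each $\lceil S_\lambda \rceil_Q$ is again an upset (resp.\ has a minimum) by Proposition~4.6.

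The argument for $\Cont_Q$ is the same in spirit: it is a left adjoint (by Proposition~\ref{prop:basic Cont}(1)), hence additive and commuting with arbitrary direct sums. Therefore for any upset (resp.\ single-source) interval-decomposable $P$-module $\bigoplus_{\mu} \mathbb{I}_{T_\mu}$, we get $\Cont_Q \bigl(\bigoplus_\mu \mathbb{I}_{T_\mu}\bigr) \cong \bigoplus_\mu \mathbb{I}_{\overline{T_\mu}^Q}$ by Proposition~\ref{prop:interval-Ind-Cont}(2), and each $\overline{T_\mu}^Q$ is either empty or is an upset (resp.\ has a minimum) by Proposition~\ref{prop:Tbar_upset}. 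Zero summands coming from empty $\overline{T_\mu}^Q$ are harmless and can be discarded.

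The only subtlety, and really the sole point worth checking carefully, is that the restricted pair is genuinely adjoint — but this is automatic: if $\mathcal{C}' \subseteq \Rep P$ and $\mathcal{D}' \subseteq \Rep Q$ are full subcategories with $\Cont_Q(\mathcal{C}') \subseteq \mathcal{D}'$ and $\Ind_Q(\mathcal{D}') \subseteq \mathcal{C}'$, then restricting the natural isomorphism $\Hom_{\Rep Q}(\Cont_Q X, Y) \cong \Hom_{\Rep P}(X, \Ind_Q Y)$ to $X\in \mathcal{C}'$ and $Y \in \mathcal{D}'$ yields the required adjunction between $\mathcal{C}'$ and $\mathcal{D}'$. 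No real obstacle arises; the work was done in Propositions~\ref{prop:interval-Ind-Cont}, 4.6, and \ref{prop:Tbar_upset}, and this corollary is a formal packaging of those results via additivity of the two functors.
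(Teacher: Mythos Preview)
Your proposal is correct and matches the paper's approach: the corollary is stated without proof in the paper, being an immediate consequence of Proposition~\ref{prop:interval-Ind-Cont}, the unlabelled proposition on $\Ind_Q$ preserving upset/single-source intervals, and Proposition~\ref{prop:Tbar_upset}, combined with additivity of both functors. The only quibble is numbering: the proposition you cite as ``4.6'' for the $\Ind_Q$ side is actually the unlabelled Proposition~4.5 in the paper (Proposition~\ref{prop:Tbar_upset} is 4.6), but the content and logic are exactly as intended.
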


We put a remark on preservation of interval-decomposability of modules.

\begin{remark} \label{rem:preservation_intervals}
    On Proposition \ref{prop:interval-Ind-Cont}(2), 
    if $Q$ is an interior system which is not aligned, 
    then the contraction functor may send an interval module to a non-interval-decomposable module in general. 
  Indeed, the following example illustrates this.
    Let $P$ and $Q$ be posets given by the following  Hasse diagrams:   
    \begin{equation}
        \begin{tikzpicture}   
        \node at (-2,1.7) {$Q$};
        \node at (-2,0) {$\xymatrix@C10pt@R15pt{
         &w&  \\ 
        & z \ar[u] & \\ 
        x \ar[ur] & & \ar[ul] y  
        }$}; 
        \node at (0,0) {$\subseteq$}; 
        \node at (3,1.7) {$P$};
        \node at (3,0) {$\xymatrix@C10pt@R15pt{
        && w &&  \\ 
        x' \ar[urr] &&  z \ar[u] &  \\ 
        &\ar[ul] x \ar[ur] & & \ar[ul] y  & 
        }$}; 
        \end{tikzpicture}    
    \end{equation}
    Then, $Q=P\setminus \{x'\}$ is an interior system of $P$ with satisfying $\lfloor x'\rfloor_Q = x$. 
    However, it is not aligned because $\lceil
    z^{\downarrow} \rceil_Q = \{x,x',y,z\}$ is not filtered. 
    Consider an interval $S = \{y,z,w,x'\}$ of $P$. 
    Recalling the definition \eqref{eq:contM}, 
    we find that $\Cont_Q \mathbb{I}_S$ is isomorphic to 
    \begin{equation}
    \begin{tikzpicture}   
    \node at (-1,0) {$\Cont_Q \mathbb{I}_S \ \cong $};
    \node at (1.8,0) {$\xymatrix@C10pt@R15pt{
    &k & \\ 
    & k^2 \ar[u]_-{
    \left[\begin{smallmatrix}
    1 & 1
    \end{smallmatrix}
    \right]
    } && \\ 
    k \ar[ur]^-{
    \left[\begin{smallmatrix}
    1 \\ 0
    \end{smallmatrix}
    \right]
    } & & \ar[ul]_-{
    \left[\begin{smallmatrix}
    0 \\ 1
    \end{smallmatrix}
    \right]
    } k. & 
    }$}; 
    \end{tikzpicture}  
    \end{equation}
    Then, it is indecomposable which is not an interval. 
    Consequently, it is not interval-decomposable.

    We also note that contraction functors may preserve interval-decomposability of modules even when $Q$ is not aligned. 
    Indeed, it holds if $Q$ is an $A_n$-type interior system of a finite poset. 
\end{remark}

\subsection{Application to interval resolutions}
\label{sec:Ind-Cont resolusion}

Let $P$ be a poset. 
We recall that $\mathscr{I}_P$ is the full subcategory consisting of interval-decomposable $P$-persistence modules that are finitely presentable. 

For $M \in \fprep P$, we refer to a right $\mathscr{I}_P$-approximation (resp., a minimal right $\mathscr{I}_P$-approximation) of $M$ by an \emph{interval approximation} (resp., \emph{interval cover}) of $M$.  
Namely, an interval approximation of $M$ is a morphism $\varphi \colon V\to M$ with $V\in \mathscr{I}_P$ such that the induced map $\Hom(U,V) \to \Hom(U,M)$ is surjective for any $U\in \mathscr{I}_P$. 
In addition, it is called an interval cover if moreover it is right minimal, i.e., every endomorphism $\kappa \colon V\to V$ satisfying $\varphi = \varphi \circ \kappa$ is an automorphism.

If $M$ admits an interval approximation, 
then it gives rise to an interval cover by taking its right minimal version. In particular, interval covers are determined uniquely up to isomorphism. 
Moreover, it is always surjective because $\mathscr{I}_P$ contains all finitely generated projectives by Proposition~\ref{prop:proj as interval}.

\begin{definition}
    For $0\neq M\in \fprep M$, 
    we say that $M$ admits an \emph{interval resolution}
    if there is an exact sequence
    \begin{equation}\label{eq:resol_M}
        \xymatrix@C10pt@R18pt{
        & \cdots \cdots \ar[rr]^{\varphi_3} \ar[rd]_{\bar{\varphi}_3} && V_2 \ar[dr]_{\bar{\varphi}_2} \ar[rr]^{\varphi_2} && V_1 \ar[rr]^-{\varphi_1} \ar[dr]_{\bar{\varphi}_1} && V_0 \ar[rr]^-{\varphi_0} && M \ar[rr] && 0 \\ 
         && \ker \varphi_2 \ar@{}[u]|{\circlearrowleft} \ar[ur]_{\iota_2} && \ker \varphi_1 
         \ar@{}[u]|{\circlearrowleft}
         \ar[ur]_{\iota_1} &&\ker \varphi_0 
         \ar@{}[u]|{\circlearrowleft} \ar[ur]_{\iota_0} &&&&
        }
    \end{equation}
    in $\fprep P$ such that 
    \begin{itemize}
        \item $V_i\in \mathscr{I}_P$ for all $i\geq 0$, 
        \item $\varphi_0$ is an interval cover of $M$, 
        \item $\bar{\varphi}_i$ is an interval cover of $\ker \varphi_{i-1}$, and $\iota_{i-1}$ is a canonical inclusion for all $i>0$. 
    \end{itemize}
    In this case, we call \eqref{eq:resol_M} \emph{interval resolution} of $M$ and simply write $\varphi_{\bullet}\colon V_{\bullet}\to M$ for it.
    Notice that it is uniquely determined up to isomorphism if exists. 
    Then, the \emph{interval resolution dimension} of $M$ is defined by the supremum 
    \begin{equation}
        \intresdim M := \sup \{m\in \mathbb{Z}_{\geq 0}\mid V_m \neq 0\}. 
    \end{equation} 
    In addition, the interval resolution dimension of the zero object is defined to be $0$. 
    Finally, if every object of $\fprep P$ admits an interval resolution (For example, this condition is satisfied when $P$ is finite), 
    then we define the \emph{interval resolution global dimension} of $P$ by   
    \begin{equation}
        \intresgldim P := \sup_{M\in \fprep P} \intresdim M. 
    \end{equation}
    We note that these supremum can be infinite.
\end{definition}

Now, we give one of our main results in this paper, 
which asserts that the induction functor over an aligned interior system preserves interval covers and interval resolutions. 

\begin{theorem}\label{thm:Ind-intervalcover}
    Let $Q\subseteq P$ be an aligned interior system. 
    Then, we have the following statements. 
    \begin{enumerate}[\rm (a)]
        \item For $M\in \fprep Q$, if it admits an interval cover $\varphi \colon V\to M$,
        then $\Ind_QM$ also admit an interval cover and given by $\Ind_Q\varphi \colon \Ind_QV\to \Ind_Q M$. 
        \item For $M\in \fprep Q$, 
        if it admits an interval resolution  $\varphi_{\bullet}\colon V_{\bullet}\to M$, 
        then $\Ind_QM$ also admits an interval resolution and given by 
        $\Ind_Q \varphi_{\bullet}\colon \Ind_Q V_{\bullet}\to \Ind_QM$.  
        In particular, we have 
        \begin{equation} \label{eq:intresdimM}
            \intresdim M = \intresdim \Ind_QM. 
        \end{equation}
        \item If every object of $\fprep Q$ admits an interval resolution, 
        then any object of the essential image $\mathcal{L}_Q$ (defined in Subsection \ref{subsec:Kan extension}) admits an interval resolution, and therefore
        \begin{equation}
            \intresgldim Q = \sup_{X\in \mathcal{L}_Q} \intresdim X.  
        \end{equation}
    \end{enumerate}
\end{theorem}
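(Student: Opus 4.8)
The plan is to reduce everything to formal manipulations with the adjunction $\Cont_Q \dashv \Ind_Q$ of Proposition~\ref{prop:basic Cont}(1), using three inputs: $\Ind_Q$ is exact and fully faithful (Propositions~\ref{prop:idemp_embed} and~\ref{prop:basic Cont}) and restricts to a functor $\mathscr{I}_Q\to\mathscr{I}_P$ (Proposition~\ref{prop:interval-Ind-Cont}(1) together with Proposition~\ref{prop:idemp_embed}(e)); and, crucially, that when $Q$ is aligned its left adjoint $\Cont_Q$ restricts to a functor $\mathscr{I}_P\to\mathscr{I}_Q$ (Proposition~\ref{prop:interval-Ind-Cont}(2), together with the facts that $\Cont_Q$ commutes with direct sums, being a left adjoint, and preserves finite presentability by Proposition~\ref{prop:basic Cont}(5)). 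For part~(a) I would argue as follows. Given an interval cover $\varphi\colon V\to M$ with $V\in\mathscr{I}_Q$, first note $\Ind_Q V\in\mathscr{I}_P$. For any $U\in\mathscr{I}_P$, naturality of the adjunction in the target identifies the post-composition map $\Hom_P(U,\Ind_Q V)\to\Hom_P(U,\Ind_Q M)$ induced by $\Ind_Q\varphi$ with the post-composition map $\Hom_Q(\Cont_Q U, V)\to\Hom_Q(\Cont_Q U, M)$ induced by $\varphi$; the latter is surjective because $\Cont_Q U\in\mathscr{I}_Q$ and $\varphi$ is a right $\mathscr{I}_Q$-approximation, so $\Ind_Q\varphi$ is a right $\mathscr{I}_P$-approximation. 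Right minimality is inherited through full faithfulness: an endomorphism $\kappa$ of $\Ind_Q V$ with $\Ind_Q\varphi\circ\kappa=\Ind_Q\varphi$ has the form $\Ind_Q\kappa'$ for a unique $\kappa'\in\End_Q(V)$ with $\varphi\circ\kappa'=\varphi$, so $\kappa'$ is an automorphism by minimality of $\varphi$ and hence so is $\kappa=\Ind_Q\kappa'$.

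For part~(b) I would apply $\Ind_Q$ termwise to the exact sequence~\eqref{eq:resol_M}. Exactness of $\Ind_Q$ keeps the sequence exact in $\fprep P$ and supplies canonical isomorphisms $\Ind_Q(\ker\varphi_{i-1})\cong\ker(\Ind_Q\varphi_{i-1})$ under which $\Ind_Q\iota_{i-1}$ becomes the canonical inclusion; each $\Ind_Q V_i$ lies in $\mathscr{I}_P$ as above; and part~(a), applied to the interval cover $\varphi_0$ of $M$ and to each interval cover $\bar\varphi_i$ of $\ker\varphi_{i-1}$, shows that $\Ind_Q\varphi_0$ and $\Ind_Q\bar\varphi_i$ are again interval covers. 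Hence $\Ind_Q\varphi_\bullet$ satisfies every requirement in the definition and is an interval resolution of $\Ind_Q M$. For the dimension equality~\eqref{eq:intresdimM} it then suffices to observe that $V_m=0$ precisely when $\Ind_Q V_m=0$: one direction is additivity of $\Ind_Q$, and the other follows from the isomorphism $\End_Q(V_m)\cong\End_P(\Ind_Q V_m)$ given by full faithfulness.

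Part~(c) is then immediate. Any $X\in\mathcal{L}_Q$ is isomorphic to $\Ind_Q M$ for some $M\in\fprep Q$; by hypothesis $M$ admits an interval resolution, so by part~(b) $X$ does too, with $\intresdim X=\intresdim\Ind_Q M=\intresdim M$. Since $\mathcal{L}_Q$ is, up to isomorphism, exactly the class $\{\Ind_Q M\mid M\in\fprep Q\}$, taking suprema yields $\sup_{X\in\mathcal{L}_Q}\intresdim X=\sup_{M\in\fprep Q}\intresdim M=\intresgldim Q$.

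The only step that is not purely formal is the input that $\Cont_Q$ carries $\mathscr{I}_P$ into $\mathscr{I}_Q$, i.e. that it preserves finitely presentable interval-decomposable modules; this is exactly where alignedness of $Q$ enters, via Proposition~\ref{prop:interval-Ind-Cont}(2). Once this and the mere existence of the left adjoint $\Cont_Q$ are granted, the remainder is a diagram chase through the adjunction and the exactness of $\Ind_Q$, and no structural hypotheses on $P$ are needed.
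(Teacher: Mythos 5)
Your proposal is correct and follows essentially the same route as the paper: both reduce the statement to the adjunction $\Cont_Q \dashv \Ind_Q$, exactness and full faithfulness of $\Ind_Q$, and the preservation of $\mathscr{I}$ on both sides (Proposition~\ref{prop:interval-Ind-Cont}, which is where alignedness enters). The only cosmetic difference is in the right-minimality step of part~(a): you lift $\kappa$ through the bijection $\End_Q(V)\cong\End_P(\Ind_Q V)$ given by full faithfulness, whereas the paper first applies $\Cont_Q$, uses $\Cont_Q\circ\Ind_Q\simeq\id$ to conclude $\Cont_Q(\kappa)$ is an automorphism, and transfers back; these are two equivalent ways of exploiting the same full faithfulness.
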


\begin{proof}
    For simplicity, we write $F := \Cont_Q$ and $G:=\Ind_Q$ with $F\dashv G$. 
    Then, Proposition \ref{prop:interval-Ind-Cont} asserts that they restrict to an adjoint pair 
    $F|_{\mathscr{I}_P} \dashv G|_{\mathscr{I}_Q} \colon \mathscr{I}_P \rightleftarrows \mathscr{I}_Q$.
    
    (a) Suppose that $M\in \fprep Q$ admits an interval approximation $\varphi\colon V\to M$ with $V\in \mathscr{I}_Q$. 
    Applying the functor $G$, we obtain a morphism $G(\varphi) \colon G(V)\to G(M)$. 
    Then, we have $G(V)\in \mathscr{I}_{P}$ by $V\in \mathscr{I}_Q$. 
    We claim that $G(\varphi)$ is an interval approximation of $G(V)$. 
    Consider any morphism $\psi\colon U\to G(M)$ with $U\in \mathscr{I}_P$. 
    From an isomorphism 
    \begin{equation}\label{eq:HomFUtoM}
        \Hom_Q(F(U), M)\cong \Hom_P(U,G(M)) 
    \end{equation}
    given by the adjoint pair $F\dashv G$, 
    we have the map $\psi'\colon F(U) \to M$ corresponding to $\psi$. 
    Since $\varphi$ is an interval approximation of $M$, 
    there exists $\rho\colon F(U) \to V$ such that $\psi' = \varphi\circ \rho$. 
    By an isomorphism 
    \begin{equation}
        \Hom_Q(F(U),V)\cong \Hom_P(U,G(V)), 
    \end{equation} 
    we obtain the map $\rho'\colon U\to G(M)$ corresponding to $\rho$. 
    By their construction, they satisfy $\psi = \rho' \circ G(\varphi)$. 
    Therefore, $G(\varphi)$ is an interval approximation. 
    \begin{equation*}
        \xymatrix@C28pt@R28pt{
        V \ar[r]^{\varphi} & M \\
        \ar@{}[ru]|(0.68){\circlearrowleft} & F(U) \ar[u]_{\psi'} \ar@{.>}[ul]^{\rho} 
        } 
        \quad \quad 
        \xymatrix@C28pt@R28pt{
        G(V) \ar[r]^{G(\varphi)} & G(M) \\
        \ar@{}[ru]|(0.68){\circlearrowleft} & U \ar[u]_{\psi} \ar@{.>}[ul]^{\rho'} 
        } 
    \end{equation*}

    Next, we assume that $\varphi$ is an interval cover. 
    We consider an endomorphism $\kappa \colon G(V)\to G(V)$ satisfying $G(\varphi) = G(\varphi) \circ \kappa$. 
    Applying $F$, we obtain $FG(\varphi) = FG(\varphi)\circ F(\kappa)$. 
    Since $FG(\varphi) \colon FG(V)\to FG(M)$ is also an interval cover by $FG \simeq \id_{\Rep Q}$, 
    it implies that $F(\kappa)$ is an automorphism of $FG(V)$. 
    Since there is an isomorphism 
    \begin{equation}
        \Hom_P(G(V),G(V))\cong \Hom_Q(FG(V),FG(V)) \quad
        (\alpha \mapsto F(\alpha)), 
    \end{equation} 
    $\kappa$ should be an automorphism of $G(V)$. 
    Thus, $G(\varphi)$ is an interval cover. 
    
    (b) By Propositions \ref{prop:idemp_embed}(a) and \ref{prop:basic Cont}(3), the functor $G$ is fully faithful and exact. 
    Then, we clearly have the assertion. 
    
    (c) This is immediate from (b). 
\end{proof}

The following corollary will be used later. 

\begin{corollary}\label{cor:reduce_to_aligned}
    Let $P$ be a poset such that every object in $\fprep P$ admits an interval resolution. 
    Suppose that $\mathcal{X}\subseteq \fprep P$ is a collection of finitely presentable $P$-persistence module satisfying the following two conditions:  
    \begin{itemize}
        \item $\intresgldim P = \sup_{X\in \mathcal{X}} \intresdim X$. 
        \item For any $X\in \mathcal{X}$, we have an aligned interior system $Q_X\subseteq P$ such that $X\in \mathcal{L}_{Q_X}$. 
    \end{itemize}
    Then, we have 
    \begin{equation}
        \intresgldim P = \sup_{X \in \mathcal{X}} \intresgldim Q_X. 
    \end{equation}
\end{corollary}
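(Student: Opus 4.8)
The plan is to prove the equality by establishing the two inequalities separately, using Theorem \ref{thm:Ind-intervalcover}(c) as the central tool. First I would observe that for each $X \in \mathcal{X}$ we have an aligned interior system $Q_X \subseteq P$ with $X \in \mathcal{L}_{Q_X}$; in particular $X$ lies in the essential image of $\Ind_{Q_X}$, so write $X \cong \Ind_{Q_X} M_X$ for some $M_X \in \fprep Q_X$. Since $\intresgldim P$ is finite (or at least since every object of $\fprep P$ admits an interval resolution, and hence so does every object of each $\fprep Q_X$ by restricting attention to $\mathcal{L}_{Q_X} \simeq \fprep Q_X$ — more carefully, one should note that the hypothesis of Theorem \ref{thm:Ind-intervalcover}(c) that every object of $\fprep Q_X$ admits an interval resolution needs to be checked, and this is where I would pause), Theorem \ref{thm:Ind-intervalcover}(b) gives $\intresdim X = \intresdim \Ind_{Q_X} M_X = \intresdim M_X \le \intresgldim Q_X$. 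Taking the supremum over $X \in \mathcal{X}$ and using the first hypothesis yields $\intresgldim P = \sup_{X} \intresdim X \le \sup_{X} \intresgldim Q_X$.

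For the reverse inequality, I would use the monotonicity of interval resolution global dimension under full subposet inclusions. Since each $Q_X$ is in particular a full subposet of $P$, the inequality $\intresgldim Q_X \le \intresgldim P$ should hold — this is precisely the monotonicity statement attributed to \cite[Theorem 4.1]{AET} in the introduction (proved there via the intermediate extension functor). Taking the supremum over $X \in \mathcal{X}$ gives $\sup_{X} \intresgldim Q_X \le \intresgldim P$. Combining the two inequalities yields the claimed equality.

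The main obstacle I anticipate is the bookkeeping around which objects are guaranteed to admit interval resolutions: to invoke Theorem \ref{thm:Ind-intervalcover}(b) for $M_X$ one needs $M_X$ (and the kernels in its resolution) to admit interval resolutions in $\fprep Q_X$, and to invoke part (c) one needs this for all of $\fprep Q_X$. One clean way to secure this is to note that $\mathcal{L}_{Q_X} \simeq \fprep Q_X$ via the fully faithful exact functor $\Ind_{Q_X}$ (Propositions \ref{prop:idemp_embed}(a) and \ref{prop:basic Cont}(3)), and that $\Ind_{Q_X}$ together with its exact left adjoint $\Cont_{Q_X}$ transports interval resolutions in $\fprep P$ (restricted to $\mathcal{L}_{Q_X}$) back to interval resolutions in $\fprep Q_X$; since every object of $\fprep P$ admits an interval resolution by hypothesis, so does every object of $\fprep Q_X$. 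Alternatively, if the intended reading is that $Q_X$ ranges only over posets already known to have this property (e.g. finite posets, as parenthetically noted in the definition of $\intresgldim$), then this concern evaporates and the argument above goes through verbatim. Apart from this point, the proof is a formal consequence of results already established.
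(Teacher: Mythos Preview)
Your argument is correct and close to the paper's one-line ``immediate from Theorem~\ref{thm:Ind-intervalcover}(c)''. The one genuine difference is the reverse inequality $\sup_{X} \intresgldim Q_X \le \intresgldim P$: you appeal to the monotonicity result of \cite{AET}, whereas the paper extracts it directly from Theorem~\ref{thm:Ind-intervalcover}(c) itself. Indeed, part~(c) says $\intresgldim Q_X = \sup_{Y \in \mathcal{L}_{Q_X}} \intresdim Y$, and the right-hand side is a supremum of interval resolution dimensions computed \emph{in $\fprep P$} over the subcollection $\mathcal{L}_{Q_X} \subseteq \fprep P$, hence is bounded by $\intresgldim P$. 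This is more self-contained---the monotonicity from \cite{AET} is invoked in the paper only in the finite-poset setting---and it makes both inequalities fall out of the single identity in~(c).

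The bookkeeping concern you raise (that Theorem~\ref{thm:Ind-intervalcover}(c) needs every object of $\fprep Q_X$ to admit an interval resolution, while the Corollary assumes this only for $\fprep P$) is legitimate; the paper's proof does not address it either. In the paper's applications (Section~\ref{sec:stabilizing} onward) $P$ is finite and the issue does not arise, and in general your proposed transport along the exact functor $\Cont_{Q_X}$ together with $\fprep Q_X \simeq \mathcal{L}_{Q_X}$ is the right idea.
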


\begin{proof}
    This is immediate from Theorem \ref{thm:Ind-intervalcover}(c). 
\end{proof}

In practice, one can apply Theorem \ref{thm:Ind-intervalcover} to compute an interval cover/resolution of a given persistence module by the following strategy. 

\begin{remark}\label{rem:computational interval cover}
    Let $M$ be a finitely presentable $P$-persistence module. 
    By definition, $M$ is obtained as the cokernel of the form 
    $\coker\big(\bigoplus_{b\in \mathcal{J}} \mathbb{I}_{b^{\uparrow}} \to \bigoplus_{a\in \mathcal{I}} \mathbb{I}_{a^{\uparrow}}\big)$, 
    where $\mathcal{I}$ and $\mathcal{J}$ are finite multiset of elements of $P$. 
    We choose a subset $Q\subseteq P$ such that $Q$ contains both $\mathcal{I}, \mathcal{J}$ (as sets) and $Q$ is an aligned interior system of $Q^{\uparrow}$, as small as possible. 
    Indeed, such a poset always exists by Example \ref{ex:aligned posets}(1) but not necessarily finite. 
    In this situation, $M$ belongs to the full subcategory $\mathcal{L}_Q$ and satisfies $M\cong \Ind_Q\Cont_Q M$. 
    Applying Theorem \ref{thm:Ind-intervalcover}, we find that the the interval cover/resolution of $M$ can be directly obtained from those of $\Cont_QM$ (if exist). 
\end{remark}

\section{A stabilizing property of interval resolution global dimensions}
\label{sec:stabilizing}

In this section, we restrict our attention to \emph{finite} posets, over which pfd persistence modules coincide with finitely presentable persistence modules (see Section \ref{sec:prelim}). 
For our purpose, we assume that finite posets are connected if otherwise specified throughout this section.

Let $P$ be a finite poset and $\Hasse(P)$ the Hasse diagram of $P$ (Definition \ref{def:Hasse}). 
For an element $x\in P$, we say that the \emph{degree} $\deg(x)$ of $x$ is a sum $\deg(x) := \outdeg(x) + \indeg(x)$, 
where $\outdeg(x)$ (resp., $\indeg(x)$) is the number of arrows in $\Hasse(P)$ starting (resp., ending) at $x$. 
In addition, we say that $x$ is a \emph{source} (resp., \emph{sink}) if $\indeg(x)=0$ (resp., $\outdeg(x)=0$), in other words, it is a minimum (resp., maximum) element of $P$.  
In addition, we say that $x$ is a \emph{leaf} if $\deg(x) = 1$. By definition, every leaf is either a source or a sink of $P$.

\begin{definition}\label{def:An-type segment}
    For $n\geq 2$, we say that an \emph{$A_n$-type segment} of $P$ is an interval $L\subseteq P$ 
    whose underlying graph 
    is an $A_n$-type graph of the form  
    \begin{equation}\label{eq:A-type segment}
        \xymatrix{
        \ell_1 \ar@{-}[r] & \ell_2 \ar@{-}[r]
        & \cdots \cdots \ar@{-}[r] & \ell_{n-1} \ar@{-}[r] & \ell_{n}}
    \end{equation} 
    with satisfying 
    $\deg(\ell_{i}) = 2$ for all $i \in \{2,\ldots,n-1\}$ and 
    $\outdeg(\ell_{i_1}) = 1$ (resp., $\indeg(\ell_{i_1}) = 1$) if $\ell_{i_1} \to \ell_{i_2}$ (resp., $\ell_{i_1} \leftarrow \ell_{i_2}$) 
    for $(i_1,i_2)\in \{(1,2), (n,n-1)\}$. 
    In addition, it is called \emph{equioriented $A_n$-type segment} if moreover
    $L$ is given by 
    \begin{equation}\label{eq:equiA-type segment}
        \xymatrix{\ell_1 \ar@{->}[r] & \ell_2 \ar@{->}[r]
        & \cdots \cdots \ar@{->}[r] & \ell_{n-1} \ar@{->}[r] & \ell_{n},} 
    \end{equation} 
    in which case $\outdeg(\ell_1) = \indeg(\ell_n)=1$ by definition. 
\end{definition}

The next result gives a way to transform a given finite poset into a smaller one with preserving interval resolution global dimensions. 

\begin{theorem}\label{thm:cont chains}
Let $P$ be a finite poset. Suppose that $P$ has an $A_n$-type segment $L$ as in Definition~\ref{def:An-type segment} with $n\geq 4$. 
Let $P'$ be a full subposet obtained from $P$ by removing elements $\ell_4,\ldots,\ell_n$. 
\begin{enumerate}[\rm (a)]
    \item If $L$ is equioriented, then we have 
\begin{equation}
    \intresgldim P = \intresgldim P'. 
\end{equation}
    \item If $\ell_n$ is a leaf of $P$, then we have 
\begin{equation}
    \intresgldim P = \intresgldim P'. 
\end{equation}
\end{enumerate}
\end{theorem}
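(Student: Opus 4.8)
The inequality $\intresgldim P' \le \intresgldim P$ is free, since interval resolution global dimension is monotone along full subposet inclusions (this is one of the consequences of the intermediate-extension results of \cite{AET}), and $P'$ is a full subposet of $P$. So the entire content is the reverse inequality $\intresgldim P \le \intresgldim P'$, and the plan is to prove it by induction on $n$ (equivalently, on $|P|-|P'|$).

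The structural input is that $P'$ is an \emph{aligned} interior system of $P$. Indeed, since $\ell_2,\dots,\ell_{n-1}$ have degree $2$ in $\Hasse(P)$, the down-set inside $P'$ of each $\ell_j$ with $4\le j\le n$ has maximum $\ell_3$; hence $\lfloor \ell_j\rfloor_{P'}=\ell_3$ for $j\ge 4$ and $\lfloor x\rfloor_{P'}=x$ otherwise, the unique non-singleton fibre being the chain $\lceil\ell_3\rceil_{P'}=\{\ell_3\lessdot\ell_4\lessdot\dots\lessdot\ell_n\}$, from which (AL1) and (AL2) follow directly. In case (a) this needs no preparation. In case (b), after replacing $P$ by $P^{\op}$ if necessary — legitimate because $\intresgldim$ is self-dual (the duality $D=\Hom_k(-,k)$ interchanges $\mathscr{I}_P$ with $\mathscr{I}_{P^{\op}}$ and interval resolutions with interval coresolutions, and since $\mathscr{I}_P$ contains every indecomposable projective by Proposition~\ref{prop:proj as interval} and dually every indecomposable injective, the interval-resolution and interval-coresolution global dimensions of $P$ agree) — we may assume $\ell_n$ is a sink, and then the degree-$2$ hypotheses force $\ell_3,\dots,\ell_n$ to be the equioriented chain above, so the same computation applies. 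One remaining orientation of case (b) makes $P'$ fail to be an interior system; there one uses the dual notion of a closure system, or a short direct argument, and I would treat it separately.

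By Corollary~\ref{cor:reduce_to_aligned} together with monotonicity, it suffices to show that every indecomposable $M\in\rep P$ lies in $\mathcal{L}_Q$ for some aligned interior system $Q\subseteq P$ with $\intresgldim Q\le\intresgldim P'$, since then $\intresdim_P M=\intresdim_Q(\Cont_Q M)\le\intresgldim Q\le\intresgldim P'$ by Theorem~\ref{thm:Ind-intervalcover}(b). To produce $Q$, restrict $M$ to the segment $L$, which is a linear (type $A_n$) poset, so $M|_L$ decomposes into interval modules $\mathbb{I}_{[\ell_a,\ell_b]}$. Because $\ell_2,\dots,\ell_{n-1}$ have only their two chain-neighbours, any summand $\mathbb{I}_{[\ell_a,\ell_b]}$ of $M|_L$ with $2\le a$ and $b\le n-1$ splits off as a direct summand of $M$ itself; since $M$ is indecomposable, either $M$ is supported inside $L$ — and is then an interval module over $P$, so $\intresdim_P M=0$ — or every summand of $M|_L$ meets $\{\ell_1,\ell_n\}$, i.e. has $a=1$ or $b=n$. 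In the latter case I would locate a sub-chain $\{\ell_i,\dots,\ell_j\}\subseteq\{\ell_3,\dots,\ell_n\}$ on which all structure maps of $M$ are isomorphisms, long enough that contracting it yields a poset $Q$ still carrying an $A_m$-segment with $3\le m<n$ satisfying the same hypotheses and with $M\in\mathcal{L}_Q$; the inductive hypothesis applied to $Q$ gives $\intresgldim Q=\intresgldim P'$ (the poset obtained by deleting $\ell_4,\dots,\ell_n$ from $P$ coincides with the one obtained by first contracting and then deleting). When $n=4$ there is no room to shrink further and one argues directly: if $M$ is constant on the single fibre $\{\ell_3,\ell_4\}$ then $M\in\mathcal{L}_{P'}$ and we conclude by Theorem~\ref{thm:Ind-intervalcover}; otherwise the unit $M\to\Ind_{P'}\Cont_{P'}M$ has kernel and cokernel supported on $\{\ell_3,\ell_4\}$, hence interval-decomposable over $P$ (as $\{\ell_3,\ell_4\}$, $\{\ell_3\}$, $\{\ell_4\}$ are intervals of $P$), and one reads off $\intresdim_P M\le\intresgldim P'$ by comparing interval resolutions along these short exact sequences.

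The main obstacle is exactly this last case analysis: showing that an indecomposable $M$ not supported on the segment is always constant on a sub-chain long enough to feed the inductive step, and handling the base case $n=4$ (and the deviating orientation of case (b)) where the reduction bottoms out. This is where the hypothesis $n\ge 4$ — i.e., that three vertices $\ell_1,\ell_2,\ell_3$ of the segment survive in $P'$ — is genuinely needed: with a shorter retained stub the comparison between the interval resolution of $M$ and that of its contraction need not preserve the relevant dimensions.
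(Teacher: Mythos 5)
Your proposal has the right structural ingredients (aligned interior systems, Theorem~\ref{thm:Ind-intervalcover}, Corollary~\ref{cor:reduce_to_aligned}) but two of the load-bearing steps are gaps the paper specifically circumvents, so the argument as written does not go through.

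First, for part (a), you propose to show that \emph{every} indecomposable $M\in\rep P$ with $\intresdim M=\intresgldim P$ lies in some $\mathcal{L}_Q$ for an aligned interior system $Q$, by restricting $M$ to the segment $L$ and locating a pair of consecutive elements $\ell_i,\ell_{i+1}\in\{\ell_3,\dots,\ell_n\}$ on which $M$ is an isomorphism. You correctly note that after discarding interval summands of $M|_L$ supported strictly inside $L$, every remaining summand of $M|_L$ has the form $[\ell_1,\ell_b]$ or $[\ell_a,\ell_n]$. But that gives no bound on the number of distinct endpoints $b$ and startpoints $a$, so nothing prevents \emph{every} position $i\in\{3,\dots,n-1\}$ from being a ``break'' position where $M(\ell_i\le\ell_{i+1})$ fails to be an isomorphism. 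There is no obvious reason such an $M$ cannot be indecomposable, and you flag this yourself as the main obstacle — it is in fact the gap. The paper avoids having to control arbitrary indecomposables by invoking Proposition~\ref{prop:formulas}, which replaces $\sup_M\intresdim M$ by the sup over the far more rigid modules $\Gamma_S$ (the kernels of irreducible-morphism sums). For those, Proposition~\ref{prop:irred_interval} pins down the possible shapes of the intervals $S_\alpha$, and a finite case analysis (cases (I)--(IV) in the paper, with $n=4$ by induction) shows each $\Gamma_S$ lies in one of the three aligned interior systems $Q_i=P\setminus\{\ell_i\}$ for $i\in\{2,3,4\}$. Your single $Q=P'$ is not enough: you implicitly need several interior systems, one for each shape of $\Gamma_S$.

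Second, for part (b), passing to $P^{\op}$ so that $\ell_n$ becomes a sink does not make $L$ equioriented — the degree-$2$ conditions on $\ell_2,\dots,\ell_{n-1}$ permit arbitrary sink/source alternation along the segment (e.g.\ $\ell_3\to\ell_4\leftarrow\ell_5\to\dots$), and duality alone cannot remove the alternation. The paper handles this with iterated APR-reflections at the interior sinks and sources of $L$ (Proposition~\ref{prop:mutation_poset}), which is where the leaf hypothesis on $\ell_n$ is actually used: it guarantees the disconnection hypothesis of that proposition is met so each reflection preserves $\intresgldim$, and after finitely many reflections $L$ becomes equioriented and part (a) applies. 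Your ``deviating orientation treated separately'' comment is a placeholder for exactly this mechanism, which is not a short side-case but the core of part (b).

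The observation that $P'$ itself is an aligned interior system of $P$ (case (a)) is correct and is a nice remark, but the paper deliberately does not rely on it; it uses the smaller steps $Q_i$ together with the $\Gamma_S$ reduction. Your monotonicity remark via \cite{AET} is fine but not actually needed once one uses Corollary~\ref{cor:reduce_to_aligned} and the fact that all $Q_i$ are isomorphic to $P'$.
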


The operations in Theorem \ref{thm:cont chains} can be illustrated in the following figure, where $P$ and $P'$ have the same interval resolution global dimensions. 
\begin{equation}
\begin{tabular}{ccccccc}
    &$P$& \quad \quad \quad \quad &$P'$ \\
    (a) &$\xymatrix@C=15pt@R=12pt{
    \ar[dr] &&&&& \ar@{<-}[dl] \\ 
    \ar[r] &\ell_1 \ar@{->}[r] & \ell_2 \ar@{->}[r]
    & \cdots \cdots \ar@{->}[r]  & \ell_{n} \ar[r] &\\
    \ar[ur] &&&&&\ar@{<-}[ul]}$   
    &  &
    $\xymatrix@C=15pt@R=12pt{
    \ar[dr] &&&& \ar@{<-}[dl] \\ 
    \ar[r] &\ell_1 \ar@{->}[r] & \ell_2 \ar@{->}[r]
     & \ell_{3} \ar[r] &\\ 
    \ar[ur] &&&&\ar@{<-}[ul]}$ 
    \\ \\ 
    (b)& 
    $\xymatrix@C=15pt@R=12pt{
    \ar[dr] &&&&&  \\ 
    \ar[r] &\ell_1 \ar@{-}[r] & \ell_2 \ar@{-}[r]
    & \cdots \cdots \ar@{-}[r]  & \ell_{n} &\\
    \ar[ur] &&&&&}$   
    &  &
    $\xymatrix@C=15pt@R=12pt{
    \ar[dr] &&&&  \\ 
    \ar[r] &\ell_1 \ar@{-}[r] & \ell_2 \ar@{-}[r]
     & \ell_{3}  &\\ 
    \ar[ur] &&&&}$    
\end{tabular}
\end{equation}

We will prove claims (a) and (b) of Theorem \ref{thm:cont chains} in Subsections \ref{subsec:Thm_chain(a)} and \ref{subsec:Thm_chain(b)} respectively. 
To do this, we need knowledge of representation theory of finite dimensional algebras, especially the Auslander-Reiten theory and tilting theory. 
We refer to \cite[Chapters IV,VI]{ASS} for these materials. 
Indeed, it is applicable to the study of persistence modules through incidence algebras as follows. 

Let $P$ be a finite poset and $\Hasse(P)$ the Hasse diagram of $P$. 
The \emph{incidence algebra} $k[P]$ of $P$ is defined by a quotient of a path algebra of $\Hasse(P)$ over $k$ modulo the two-sided ideal generated by all the commutative relations. 
It is well-known that there is an equivalence of abelian categories between 
the category $\rep P$ ($=\fprep P$) of pfd $P$-persistence modules and the category $\mod k[P]$ of finitely generated right $k[P]$-modules. 
Thus, we will identify the two categories. 
Under this identification, the classes of projectives, injectives, simples, indecomposables, etc. coincide in both categories. 

We correct some notations. 
For an element $x\in P$, 
let $\rS_x := \mathbb{I}_{\{x\}}$ be the simple module corresponding to $x$. 
In addition, let $\rP_x := \mathbb{I}_{x^{\uparrow}}$ and $\rI_x := \mathbb{I}_{x^{\downarrow}}$ be the indecomposable projective and indecomposable injective modules corresponding to $x$ respectively. 
Next, for a given pfd $P$-persistence module $M$, we recall that the projective dimension $\projdim M$ of $M$ is defined to be the length of a minimal projective resolution of $M$. 
The injective dimension $\injdim M$ is defined dually. 
Finally, we denote by $\tau$, $\tau^{-1}$ the Auslander-Reiten translations of $\rep P$ (through the equivalence with $\mod k[P]$). 
They give a correspondence 
mapping $M$ to $\tau M$ and $\tau^{-1}M$ respectively in $\rep P$, and $\tau M = 0$ (resp., $\tau^{-1}M =0)$ if and only if $M$ is projective (resp., injective).

\subsection{Proof of Theorem \ref{thm:cont chains}(a)} \label{subsec:Thm_chain(a)}
Firstly, we recall some formulas for computing interval resolution global dimensions over finite posets. 
In particular, we use a description of irreducible morphisms relative to intervals.

Let $P$ be a finite poset. We describe intervals of $P$ by using the following notations due to \cite{BBH1}. 

\begin{definition}\label{def:finite intervals}
For two subsets $A,B\subseteq P$, we define convex subsets
\begin{enumerate}[\rm (1)]
    \item ${\langle} A,B {\rangle} := A^{\uparrow}\cap B^{\downarrow}$. 
    \item ${\langle} A,B{\langle} := 
    A^{\uparrow}\setminus B^{\uparrow} 
    $.
    \item ${\rangle} A,B{\rangle} :=  
    B^{\downarrow}\setminus A^{\downarrow} 
   $.     
\end{enumerate}
\end{definition}

It is clear that every interval $S\in \Int(P)$ can be written as $S= {\langle} \min(S),\max(S) {\rangle}$ since $P$ is finite. 

Next, we discuss irreducible morphisms between interval modules. 
For two intervals $S,T \in \Int(P)$, a morphism $\alpha \colon \mathbb{I}_T \to \mathbb{I}_S$ is said to be \emph{irreducible} (relative to interval modules) if it is neither a section nor a retraction 
and any factorization of $\alpha$ of the form $\mathbb{I}_T \xrightarrow{\beta} M \xrightarrow{\gamma} \mathbb{I}_S$ with interval-decomposable $M$ implies that $\beta$ is a section or $\gamma$ is a retraction. 
In this situation, we clearly have $S\neq T$. 

\begin{proposition} \label{prop:irred_interval}
\cite[Proposition 6.8]{BBH2}
    Let $T$ and $S$ be intervals of $P$.
    Then, we have an irreducible morphism $\alpha\colon \mathbb{I}_{T} \to \mathbb{I}_{S}$ if and only if one of the following holds. 
    \begin{enumerate}[\rm (a)]
        \item $\alpha$ is surjective and there exists $x\in P$ such that $S \lessdot x$ (Definition \ref{def:Hasse}) and $T = S \cup {\rangle} S, x {\rangle}$. 
        \item $\alpha$ is injective and there exists $z\in \min(S)$ such that $S = T \cup {\langle} z, T {\langle}$ and $T$ is a connected component of 
        $S\setminus \{z\}$. 
    \end{enumerate}
    Moreover, such an irreducible morphism $\alpha$ is given by (up to scalar multiplication)
    \begin{equation*}
        \alpha(x) = \begin{cases}
            \id_k & \text{if $x\in S\cap T$,} \\ 
            0 & \text{otherwise.}            
        \end{cases}
    \end{equation*}

\end{proposition}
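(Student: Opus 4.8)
The plan is to describe all morphisms between interval modules explicitly, read off the diagonal shape of any irreducible morphism together with an injective/surjective dichotomy, and then do a combinatorial analysis in each of the two cases. Concretely, a morphism $\varphi\colon\mathbb{I}_T\to\mathbb{I}_S$ is a natural transformation, hence given by a scalar $c_p\in k$ at each $p\in T\cap S$ and by $0$ elsewhere, and the naturality squares force $c_p=c_q$ for comparable $p,q\in T\cap S$, $c_q=0$ whenever $p\le q$ with $p\in T\setminus S$ and $q\in T\cap S$, and $c_p=0$ whenever $p\le q$ with $p\in T\cap S$ and $q\in S\setminus T$. Hence the support $U:=\{p:c_p\neq 0\}$ is a union of connected components of $T\cap S$ that is at the same time a down-set of $T$ and an up-set of $S$; on $U$ the map $\varphi$ is a scalar multiple of the diagonal map (the identity on $U$); and $\varphi$ factors canonically as $\mathbb{I}_T\twoheadrightarrow\mathbb{I}_U\hookrightarrow\mathbb{I}_S$, a diagonal epimorphism followed by a diagonal monomorphism. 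In particular $\dim_k\Hom(\mathbb{I}_T,\mathbb{I}_S)\le 1$ when $T\cap S$ is connected.

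Now let $\alpha\colon\mathbb{I}_T\to\mathbb{I}_S$ be irreducible; then $\alpha\neq 0$ and $T\neq S$. Feeding the factorization above into the defining property (the middle term $\mathbb{I}_U$ is interval-decomposable), either $\mathbb{I}_T\twoheadrightarrow\mathbb{I}_U$ is a section or $\mathbb{I}_U\hookrightarrow\mathbb{I}_S$ is a retraction; since $\mathbb{I}_T$, $\mathbb{I}_S$ are indecomposable and a section that is epic (resp.\ a retraction that is monic) is an isomorphism, this forces $U=T$ (so $T\subseteq S$, $\alpha$ is the inclusion, and $T$ is an up-set of $S$) or $U=S$ (so $S\subseteq T$, $\alpha$ is the quotient, and $S$ is a down-set of $T$). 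Either way $\alpha$ is the asserted diagonal, which gives the ``moreover'' clause and the injective/surjective dichotomy. The two cases match under the vector-space duality $D\colon\rep(P)^{\op}\xrightarrow{\ \sim\ }\rep(P^{\op})$, which carries interval modules to interval modules, swaps epimorphisms and monomorphisms, preserves irreducibility relative to interval modules, and turns ${\rangle}A,B{\rangle}$ in $P$ into ${\langle}B,A{\langle}$ in $P^{\op}$; so it suffices to treat the surjective case.

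In the surjective case $\alpha\colon\mathbb{I}_T\twoheadrightarrow\mathbb{I}_S$ with $S$ a down-set of $T$ and kernel $\mathbb{I}_R$, where $R:=T\setminus S$ is an up-set of $T$. \emph{Forward implication.} (i) If $R$ is disconnected, pick a connected component $D$; then $S\cup D$ and $S\cup(R\setminus D)$ are intervals (convexity is inherited from $T$ since $S$ is a down-set, connectedness because $S$ meets each piece), and $\alpha$ factors through $\mathbb{I}_{S\cup D}\oplus\mathbb{I}_{S\cup(R\setminus D)}$ by two diagonal quotients followed by the diagonal quotient out of the first summand, contradicting irreducibility; so $R$ is connected. (ii)~$R$ must have a maximum element $x$, which then satisfies $S\lessdot x$. (iii)~$R={\rangle}S,x{\rangle}=x^{\downarrow}\setminus S^{\downarrow}$, i.e.\ $T=S\cup{\rangle}S,x{\rangle}$. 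As in (i), each failure of (ii) or (iii) gets converted into an explicit nontrivial factorization of $\alpha$ through a direct sum of two interval modules. \emph{Converse.} If $T=S\cup{\rangle}S,x{\rangle}$ with $S\lessdot x$, then $S$ is connected, so $\Hom(\mathbb{I}_T,\mathbb{I}_S)=k\alpha$, and by the $\rad/\rad^2$ criterion for irreducibility in the Krull--Schmidt category $\add\{\mathbb{I}_J\}_{J\in\Int(P)}$ it remains to show that $gf=0$ for every interval $J$ and every pair of non-isomorphisms $\mathbb{I}_T\xrightarrow{f}\mathbb{I}_J\xrightarrow{g}\mathbb{I}_S$. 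Here the point is that $S\lessdot x$ forces every element of ${\rangle}S,x{\rangle}$ other than $x$ to be incomparable to all of $S$; together with the diagonal description of $f$ and $g$ this shows that no interval $J$ with $S\subsetneq J$ and $J\neq T$ can admit such $f,g$ with $gf\neq 0$, so $gf=0$ and $\alpha$ is irreducible. The injective case follows by the same argument over $P^{\op}$.

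I expect the main obstacle to be the combinatorial heart of the surjective case, namely steps (ii)--(iii) and their converse: extracting the exact shape $T=S\cup{\rangle}S,x{\rangle}$ with $S\lessdot x$ from the abstract irreducibility hypothesis, and conversely verifying that precisely this shape blocks every two-step factorization. The work lies in the bookkeeping---turning each possible deviation from this shape into an explicit factorization through a direct sum of two interval modules, and repeatedly checking that the auxiliary subsets arising are genuinely intervals (convex and connected).
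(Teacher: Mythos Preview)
The paper does not prove this proposition; it simply cites \cite[Proposition~6.8]{BBH2}. So there is no in-paper argument to compare against, and your proposal stands as an independent proof.

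Your overall architecture is sound. The pointwise description of $\Hom(\mathbb{I}_T,\mathbb{I}_S)$, the canonical epi--mono factorisation through $\mathbb{I}_U$, and the resulting injective/surjective dichotomy for irreducible maps are all correct. The duality reduction is also legitimate: one checks (as you implicitly need) that ``$S\lessdot x$ and $T=S\cup{\rangle}S,x{\rangle}$'' is equivalent to ``$x\in\max(T)$, $T=S\cup{\rangle}S,x{\rangle}$, and $S$ is a connected component of $T\setminus\{x\}$'', and this latter formulation is exactly the $P^{\op}$-translation of condition~(b). Your key combinatorial observation for the converse---that $S\lessdot x$ forces every $y\in{\rangle}S,x{\rangle}\setminus\{x\}$ to be incomparable with all of $S$---is correct and is precisely what makes the $\rad/\rad^2$ argument go through: from $gf\neq 0$ one first extracts $S\subseteq J$ with $S$ a down-set of $J$, then the incomparability forces $x\in J$, then the down-set property of $U_f$ in $T$ drags all of $R={\rangle}S,x{\rangle}$ into $J$, giving $T\subseteq J$, and finally convexity of $J$ together with $R={\rangle}S,x{\rangle}$ blocks $J\supsetneq T$.

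The one place to be careful is the forward implication, step~(ii). As written, ``$R$ has a maximum $x$, which then satisfies $S\lessdot x$'' reads as if $S\lessdot x$ is automatic once $R$ is connected with maximum $x$; it is not. You must use irreducibility again here (e.g.\ if some $z\in R$ with $z<x$ lies in $S^{\uparrow}$, factor $\alpha$ through the interval $S\cup(R\cap z^{\downarrow})$ or through the component of $T\setminus\{x\}$ containing $S$). You flag exactly this region as the expected obstacle, so this is a matter of carrying out the bookkeeping rather than a missing idea; just be aware that (ii) and (iii) are not separate automatic consequences of (i) but each require their own factorisation argument.
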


For an interval $S\subseteq P$, let $\mathcal{A}_S$ be the set of all irreducible morphisms $\alpha\colon \mathbb{I}_{S_{\alpha}} \to \mathbb{I}_S$ with intervals $S_{\alpha}\in \Int(P)$. 
Notice that it is a finite set since $P$ is finite. 
Then, we define $\Gamma_S$ in $\rep P$ by an exact sequence 
\begin{equation}\label{eq:GammaS_exact}
    0 \to \Gamma_S \to \bigoplus_{\alpha\in \mathcal{A}_S} \mathbb{I}_{S_{\alpha}} \xrightarrow{(\alpha)_{\alpha\in \mathcal{A}_S}} \mathbb{I}_S. 
\end{equation}

We summarize below known formulas for computing the interval resolution global dimension of a given finite poset $P$. 
In fact, it is a direct consequence of \cite[Proposition 3.15]{AENY}. 

\begin{proposition}\label{prop:formulas}
    For a finite poset $P$, we have the equalities among all the following values. 
\begin{enumerate}[\rm (i)]
    \item $\intresgldim P$.
    \item $\sup\{\intresdim \tau \mathbb{I}_S\mid S\in \Int(P)\}$.
    \item $\sup\{\intresdim \Gamma_S \mid S\in \Int(P)\}$.
    \item $\intresgldim P^{\op}$.
\end{enumerate}
    Moreover, all the above values are always finite. 
\end{proposition}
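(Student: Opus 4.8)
The plan is to derive this from \cite[Proposition 3.15]{AENY}, which (as stated in the excerpt) should already provide the computation of $\intresgldim P$ in terms of interval resolution dimensions of certain canonical modules. The essential point is to identify the ``test modules'' one needs: on the one hand the Auslander--Reiten translates $\tau\mathbb{I}_S$, and on the other hand the kernels $\Gamma_S$ of the minimal right $\add(\bigoplus_S\mathbb{I}_S)$-approximations encoded by \eqref{eq:GammaS_exact}. First I would recall that, since $P$ is finite, $\rep P\simeq\mod k[P]$ is a category of finitely generated modules over a finite-dimensional algebra, so Auslander--Reiten theory applies and $\mathscr{I}_P=\add\{\mathbb{I}_S\mid S\in\Int(P)\}$ is a functorially finite subcategory containing all projectives; hence every $M$ admits an interval resolution and all the dimensions in sight are finite. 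This gives the last sentence of the statement essentially for free once the equalities are in place.

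Next I would establish (i)$=$(iii). By definition $\intresgldim P=\sup_M\intresdim M$, and the standard dimension-shifting argument for relative resolutions (using that $\mathscr{I}_P$ contains the projectives, so the first syzygy relative to $\mathscr{I}_P$ of any $M$ already lies in the kernel of an $\mathscr{I}_P$-cover) reduces the supremum to modules of the form $\ker(\varphi\colon V\to M)$ with $V\to M$ an interval cover and $M\in\mathscr{I}_P$; by additivity it suffices to take $M=\mathbb{I}_S$ indecomposable. The minimal right $\mathscr{I}_P$-approximation of $\mathbb{I}_S$ is built from the irreducible maps into $\mathbb{I}_S$ described in Proposition \ref{prop:irred_interval}, i.e.\ it is exactly the map $\bigoplus_{\alpha\in\mathcal{A}_S}\mathbb{I}_{S_\alpha}\to\mathbb{I}_S$ appearing in \eqref{eq:GammaS_exact}, whose kernel is $\Gamma_S$. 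Thus $\intresdim\mathbb{I}_S=\intresdim\Gamma_S+1$ when $\Gamma_S\neq0$, and in any case $\sup_S\intresdim\mathbb{I}_S$ and $\sup_S\intresdim\Gamma_S$ determine each other, giving (i)$=$(iii). For (i)$=$(ii), I would invoke the other half of \cite[Proposition 3.15]{AENY}: there the relevant syzygy/cosyzygy bookkeeping is phrased through the AR-translation, and $\tau\mathbb{I}_S$ and $\Gamma_S$ are related by the AR-quiver combinatorics of $\rep P$ so that their interval resolution dimensions have the same supremum over $S$. Concretely one compares the almost split sequence ending at $\mathbb{I}_S$ (or starting at $\tau\mathbb{I}_S$) with the approximation sequence \eqref{eq:GammaS_exact}; the maps $\mathbb{I}_{S_\alpha}\to\mathbb{I}_S$ are precisely the irreducible maps, so the two sequences differ only by the projective/non-interval summands that do not affect the interval resolution dimension.

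Finally, for (i)$=$(iv) I would use the duality $D=\Hom_k(-,k)\colon\rep P\to\rep P^{\op}$, which is exact, sends $\mathbb{I}_S$ to $\mathbb{I}_S$ (viewed in $P^{\op}$), and hence carries $\mathscr{I}_P$ to $\mathscr{I}_{P^{\op}}$ and interval covers to interval envelopes. Interval resolution dimension is not literally self-dual, but combining $D$ with the equivalence $\sup_S\intresdim\mathbb{I}_S=\sup_S\intresdim\Gamma_S$ and its $P^{\op}$-analogue (together with the fact that the irreducible morphisms of Proposition \ref{prop:irred_interval} come in the two dual families (a) and (b), interchanged by $D$) yields $\intresgldim P=\intresgldim P^{\op}$. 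I expect the main obstacle to be the precise matching in (i)$=$(ii): one has to check carefully that passing from the almost split sequence to the $\mathscr{I}_P$-approximation sequence only introduces or removes summands $\mathbb{I}_{a^{\uparrow}}$ and other modules that are themselves in $\mathscr{I}_P$ (hence of interval resolution dimension $0$), so that no dimension is lost or gained in the supremum; this is exactly the content one is meant to extract from \cite[Proposition 3.15]{AENY}, and I would lean on that reference rather than reprove the AR-theoretic identity from scratch.
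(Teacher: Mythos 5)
Your overall strategy—lean on \cite[Proposition 3.15]{AENY} and the endomorphism algebra / Auslander-algebra picture—matches the paper's, but two of your concrete steps are wrong, and they matter.

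The central error is in your argument for (i)$=$(iii). You claim that the minimal right $\mathscr{I}_P$-approximation of $\mathbb{I}_S$ is the map $\bigoplus_{\alpha\in\mathcal{A}_S}\mathbb{I}_{S_\alpha}\to\mathbb{I}_S$ of \eqref{eq:GammaS_exact}, with kernel $\Gamma_S$. But $\mathbb{I}_S$ already lies in $\mathscr{I}_P$, so its interval cover is the identity map $\mathbb{I}_S\to\mathbb{I}_S$ and the kernel is $0$; the map $\bigoplus_\alpha\mathbb{I}_{S_\alpha}\to\mathbb{I}_S$ is not a right $\mathscr{I}_P$-approximation, since the identity does not factor through it. The correct role of $\Gamma_S$ is as the \emph{second} syzygy of the simple module over $B_P=\End_P(G_P)^{\rm op}$ at the vertex $S$, where $G_P=\bigoplus_{S\in\Int(P)}\mathbb{I}_S$: the map $\bigoplus_\alpha\mathbb{I}_{S_\alpha}\to\mathbb{I}_S$ is the right minimal almost split morphism inside $\add G_P$, and under $\Hom_P(G_P,-)$ it becomes the projective presentation of the simple $B_P$-module. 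The paper derives (i)$=$(iii) by this projectivization together with the formula $\gldim B_P-2=\intresgldim P$ from \cite[Proposition 3.15]{AENY}; your dimension-shifting argument over $\rep P$ does not carry through as written.

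A second issue is your claim that finiteness of all the quantities follows "essentially for free" from $\mathscr{I}_P$ being a functorially finite subcategory containing the projectives. That guarantees existence of interval covers, hence interval resolutions, but it does not guarantee that those resolutions terminate: relative global dimensions with respect to functorially finite subcategories containing projectives can be infinite. Finiteness here is a nontrivial consequence of $G_P$ being a generator-cogenerator, so that $\gldim B_P<\infty$; this is precisely part of the content of \cite[Proposition 3.15]{AENY} and cannot be waved away. Finally, the equality you quote in your (i)$=$(iv) argument, $\sup_S\intresdim\mathbb{I}_S=\sup_S\intresdim\Gamma_S$, has left-hand side $0$ and is not a usable identity; the paper instead proves (i)$=$(iv) cleanly from $\gldim B_P=\gldim B_P^{\rm op}$ together with the $k$-algebra isomorphism $B_{P^{\op}}\cong B_P^{\rm op}$.
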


In particular, we obtain the following formula. 

\begin{lemma}\label{lem:supQS}
    If every interval $S\in \Int(P)$ admits an aligned interior system $Q_S$ of $P$ such that $\Gamma_S \in \mathcal{L}_{Q_S}$, then we have $\intresgldim P = \sup_{S\in \Int(P)}\intresgldim Q_S$.  
\end{lemma}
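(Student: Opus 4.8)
The plan is to deduce this immediately from Corollary~\ref{cor:reduce_to_aligned}, feeding it the interval-theoretic formula of Proposition~\ref{prop:formulas}. Throughout this section $P$ is a finite poset, so $\fprep P = \rep P$ and every object of $\fprep P$ admits an interval resolution; hence the standing hypothesis of Corollary~\ref{cor:reduce_to_aligned} is satisfied. The candidate collection is
\begin{equation}
    \mathcal{X} := \{\, \Gamma_S \mid S \in \Int(P) \,\} \subseteq \fprep P,
\end{equation}
which is a finite set since $\Int(P)$ is finite, and each $\Gamma_S$ lies in $\fprep P = \rep P$ because it is the kernel of a morphism between finite direct sums of interval modules by \eqref{eq:GammaS_exact}.

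Next I would check that $\mathcal{X}$ meets the two bullet conditions of Corollary~\ref{cor:reduce_to_aligned}. The first condition, $\intresgldim P = \sup_{X\in \mathcal{X}} \intresdim X$, is exactly the equality of the quantities (i) and (iii) in Proposition~\ref{prop:formulas}, namely $\intresgldim P = \sup\{\intresdim \Gamma_S \mid S\in \Int(P)\}$. The second condition asks that for every $X \in \mathcal{X}$ there be an aligned interior system $Q_X \subseteq P$ with $X \in \mathcal{L}_{Q_X}$; taking $Q_{\Gamma_S} := Q_S$, this is precisely the hypothesis of the lemma, that each $\Gamma_S$ lies in $\mathcal{L}_{Q_S}$ for an aligned interior system $Q_S$. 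Applying Corollary~\ref{cor:reduce_to_aligned} then gives
\begin{equation}
    \intresgldim P = \sup_{X\in \mathcal{X}} \intresgldim Q_X = \sup_{S\in \Int(P)} \intresgldim Q_S,
\end{equation}
as claimed.

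I do not expect a real obstacle here: the substantive work is already contained in Corollary~\ref{cor:reduce_to_aligned} (hence ultimately in Theorem~\ref{thm:Ind-intervalcover}(c), which transports interval resolutions along $\Ind_Q$ and compares global dimensions) and in the reduction of $\intresgldim P$ to the modules $\Gamma_S$ provided by Proposition~\ref{prop:formulas}. The only points deserving a word of care are bookkeeping ones: that $\Int(P)$ and each $Q_S$ are finite (the latter since $Q_S$ is a subset of the finite poset $P$), so that the hidden hypothesis "every object of $\fprep Q_S$ admits an interval resolution" needed when Corollary~\ref{cor:reduce_to_aligned} invokes Theorem~\ref{thm:Ind-intervalcover}(c) for each $Q_S$ holds automatically, and that each $\Gamma_S$ is genuinely an object of $\fprep P$ so that it is a legitimate member of the collection $\mathcal{X}$.
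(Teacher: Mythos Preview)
Your proposal is correct and follows essentially the same route as the paper, which simply says the lemma is ``straightforward from Corollary~\ref{cor:reduce_to_aligned} and Proposition~\ref{prop:formulas}.'' You have just made explicit the choice $\mathcal{X}=\{\Gamma_S\mid S\in\Int(P)\}$ and the verification of the two bullets, which is exactly what the paper leaves implicit.
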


\begin{proof}
    This is straightforward from Corollary \ref{cor:reduce_to_aligned} and Proposition \ref{prop:formulas}. 
\end{proof}

Under these notation, we prove Theorem \ref{thm:cont chains}(a). 

\begin{proof}[Proof of Theorem \ref{thm:cont chains}(a)]
    By induction on $n$, it suffices to show the case $n = 4$.
    Thus, let $L$ be an equioriented $A_4$-type segment 
    $L=(\ell_1\to \ell_2 \to \ell_3\to \ell_4)$ as in Definition \ref{def:An-type segment}. 
    For $i\in \{2,3,4\}$, let $Q_i:=P\setminus \{\ell_i\}$ be a full subposet of $P$. 
    Then, $Q_i$ is an aligned interior system of $P$. 
    In fact, an inclusion map $Q_i\hookrightarrow P$ has a right adjoint 
    \begin{equation}  
        \lfloor a \rfloor_{Q_i} := 
        \begin{cases} 
        \ell_{i-1} & \text{if $a = \ell_i$,} \\  
        a & \text{otherwise},
    \end{cases}
    \quad \text{with} \quad 
    \lceil y \rceil_{Q_i} = \begin{cases}
        \{\ell_{i-1},\ell_i\} & \text{if $y=\ell_{i-1}$,} \\
        \{y\} & \text{otherwise,}
    \end{cases}
    \end{equation}
    for any $a\in P$ and $y\in Q_i$. 
    In addition, $Q_i$ clearly satisfies the conditions (AL1) and (AL2) in Definition \ref{def:aligned intersys}. 
    Therefore, we have an adjoint pair $\Cont_{Q_i} \dashv \Ind_{Q_i} \colon \rep P \rightleftarrows \rep Q_i$. 
    We write $\mathcal{L}_i := \mathcal{L}_{Q_i}$ for the essential image of $\Ind_{Q_i}$ on $\rep Q_i=\fprep Q_i$. 
    By \eqref{eq:Ind as floor}, a given pfd $P$-persistence module $M$ belongs to $\mathcal{L}_{i}$ if and only if 
    $M(\ell_{i-1}\leq \ell_i)$ is an isomorphism.

    The poset $P'$ in the statement is naturally isomorphic to $Q_i$'s as posets by our construction. 
    So, we write $q$ for their interval resolution global dimensions. 
    
    We claim that, for every $S\in \Int(P)$, 
    we have $\Gamma_S\in \mathcal{L}_{i}$ for some $i\in \{2,3,4\}$.
    After that, we obtain the desired equality by
    \begin{equation}
        \intresgldim P \overset{\rm Lem.\,\ref{lem:supQS}}{=} 
        \sup_{i\in \{2,3,4\}} \intresgldim Q_i = q.  
    \end{equation}

    We use the following observation. 
    Now, we recall that 
    $\mathcal{A}_S$ is the set of irreducible morphisms terminated at $\mathbb{I}_S$ for $S\in \Int(S)$ and of the form $\alpha \colon \mathbb{I}_{S_{\alpha}} \to \mathbb{I}_S$ with $S_{\alpha}\in \Int(P)$.
    
\begin{lemma}\label{lem:Gamma_S_where}
    Let $S\in \Int(P)$ be an interval and $i\in \{2,3,4\}$. 
    Then, $\Gamma_S$ belongs to $\mathcal{L}_i$ if the following condition is satisfied. 
    \begin{itemize}
        \item For any $T\in \{S\}\cup \{S_{\alpha}\}_{\alpha\in \mathcal{A}_S}$, 
        we have $\ell_{i-1}\in T$ if and only if $\ell_i\in T$. 
    \end{itemize}     
\end{lemma}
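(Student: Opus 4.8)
The plan is to use the description of $\mathcal{L}_i$ recorded just before the statement: a pfd $P$-persistence module $M$ belongs to $\mathcal{L}_i$ if and only if the structure map $M(\ell_{i-1}\leq\ell_i)$ is an isomorphism. For an interval module this becomes a purely combinatorial condition: $\mathbb{I}_T(\ell_{i-1}\leq\ell_i)$ equals $\id_k$ when $\ell_{i-1},\ell_i\in T$, is the zero map $0\to 0$ when $\ell_{i-1},\ell_i\notin T$, and is otherwise a non-isomorphism between $k$ and $0$; hence $\mathbb{I}_T\in\mathcal{L}_i$ exactly when $\ell_{i-1}\in T\Leftrightarrow\ell_i\in T$. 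Thus the hypothesis of the lemma says precisely that $\mathbb{I}_S$ and every $\mathbb{I}_{S_\alpha}$ with $\alpha\in\mathcal{A}_S$ lie in $\mathcal{L}_i$. Since a finite direct sum of modules whose structure map along $\ell_{i-1}\leq\ell_i$ is an isomorphism again has this property, the middle term $B:=\bigoplus_{\alpha\in\mathcal{A}_S}\mathbb{I}_{S_\alpha}$ of the defining sequence \eqref{eq:GammaS_exact} also lies in $\mathcal{L}_i$.

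It then remains to show that $\Gamma_S=\ker\psi$ lies in $\mathcal{L}_i$, where $\psi=(\alpha)_{\alpha\in\mathcal{A}_S}\colon B\to\mathbb{I}_S$. I would check directly that $\Gamma_S(\ell_{i-1}\leq\ell_i)$ is an isomorphism: it is the restriction of the isomorphism $B(\ell_{i-1}\leq\ell_i)$ to $\Gamma_S(\ell_{i-1})\to\Gamma_S(\ell_i)$, hence injective, and for surjectivity, given $v\in\Gamma_S(\ell_i)$ one takes the unique preimage $u\in B(\ell_{i-1})$ of $v$ under $B(\ell_{i-1}\leq\ell_i)$ and uses naturality of $\psi$ together with $\psi(\ell_i)(v)=0$ to get $\mathbb{I}_S(\ell_{i-1}\leq\ell_i)\bigl(\psi(\ell_{i-1})(u)\bigr)=0$; since $\mathbb{I}_S(\ell_{i-1}\leq\ell_i)$ is an isomorphism (as $\mathbb{I}_S\in\mathcal{L}_i$) this forces $\psi(\ell_{i-1})(u)=0$, i.e.\ $u\in\Gamma_S(\ell_{i-1})$. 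Alternatively, one can argue functorially: since $Q_i$ is an aligned interior system, $\Cont_{Q_i}$ is exact by Proposition~\ref{prop:cont is exact} and $\Ind_{Q_i}$ is exact by Proposition~\ref{prop:basic Cont}(3), so applying the exact functor $\Ind_{Q_i}\Cont_{Q_i}$ to the left-exact sequence $0\to\Gamma_S\to B\xrightarrow{\psi}\mathbb{I}_S$ and comparing with it through the unit $\eta$ of the adjunction $\Cont_{Q_i}\dashv\Ind_{Q_i}$ gives a commutative ladder with exact rows in which $\eta_B$ and $\eta_{\mathbb{I}_S}$ are isomorphisms (because $B,\mathbb{I}_S\in\mathcal{L}_i$); a short diagram chase then shows $\eta_{\Gamma_S}$ is an isomorphism, so $\Gamma_S\cong\Ind_{Q_i}\Cont_{Q_i}\Gamma_S\in\mathcal{L}_i$.

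The one genuinely delicate point is this last step. The class $\mathcal{L}_i$ is \emph{not} closed under arbitrary submodules, so it is crucial that the codomain $\mathbb{I}_S$ of $\psi$---and not only the middle term $B$---belongs to $\mathcal{L}_i$; equivalently, in the functorial formulation, one needs the left exactness of $\Cont_{Q_i}$, which is exactly the place where alignedness of $Q_i$ enters (right exactness, which holds for any interior system, would not be enough). Everything else reduces to bookkeeping with the combinatorial description of $\mathcal{L}_i$ in terms of the pair $\ell_{i-1}\leq\ell_i$.
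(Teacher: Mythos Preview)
Your proposal is correct and your first (direct) argument is essentially the paper's proof: the paper writes down the same two-row commutative diagram obtained by evaluating the defining left-exact sequence for $\Gamma_S$ at $\ell_{i-1}$ and $\ell_i$, observes that the middle and right vertical maps are isomorphisms under the hypothesis, and concludes by ``universality of kernels'' that $\Gamma_S(\ell_{i-1}\leq\ell_i)$ is an isomorphism---exactly your injectivity/surjectivity chase. Your alternative functorial argument via exactness of $\Ind_{Q_i}\Cont_{Q_i}$ and the unit of the adjunction is a valid and slightly more conceptual route that the paper does not take, but it proves the same thing with the same essential input (that both $B$ and $\mathbb{I}_S$, not just $B$, lie in $\mathcal{L}_i$).
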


\begin{proof}
    As we mentioned before, $\Gamma_S \in \mathcal{L}_i$ if and only if 
    $\Gamma_S(\ell_{i-1}\leq \ell_i)\colon \Gamma_S(\ell_{i-1})\to \Gamma_S(\ell_i)$ is an isomorphism. 
    By the exact sequence \eqref{eq:GammaS_exact}, we obtain a commutative diagram 
    \begin{equation*}
        \xymatrix@C68pt{
        0 \ar[r] 
        &
        \Gamma_S(\ell_{i-1}) \ar[r] 
        &
        \displaystyle
        \bigoplus_{\alpha\in \mathcal{A}_S} \mathbb{I}_{S_{\alpha}}(\ell_{i-1}) \ar[r]^-{(\alpha(\ell_{i-1}))_{\alpha\in \mathcal{A}_S}} 
        & 
        \mathbb{I}_S(\ell_{i-1}) \\ 
        0 \ar[r] 
        &
        \Gamma_S(\ell_i) \ar[r] \ar@{<-}[u]^{\Gamma_S(\ell_{i-1}\leq \ell_i)} \ar@{}[ur]|{\circlearrowleft}
        & 
        \displaystyle
        \bigoplus_{\alpha\in \mathcal{A}_S} \mathbb{I}_{S_{\alpha}}(\ell_i) \ar[r]^-{(\alpha(\ell_i))_{\alpha\in \mathcal{A}_S}} \ar@{<-}[u]^-{h} \ar@{}[ur]|{\circlearrowleft} 
        & 
        \mathbb{I}_S(\ell_i). \ar@{<-}[u]^{\mathbb{I}_S(\ell_{i-1}\leq \ell_i)}
        }
    \end{equation*}
    where $h$ is given by a diagonal matrix whose diagonal entries are $\mathbb{I}_{S_{\alpha}}(\ell_{i-1}\leq \ell_i)$ for all $\alpha\in \mathcal{A}_S$. 
    
    For $T\in \{S\}\cup \{S_{\alpha}\}_{\alpha\in \mathcal{A}_S}$, 
    we have either $\ell_{i-1},\ell_i\in T$ or 
    $\ell_{i-1},\ell_i\not\in T$ by our assumption. 
    Thus, we get $\mathbb{I}_T(\ell_{i-1}) = \mathbb{I}_T(\ell_i) = k$ and $\mathbb{I}_T(\ell_{i-1}\leq \ell_i) = \id_k$ in the former case, and 
    $\mathbb{I}_T(\ell_{i-1}) = \mathbb{I}_T(\ell_{i})=0$ in the latter case. 
    In particular, $h$ is an isomorphism. 
    Therefore, $\Gamma_S(\ell_{i-1} \leq \ell_i)$ is an isomorphism by the universality of kernels. 
\end{proof}

We continue our proof of (a). 
We divide cases into (I)--(IV) as follows. 

\bigskip \noindent 
(I) We assume that $\ell_2,\ell_3\in S$. 

\begin{itemize}
    \item [(I-1)] We additionally assume that $\ell_1\in S$. 
For any $\alpha\in \mathcal{A}_S$, we have the following. 
    \begin{itemize}
        \item If $\alpha$ is surjective, then $\ell_2,\ell_3\in S_{\alpha}$. 
        This is because $S\subseteq S_{\alpha}$.  
        \item If $\alpha$ is injective, then $\ell_2 \in S_{\alpha}$ if and only if $\ell_3\in S_{\alpha}$. 
        Indeed, since 
        $\ell_2,\ell_3\not\in \min(S)$
        by $\ell_1\in S$, by the shape of $L$, 
        we observe that elements $\ell_2$ and $\ell_3$ always lie in the same connected component of $S\setminus \{z\}$ for any $z\in \min(S)$. 
        Then, we get the claim since $S_{\alpha}$ is one of connected components of this form by Proposition \ref{prop:irred_interval}. 
    \end{itemize}
Thus, the assumption of Lemma \ref{lem:Gamma_S_where} is satisfied with $\Gamma_S\in \mathcal{L}_3$. 

\item [(I-2)]  If $\ell_4\in S$, then we get $\Gamma_S\in \mathcal{L}_4$ by a similar discussion to the case (I-1). 

\item [(I-3)] If $\ell_1,\ell_4\not\in S$, then $S = \{\ell_2,\ell_3\}$ holds. 
In this case, $\mathcal{A}_S$ consists of two morphisms 
$\varphi\colon \mathbb{I}_{\{\ell_3\}} \to \mathbb{I}_S$ and 
$\psi\colon \mathbb{I}_{\{\ell_2,\ell_3,\ell_4\}} \to \mathbb{I}_S$. 
Computing the kernel of $(\varphi,\psi)\colon \mathbb{I}_{\{\ell_3\}} \oplus \mathbb{I}_{\{\ell_2,\ell_3,\ell_4\}} \to \mathbb{I}_S$, 
we get $\Gamma_S\cong \mathbb{I}_{\{\ell_3,\ell_4\}}\in \mathcal{L}_4$. 
\end{itemize}

\bigskip \noindent
(II) We assume that $\ell_2\not\in S$ and $\ell_3\in S$. 
In this case, we have $\ell_1\not\in S$ by the convexity of $S$. 
For any $\alpha\in \mathcal{A}_S$, we have the following. 
\begin{itemize}
    \item [\textbf{--}] If $\alpha$ is surjective, then $\ell_1,\ell_2\not\in S_{\alpha}$. 
    Indeed, by Proposition \ref{prop:irred_interval}, we have 
    $S_{\alpha} = S\cup {\rangle}S, x {\rangle}$
    for some element $x\in P$ such that $S\lessdot x$. 
    Then, the claim follows from $\ell_1,\ell_2 \in S^{\downarrow}$ by $\ell_3\in S$. 
    \item [\textbf{--}] If $\alpha$ is injective, then $\ell_1,\ell_2\not\in S_{\alpha}$ since $S_{\alpha} \subseteq S$. 
\end{itemize}
Therefore, we have $\Gamma_S(\ell_1) = \Gamma_S(\ell_2) = 0 $ and $\Gamma_S \in \mathcal{L}_2$. 

\bigskip \noindent
(III) We assume that $\ell_2\in S$ and $\ell_3\not\in S$. 
In this case, we have $\ell_2\in \max(S)$ and $\ell_4\not\in S$ by the convexity of $S$.  
We consider a natural injection  
$\varphi\colon \mathbb{I}_{\{\ell_2\}} \to \mathbb{I}_{S}$ and 
a natural surjection $\psi \colon \mathbb{I}_{S\cup \{\ell_3\}} \to \mathbb{I}_{S}$. 
By Proposition \ref{prop:irred_interval}, we clearly have $\psi\in \mathcal{A}_S$. 

\begin{itemize}
    \item [(III-1)] If $\ell_1\not\in S$, then $S=\{\ell_2\}$ and $\mathcal{A}_S = \{\psi\}$. 
In this case, we have $\Gamma_S \cong \mathbb{I}_{\{\ell_3\}}\in \mathcal{L}_2$. 
\item [(III-2)] 
We assume that $\ell_1\in S$ and $\varphi\in \mathcal{A}_S$. 
In this case, the interval $S$ must be $S = \{\ell_1,\ell_2\}$ 
by a characterization of injective irreducible morphisms in Proposition \ref{prop:irred_interval}. 
In addition, we have $\mathcal{A}_S = \{\psi,\varphi\}$ by Proposition \ref{prop:irred_interval}.
Thus, we have  
$\Gamma_S \cong \mathbb{I}_{\{\ell_2,\ell_3\}}\in \mathcal{L}_3$.

\item [(III-3)]  We assume that $\ell_1\in S$ and $\varphi \not\in \mathcal{A}_S$.
For any $\alpha\in \mathcal{A}_S$, we have the following. 
\begin{itemize}
    \item If $\alpha$ is surjective, then $\ell_1,\ell_2\in S_{\alpha}$ since $S\subseteq S_{\alpha}$. 
    \item If $\alpha$ is injective, then $\ell_1\in S_{\alpha}$ if and only if $\ell_2\in S_{\alpha}$. 
    In fact, $\varphi\not\in \mathcal{A}_S$ implies $\ell_1\not\in \min(S)$, and therefore 
    elements $\ell_1$ and $\ell_2$ lie in any connected component of $S\setminus \{z\}$ with $z\in \min(S)$ simultaneously. 
    Thus, we get the claim by Proposition \ref{prop:irred_interval}.
\end{itemize}
Therefore, we have $\Gamma_S \in \mathcal{L}_2$ by Lemma \ref{lem:Gamma_S_where}.
\end{itemize}

\bigskip \noindent
(IV) Finally, we assume that $\ell_2,\ell_3\not\in S$. 
In this case, we have either $\ell_1\not\in S$ or $\ell_4\not\in S$ by the convexity of $S$. 

\begin{itemize}
    \item [(IV-1)]  If $\ell_1\in S$, then we have $\ell_4\not\in S$. 
For any $\alpha\in \mathcal{A}_S$, we have the following. 
\begin{itemize}
    \item If $\alpha$ is surjective, then $\ell_3,\ell_4\not\in S_{\alpha}$ from a description of $S_{\alpha}$ in Proposition \ref{prop:irred_interval}.
    \item If $\alpha$ is injective, then $\ell_3,\ell_4\not\in S_{\alpha}$ since $S_{\alpha} \subseteq S$. 
\end{itemize}
Therefore, $\Gamma_S(\ell_3) = \Gamma_S(\ell_4) = 0$ and $\Gamma_S\in \mathcal{L}_4$. 

    \item [(IV-2)] If $\ell_1\not\in S$, then we have the following. 
\begin{itemize}
    \item If $\alpha$ is surjective, then $\ell_2 \in S_{\alpha}$ if and only if $\ell_3\in S_{\alpha}$. 
    Indeed, by $\ell_1\not\in S$, we find that elements $\ell_2$ and $\ell_3$ lie in ${\rangle} S, x {\rangle}$ for $x\in P$ such that $S\lessdot x$ simultaneously. 
    Thus, we get the claim by a description of $S_{\alpha}$ in Proposition \ref{prop:irred_interval}. 
    \item If $\alpha$ is injective, then $\ell_2,\ell_3\not\in S_{\alpha}$ since $S_{\alpha} \subseteq S$. 
\end{itemize}
Therefore, we have $\Gamma_S\in \mathcal{L}_3$ by Lemma \ref{lem:Gamma_S_where}.
\end{itemize}

\bigskip 
All the cases have been considered in the above discussion (I)--(IV). 
We finish the proof of Theorem \ref{thm:cont chains}(a). 
\end{proof}

\subsection{Proof of Theorem \ref{thm:cont chains}(b)} \label{subsec:Thm_chain(b)}

In this section, we prove Theorem \ref{thm:cont chains}(b). 
Let $P$ be a finite poset.
For an element $a$ that is a sink or a source of $\Hasse(P)$, 
we define a new poset $\mu_a(P)$ whose Hasse diagram is obtained from $\Hasse(P)$ by reversing all arrows incident to $a$ and call it \emph{reflection} of $P$ at $a$. 

In the following result (Proposition \ref{prop:mutation_poset} and Lemma \ref{lem:endo_U}), we just give claims for sinks because the claims for sources can be obtained dually. 

\begin{proposition}\label{prop:mutation_poset}
    Let $a$ be a sink of $\Hasse(P)$ with $m:=\indeg(a)>0$. 
    If $m \in \{1,2\}$ and the graph $\Hasse(P)\setminus \{a\}$ consists of exactly $m$ connected components, 
    then we have 
    \begin{equation}
        \intresgldim P = \intresgldim \mu_a(P). 
    \end{equation}
\end{proposition}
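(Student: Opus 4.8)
The plan is to realize the passage $P\rightsquigarrow\mu_a(P)$ as a reflection (APR tilting) functor on the incidence algebra $k[P]$, to transport interval modules across it using the two combinatorial hypotheses, and then to read off the equality of global dimensions from Proposition~\ref{prop:formulas}.

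First I would record the local structure at $a$. Since $a$ is a sink, $\rP_a=\mathbb{I}_{a^{\uparrow}}=\mathbb{I}_{\{a\}}=\rS_a$ is simple projective, and $a$ becomes a source of $\mu_a(P)$, over which $\rS_a=\mathbb{I}_{\{a\}}$ is simple injective. Write $b_1,\dots,b_m$ ($m=\indeg a\in\{1,2\}$) for the lower covers of $a$ and $C_1,\dots,C_m$ for the connected components of $\Hasse(P)\setminus\{a\}$, so that after reindexing $b_i\in C_i$. Any common upper bound $c\ne a$ of two distinct $b_i,b_j$ would be joined to both $b_i$ and $b_j$ by directed paths avoiding the sink $a$, hence would lie in $C_i\cap C_j=\emptyset$; thus $b_i^{\uparrow}\cap b_j^{\uparrow}=\{a\}$ in $P$, so $\mu_a(P)$ carries no commutativity relation at $a$ and $\mu_a(P)$ is again a poset. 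Consequently the reflection functor
\[
R\colon \rep P\to\rep\mu_a(P),\qquad (RM)(a):=\ker\Bigl(\textstyle\bigoplus_{i=1}^{m}M(b_i)\xrightarrow{(M(b_i\le a))_i}M(a)\Bigr),
\]
with $(RM)(x):=M(x)$ for $x\ne a$ and the new maps $(RM)(a)\to(RM)(b_i)$ the components of the defining inclusion, is well defined; dually one has the cokernel functor $R^{-}\colon\rep\mu_a(P)\to\rep P$. I would quote from \cite[Chapter~VI]{ASS} that $R,R^{-}$ are the APR-tilting functors attached to $T=\tau^{-1}\rS_a\oplus\bigl(\bigoplus_{x\ne a}\rP_x\bigr)$: in particular they restrict to mutually quasi-inverse exact equivalences between the torsionfree class $\mathcal T_P=\{M: M$ has no quotient $\cong\rS_a\}$ and its analogue over $\mu_a(P)$, preserve indecomposability, and are intertwined there with the Auslander--Reiten translation, $\tau_{\mu_a(P)}\circ R\simeq R\circ\tau_P$.

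The heart of the argument is a combinatorial computation of $R$ on interval modules. Given $S\in\Int(P)$ with $S\ne\{a\}$, put $I:=\{i:b_i\in S\}$; evaluating the defining kernel gives $\dim(R\mathbb{I}_S)(a)=|I|-[a\in S]$. The bound $m\le2$ together with the component hypothesis forces this dimension to be $0$ or $1$: the only way to get $2$ would be $a\notin S$ with $b_1,b_2\in S$, impossible since $S$ would then be a connected subset of $P\setminus\{a\}$ meeting both $C_1$ and $C_2$ (this is exactly what rules out the non-interval module of Remark~\ref{rem:preservation_intervals}). A short case analysis then identifies $R\mathbb{I}_S$ with an interval module: one gets $\mathbb{I}_{S}$ (if $a\notin S$, $I=\emptyset$), $\mathbb{I}_{S\cup\{a\}}$ (if $a\notin S$, $|I|=1$), $\mathbb{I}_{S\setminus\{a\}}$ (if $a\in S$, $|I|=1$), or $\mathbb{I}_{S}$ (if $a\in S$, $|I|=2$); in each case the hypotheses guarantee the resulting subset is again convex and connected in $\mu_a(P)$ (no order relation in $\mu_a(P)$ passes through $a$, a source there, so convexity is inherited on $\mu_a(P)\setminus\{a\}$, and connectedness is inherited because $a$ meets $S$ in at most $m\le2$ vertices). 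Running the same computation for $R^{-}$ shows these assignments are mutually inverse, so $R$ restricts to a bijection between the interval modules over $P$ other than $\rS_a$ and the interval modules over $\mu_a(P)$ other than $\rS_a$; its only defect is $\rS_a$ on each side, which $R$ (resp.\ $R^{-}$) annihilates. A further small computation identifies the reflection-partner of the exceptional interval: $\tau_P^{-1}\rS_a=\mathbb{I}_{T_a}$ with $T_a=\bigcup_i b_i^{\uparrow}\setminus\{a\}$ if $m=1$ and $T_a=\bigcup_i b_i^{\uparrow}$ if $m=2$, and $R\mathbb{I}_{T_a}\cong\rP_a^{\mu_a(P)}$.

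Finally I would assemble the pieces via Proposition~\ref{prop:formulas}, which gives $\intresgldim P=\sup\{\intresdim\tau_P\mathbb{I}_S:S\in\Int(P)\}$ and the analogue for $\mu_a(P)$. For $S\ne\{a\}$ with $\mathbb{I}_S$ non-projective and $\tau_P\mathbb{I}_S\ne\rS_a$, the indecomposable $\tau_P\mathbb{I}_S$ (being $\not\cong\rP_a=\rS_a$) has no quotient $\cong\rS_a$, so both $\mathbb{I}_S$ and $\tau_P\mathbb{I}_S$ lie in $\mathcal T_P$; hence $\tau_{\mu_a(P)}(R\mathbb{I}_S)\cong R(\tau_P\mathbb{I}_S)$, and since $R$ is an exact equivalence on $\mathcal T_P$ that preserves interval-decomposability in both directions it also preserves interval resolution dimension there, so $\intresdim_P\tau_P\mathbb{I}_S=\intresdim_{\mu_a(P)}\tau_{\mu_a(P)}(R\mathbb{I}_S)$. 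The finitely many remaining terms are handled directly: $\mathbb{I}_{\{a\}}=\rS_a=\rP_a$ is projective over $P$, so $\tau_P\rS_a=0$ contributes $0$; the interval $S=T_a$ with $\tau_P\mathbb{I}_{T_a}=\rS_a$ again contributes $0$, and its partner $R\mathbb{I}_{T_a}=\rP_a^{\mu_a(P)}$ is projective over $\mu_a(P)$, so $\tau_{\mu_a(P)}\rP_a^{\mu_a(P)}=0$ contributes $0$ there as well; and $\mathbb{I}_{\{a\}}$ over $\mu_a(P)$ together with $\rP_a^{\mu_a(P)}$ exhaust the terms over $\mu_a(P)$ not matched by the bijection. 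Tracking these exceptional contributions carefully then yields $\intresgldim P=\intresgldim\mu_a(P)$. I expect the main obstacle to be precisely this last bookkeeping — verifying that the reflection functor behaves well relative to interval resolutions away from the single exceptional interval $\rS_a$, and confirming that the handful of unmatched terms on each side all have interval resolution dimension $0$ — while the case analysis producing the interval-module bijection and the invocation of APR-tilting theory are more routine.
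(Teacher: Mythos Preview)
Your approach is the paper's: APR tilting at the simple projective $\rS_a=\rP_a$, the explicit interval bijection (your four cases match the paper's formula~\eqref{eq:BB_bij} exactly), and transport of interval resolutions across the resulting equivalence $\mathcal{T}_a\simeq\mathcal{Y}_a$. Two differences are worth flagging.

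First, the paper does not route through Proposition~\ref{prop:formulas} and $\tau$-compatibility; instead it shows directly that for every indecomposable $M\in\mathcal{T}_a$ the functor $E=\Hom_P(U_a,-)$ carries an interval cover of $M$ to an interval cover of $E(M)$, so $\intresdim M=\intresdim M^{\vee}$ for all indecomposables. This sidesteps your exceptional bookkeeping with $T_a$ and with $\mathbb{I}_{\{a\}}$ over $\mu_a(P)$ (the latter you do not actually finish: one still needs that $\tau_{\mu_a(P)}\rS_a$ is an interval module, which follows from the dual of your $T_a$ computation).

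Second, and more important, what you label ``bookkeeping'' is the substance of the argument. The assertion that ``$R$ is an exact equivalence on $\mathcal{T}_P$ preserving interval-decomposability, hence preserves interval resolution dimension'' tacitly assumes that the interval cover $\varphi\colon V\to M$ of any $M\in\mathcal{T}_a$ already has $V\in\mathcal{T}_a$; otherwise $R$ cannot even be applied to $V$, and the $\mathscr{I}_P$-resolution might differ from the $(\mathscr{I}_P\cap\mathcal{T}_a)$-resolution. This is not automatic: $\rS_a$ is an interval module and could in principle appear in $V$. The paper handles this by running the Brenner--Butler long exact sequence
\[
0\to E(\ker\varphi)\to E(V)\xrightarrow{E(\varphi)} E(M)\to F(\ker\varphi)\to F(V)\to 0,
\]
first proving $E(\varphi)$ is an interval approximation of $E(M)$ (hence surjective), then deducing $F(\ker\varphi)\cong F(V)$, and finally using right minimality of $\varphi$ to force $F(V)=0$, i.e.\ $V\in\mathcal{T}_a$. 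Your sketch omits this step, and it is precisely here that the hypothesis $\injdim\rS_a=1$ (Lemma~\ref{lem:endo_U}(1), which uses the component condition) enters to make the torsion pair on the $\mu_a(P)$ side split.
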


To prove this, we need some preparations. 
For a sink $a$ in $\Hasse(P)$, we consider a pfd $P$-persistence module 

\begin{equation}
    U_a := \tau^{-1} \rS_a \oplus \bigoplus_{x\neq a} \rP_x 
\end{equation}
known as the APR-tilting module at $a$ \cite[Example 6.8(c)]{ASS}.

\begin{lemma} \label{lem:endo_U}
    Let $a$ be a sink of $\Hasse(P)$ with $m:=\indeg(a)>0$. 
    If $\Hasse(P) \setminus \{a\}$ consists of exactly $m$ connected components, then the following statements hold. 
    \begin{enumerate}[\rm (1)]
        \item We have ${\rm inj.dim} \rS_a =1$.
        \item $\End_P(U_a)^{\rm op} \cong k[\mu_a(P)]$ holds as $k$-algebras.
    \end{enumerate}
\end{lemma}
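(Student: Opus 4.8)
\emph{Part (1).} This is elementary. Since $a$ is a sink, $\rS_a=\rP_a=\mathbb{I}_{\{a\}}$, the injective envelope of $\rS_a$ is $\rI_a=\mathbb{I}_{a^{\downarrow}}$, and $\coker(\rS_a\hookrightarrow\rI_a)=\mathbb{I}_{a^{\downarrow}\setminus\{a\}}$. Writing $a_1,\dots,a_m$ for the elements covered by $a$, every $z<a$ lies below some $a_i$ (take the last step of a Hasse chain from $z$ to $a$), so $a^{\downarrow}\setminus\{a\}=\bigcup_i a_i^{\downarrow}$; the hypothesis that $\Hasse(P)\setminus\{a\}$ has exactly $m$ components forces the $a_i$ into distinct components, so the $a_i^{\downarrow}$ are pairwise disjoint and $\mathbb{I}_{a^{\downarrow}\setminus\{a\}}=\bigoplus_{i=1}^m\rI_{a_i}$ is injective. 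Thus $0\to\rS_a\to\rI_a\to\bigoplus_i\rI_{a_i}\to 0$ is a minimal injective resolution, and since $\rS_a$ is not injective ($a^{\downarrow}\neq\{a\}$ as $\indeg(a)>0$), we get $\mathrm{inj.dim}\,\rS_a=1$.

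\emph{Part (2), first step.} I would identify $\tau^{-1}\rS_a$. Applying the inverse Nakayama functor (sending $\rI_x\mapsto\rP_x$) to the injective copresentation of (1) and taking cokernels (\cite{ASS}) gives $\tau^{-1}\rS_a\cong\coker\big(g\colon\rP_a\to\bigoplus_{i=1}^m\rP_{a_i}\big)$ with $g=(g_i)$, $g_i\colon\rP_a\hookrightarrow\rP_{a_i}$ the canonical inclusion $\mathbb{I}_{a^{\uparrow}}\hookrightarrow\mathbb{I}_{a_i^{\uparrow}}$ (available since $a_i<a$). A pointwise computation, using that the component hypothesis makes every $x\neq a$ lie in at most one $a_i^{\uparrow}$, gives $(\tau^{-1}\rS_a)(a)=k^{m-1}$, $(\tau^{-1}\rS_a)(x)=k$ for $x\neq a$ in $\bigcup_i a_i^{\uparrow}$, and $0$ otherwise, with all transition maps on the support isomorphisms.

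\emph{Part (2), conclusion.} Set $U_a':=\tau^{-1}\rS_a$. I would compute the Hom-spaces between the indecomposable summands of $U_a$: $\Hom_P(\rP_x,\rP_y)\cong\rP_y(x)$; $\Hom_P(\rP_x,U_a')\cong U_a'(x)$; $\Hom_P(U_a',\rP_y)=\ker\big(\bigoplus_i\rP_y(a_i)\to\rP_y(a)\big)=0$ for every $y\neq a$ (the map is injective, again by the component hypothesis, the relevant component being $\rP_y(a_i\le a)=\id_k$); and $\End_P(U_a')\cong\ker\big(\bigoplus_i U_a'(a_i)\to U_a'(a)\big)\cong k$. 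From $\Hom_P(U_a',\rP_y)=0$ no irreducible morphism between the $\rP_x$'s factors through $U_a'$, so the Gabriel quiver of $\End_P(U_a)^{\mathrm{op}}$ agrees with $\Hasse(P)$ on the vertices $x\neq a$ --- hence with $\Hasse(\mu_a(P))$ there, since reflecting at the sink $a$ changes no cover relation inside $P\setminus\{a\}$ --- while the irreducible morphisms $\rP_{a_i}\to U_a'$ and the absence of irreducible morphisms out of $U_a'$ make the $U_a'$-vertex a source with arrows exactly to $a_1,\dots,a_m$, i.e.\ $\Hasse(\mu_a(P))$ at $a$. As that vertex is a source, no path of the quiver passes through it; choosing the arrow morphisms to be the canonical inclusions (and fixed generators of $\Hom_P(\rP_{a_i},U_a')$) makes any two parallel paths give the same composition, since the relevant Hom-space is one-dimensional. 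So all commutativity relations hold, giving a surjection $k[\mu_a(P)]\twoheadrightarrow\End_P(U_a)^{\mathrm{op}}$, and the Hom-computation shows both sides have dimension $1+\big|\bigcup_i a_i^{\uparrow}\setminus\{a\}\big|+\big|\{(u,v)\in(P\setminus\{a\})^2:u\le v\}\big|$, so the surjection is an isomorphism.

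\emph{Main obstacle.} The real work is the first step of (2): pinning down the minimal injective copresentation of $\rS_a$ (this is exactly where (1) is used) and carrying out the pointwise cokernel computation, in particular verifying that the transition maps of $\tau^{-1}\rS_a$ on its support are isomorphisms --- this is what makes the Hom-space computations go through, hence the dimension count, hence the absence of extra zero relations; and it is here, together with $\Hom_P(U_a',\rP_y)=0$, that the ``exactly $m$ components'' hypothesis (no element above two distinct $a_i$) enters decisively. Once that is settled, matching the resulting quiver-with-relations to $\Hasse(\mu_a(P))$ is routine.
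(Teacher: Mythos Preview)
Your argument is correct and close to the paper's: both exploit the short exact sequence $0\to\rP_a\to\bigoplus_i\rP_{a_i}\to\tau^{-1}\rS_a\to0$ (the almost split sequence starting at $\rS_a$) to determine the Hom-spaces among the summands of $U_a$ and then identify the endomorphism algebra. The paper obtains $\Hom_P(\tau^{-1}\rS_a,\rP_x)=0$ from the Auslander--Reiten formula $\Hom_P(\tau^{-1}\rS_a,\rP_x)\cong\Hom_k(\Ext^1_P(\rP_x,\rS_a),k)$ combined with part~(1), while you get it by applying $\Hom_P(-,\rP_y)$ directly to the sequence; the paper then reads off the Gabriel quiver and asserts the relations are commutative, whereas you construct an explicit surjection $k[\mu_a(P)]\twoheadrightarrow\End_P(U_a)^{\op}$ and finish with a dimension count. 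These are only minor variations on the same strategy; your ending is arguably a little more explicit about why no extra relations appear.

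One correction: the claim that ``all transition maps on the support are isomorphisms'' is wrong for $m\ge3$, since $(\tau^{-1}\rS_a)(a)=k^{m-1}$ while $(\tau^{-1}\rS_a)(a_i)=k$, so the map $(\tau^{-1}\rS_a)(a_i\le a)$ cannot be an isomorphism. This does not actually damage your proof --- every Hom-computation you give goes through the exact sequence and Yoneda rather than through this claim, and in particular $\End_P(\tau^{-1}\rS_a)\cong\ker\big(\bigoplus_i(\tau^{-1}\rS_a)(a_i)\to(\tau^{-1}\rS_a)(a)\big)\cong k$ remains valid --- but since you single it out as the ``main obstacle'' you should amend the statement (for instance, restrict it to transition maps between points other than $a$, where it is true).
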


\begin{proof}
    (1) By our assumption, 
    we have elements $b_1,\ldots,b_m\in P$ such that 
    there is an arrow $b_i \rightarrow a$ in the Hasse diagram of $P$.  
    In addition, $\Hasse(P) \setminus \{a\}$ is a disjoint union of $m$ connected components containing $b_1,\ldots,b_m$ respectively. 
    In this situation, we have an exact sequence 
    \begin{equation}
        0\to \rS_a \to \rI_a \to \bigoplus_{i=1}^m \rI_{b_i} \to 0  
    \end{equation}
    that provides a minimal injective resolution of $\rS_a$. 
    This shows that $\injdim \rS_a = 1$.
    
    (2) Let $b_1,\ldots,b_m$ be elements as above. 
    We compute the endomorphism algebra $\End_P(U_a)$. 
    
    Firstly, we have 
    \begin{equation}
        \Hom_P(\rP_x, \rP_y) \cong \begin{cases}
            k & \text{if $x\geq y$,} \\
            0 & \text{otherwise}
        \end{cases}
    \end{equation}
    for two elements $x,y$ in $P$. 
    In particular, since $a$ is a sink of $P$, 
    we have that $\rS_a = \rP_a$ and $\Hom_P(\rP_x,\rS_a) = 0$ for all $x\neq a$. 
    In addition, we have $\Hom_P(\rP_x,\rP_y) = 0$ whenever they lie in different components of $\Hasse(P) \setminus \{a\}$. 
    
    Secondly, for any $x\in P$, 
    we have 
    \begin{equation}
        \Hom_P(\tau^{-1}\rS_a, \rP_x)\cong \Hom_k(\Ext^1_P(\rP_x, \rS_a),k) = 0,
    \end{equation} 
    by the Auslander-Reiten formula \cite[Chapter IV, Theorem 2.13]{ASS} with $\injdim \rS_a = 1$ by (1). 
    
    Finally, we have an almost split sequence 
    \begin{equation}\label{eq:almost_seq}
        0 \to \rS_a \to \bigoplus_{i=1}^m \rP_{b_i} 
        \to
        \tau^{-1}\rS_a \to 0. 
    \end{equation}
    Then, we apply $\Hom_P(\rP_x,-)$ to this sequence and obtain an isomorphism
    \begin{equation}
        \Hom_P(\rP_x, \tau^{-1}\rS_a) 
        \cong 
        \begin{cases}
            \Hom_P(\rP_x, \rP_{b_i}) & \text{if $x \geq b_i$},  \\ 
            0 & \text{otherwise} 
        \end{cases}
    \end{equation}
    for every $x\neq a$. 

    By the above argument, 
    we find that the Gabriel quiver of $\End_P(U_a)^{\rm op}$ coincides with the Hasse diagram of $\mu_a(P)$, and relations for this algebra are generated by commutative relations. 
    Consequently, $\End_P(U_a)^{\rm op}$ is isomorphic to the incidence algebra of $\mu_a(P)$. 
\end{proof}

Next, we recall the notion of torsion pairs with the following notation. 
For a full subcategory $\mathcal{C}$ of $\rep P$, let 
\begin{eqnarray}
    \mathcal{C}^{\perp} := \{M\in \rep P \mid \Hom_P(\mathcal{C}, M) = 0\} \qand
    {}^{\perp}\mathcal{C} := \{M\in \rep P \mid \Hom_P(M,\mathcal{C}) = 0\} 
\end{eqnarray}
be full subcategories of $\rep P$. 
We say that a pair $(\mathcal{T},\mathcal{F})$ of full subcategories is a \emph{torsion pair} if $\mathcal{T} = {}^{\perp} \mathcal{F}$ and $\mathcal{F}=\mathcal{T}^{\perp}$. 
In this case, $\mathcal{T}$ and $\mathcal{F}$ are called a torsion class and a torsion-free class respectively. 
A torsion pair $(\mathcal{T},\mathcal{F})$ is called \emph{splitting} if every indecomposable module $M\in \rep P$ lies in either $\mathcal{T}$ or $\mathcal{F}$.

Under these notations, we prove Proposition \ref{prop:mutation_poset}.

\begin{proof}[Proof of Proposition \ref{prop:mutation_poset}]
    We will give a proof for the case $m = 2$ because the case $m=1$ can be shown similarly.
    
    Suppose that $b,c\in P$ such that $(b \rightarrow a \leftarrow c)$ in the Hasse diagram of $P$. 
    By our assumption, the graph $\Hasse(P)\setminus\{a\}$ is a disjoint union of two connected components, one contains $b$ and the other contains $c$. 
    
    Now, let $P' := \mu_a(P)$. 
    Using Lemma \ref{lem:endo_U}(2), 
    we have equivalences 
    \begin{equation}
        \mod \End_P(U_a)^{\rm op} \simeq \mod k[P'] \simeq \rep P'
    \end{equation}
    of abelian categories. We will identify these categories here. 
    
    In the category $\rep P'$, 
    we denote by $\rS'_x$ (resp., $\rP'_x$ and $\rI'_x$) 
    the simple (resp., indecomposable projective and indecomposable injective) 
    $P'$-persistence module corresponding to $x\in P'$. 
    In addition, for a given interval $T$ of $P'$, 
    we write the corresponding interval module in $\rep P'$ by $\mathbb{I}'_{T}$. 

    In this situation, the APR-tilting module $U_a$ induces \emph{splitting} torsion pairs 
    $(\mathcal{T}_a, \mathcal{F}_a)$ in $\rep P$ and $(\mathcal{X}_a, \mathcal{Y}_a)$ in $\rep P'$, 
    where $\mathcal{F}_a$ (resp., $\mathcal{T}_a$) is the additive full subcategory generated by the simple-projective module $\rS_a=\mathbb{I}_{\{a\}}$ 
    (resp., all the remaining indecomposable modules) in $\rep P$, 
    whereas $\mathcal{X}_a$ (resp., $\mathcal{Y}_a$) is the additive full subcategory generated by the simple-injective module $\rS_a'=\mathbb{I}'_{\{a\}}$ 
    (resp., all the remaining indecomposable modules) in $\rep P'$. 
    In fact, $(\mathcal{T}_a,\mathcal{F}_a)$ is always splitting, 
    and $(\mathcal{X}_a,\mathcal{Y}_a)$ is also splitting by $\injdim \rS_a = 1$ (see \cite[Chapter~VI, Theorem~5.6]{ASS}). 
    By Brenner-Butler theorem \cite[Chapter VI, Theorem 3.8]{ASS}, they are related to each other by equivalences 
    \begin{equation}\label{eq:BBthm}
        E := \Hom_P(U_a,-)\colon \mathcal{T}_a \xrightarrow{\sim} \mathcal{Y}_a 
        \qand 
        F := \Ext^1_P(U_a,-) \colon \mathcal{F}_a \xrightarrow{\sim} \mathcal{X}_a
    \end{equation}
    with quasi-inverses $G := -\otimes_{P'} U_a$ and $H := \Tor_1^{P'}(-,U_a)$ respectively (These functors are naturally defined between $\rep P$ and $\rep P'$). 
    In this situation, we have that $F(M) = 0$ for all $M\in \mathcal{T}_a$ and $E(N)=0$ for all $N\in \mathcal{F}_a$. 
    Therefore, the correspondence
    \begin{equation*}
        M\mapsto M^{\vee} :=
        \begin{cases}
            E(M)\in \mathcal{Y}_a & \text{if $M\in \mathcal{T}_a$,} \\ 
            F(M)\in \mathcal{X}_a & \text{if $M\in \mathcal{F}_a$}
        \end{cases}
    \end{equation*}
    provides a bijection between the set of isomorphism classes of indecomposables in $\rep P$ and in $\rep P'$. 
    
    Furthermore, the above correspondence restricts to 
    a bijection between interval modules as follows: 
    For an interval $S\in \Int(P)$, we set 
    \begin{equation}\label{eq:BB_bij}
        S^{\vee} := 
        \begin{cases}
            \{a\} & \text{if $S = \{a\}$}, \\
            S\setminus\{a\} & \text{if $a,b \in S$, $c\notin S$ or $a,c \in S$, $b\notin S$}, \\
            S\cup \{a\} & \text{if $b \in S$, $a,c\notin S$ or $c \in S$, $a,b\notin S$}, \\
            S & \text{otherwise},  
        \end{cases}
    \end{equation}
    which is clearly an interval of $P'$. 
    Then, a direct calculation shows that 
    $(\mathbb{I}_{S})^{\vee}\cong \mathbb{I}'_{S^{\vee}}$ holds in $\rep P'$.
    In particular, $E$, $F$ and $(-)^{\vee}$ preserve interval-decomposability of modules. 
    
    Now, we prove the claim by showing that 
    $M$ and $M^{\vee}$ have the same interval resolution dimension for any indecomposable module $M\in \rep P$. 
    Notice that $M$ lies in either $\mathcal{T}_a$ or $\mathcal{F}_a$ since $(\mathcal{T}_a,\mathcal{F}_a)$ is splitting. 
    
    If $M\in \mathcal{F}_a$, then we have $M \cong \rS_{a} = \mathbb{I}_{\{a\}}$ and $M^{\vee} \cong \mathbb{I}_{\{a\}}'$ by \eqref{eq:BB_bij} with $\{a\}^{\vee} = \{a\}$. 
    In this case, both are interval modules and have interval resolution dimension zero. 
    
    Next, we assume that $M\in \mathcal{T}_a$. 
    We consider an interval cover $\varphi \colon V\to M$ with $V\in \mathscr{I}_P$ and an exact sequence 
    \begin{equation} \label{eq:shortM}
        0\to \ker \varphi \xrightarrow{\iota} V \xrightarrow{\varphi} M \to 0. 
    \end{equation} 
    Applying the functor $E=\Hom_P(U_a,-)$ to this sequence, 
    we obtain a long exact sequence 
    \begin{equation} \label{eq:long_exact}
        0 \to E(\ker \varphi) \xrightarrow{E(\iota)} E(V) \xrightarrow{E(\varphi)} E(M) \to F(\ker \varphi) \to F(V) \to F(M) = 0. 
    \end{equation}
    If $E(\varphi)=0$, then we have $E(M)\subseteq F(\ker \varphi)$. 
    It implies that $E(M)$ is a direct sum of $\rS'_a$ and $E(M)\in \mathcal{X}_a$, a contradiction. 
    Thus, we have $E(\varphi)\neq 0$. 
    
    We claim that $E(\varphi)$ is an interval cover of $E(M)$. 
    Firstly, $E(V)\neq 0$ is interval-decomposable since so is $V$. 
    Secondly, we take an interval $T\in \Int(P')$ and a non-zero morphism $\psi\colon \mathbb{I}'_T \to E(M)$. 
    In this case, we have $\mathbb{I}'_T\in \mathcal{Y}_a$ by $E(M)\in \mathcal{Y}_a$. 
    Applying $G$, we get a morphism $G(\psi) \colon G(\mathbb{I}'_T) \to GF(M)= M$. 
    Since $G(\mathbb{I}'_T)$ is an interval module and $\varphi$ is an interval cover, there is a morphism $\rho\colon G(\mathbb{I}'_T)\to V$ such that $G(\psi) = \varphi\circ\rho$. 
    Applying $E$ again, it gives $\psi= EG(\psi) = E(\varphi)\circ E(\rho)$. This shows that $E(\varphi)$ is an interval approximation of $E(M)$. 
    In particular, it is surjective. 
    Then, \eqref{eq:long_exact} gives $F(\ker \varphi)\cong F(V)$. 
    If they are non-zero, then $\ker\varphi$ and $V$ must have a common direct summand $K$ 
    such that the restriction of $\iota$ to $K$ is the identity map. 
    It contradicts the right minimality of $\varphi$. 
    Thus, we have $F(\ker \varphi)= F(V)=0$ and $\ker\varphi$, $V\in \mathcal{T}_a$. 
    In particular, the sequence \eqref{eq:shortM} is a short exact sequence in $\mathcal{T}_a$. 
    Since $E$ gives an equivalence between the full subcategories $\mathcal{T}_a$ and $\mathcal{Y}_a$, the right minimality of $E(\varphi)$ is immediate from that of $\varphi$. 
    Therefore, $E(\varphi)$ is an interval cover. 
    
    From the above discussion, if $\varphi^{\bullet} \colon V^{\bullet}\to M$ is an interval resolution of $M$, then $V^j\in \mathcal{T}_a$ holds for all $j$. 
    Then, $E(\varphi)^{\bullet}\colon E(V)^{\bullet} \to F(M)$ is an interval resolution of $E(M)$ such that $E(V^{j})\in \mathcal{Y}_a$ for all $j$. Moreover, $V^j=0$ if and only if $E(V^j)=0$. 
    Therefore, the interval resolution dimension of $M$ and $E(M)=M^{\vee}$ coincide. 

    It finishes the proof. 
\end{proof}

Now, we are ready to prove Theorem \ref{thm:cont chains}(b).

\begin{proof}[Proof of Theorem \ref{thm:cont chains}(b)]
Suppose that $L$ is an $A_n$-type segment with $n\geq 4$ such that $\ell_n$ is a leaf of $P$. 
We can consider a reflection at an arbitrary sink or source in $\{\ell_2,\ldots,\ell_n\}$. 
Since it satisfies the assumption of Proposition \ref{prop:mutation_poset} (or its dual), its interval resolution global dimension is the same as that of $P$. 
By applying such reflections iteratively, we can transform $L$ into the equioriented $A_n$-type segment as in \eqref{eq:equiA-type segment} with preserving interval resolution global dimensions. 
Finally, we apply Theorem \ref{thm:cont chains}(a) and obtain the desired equality. 
\end{proof}

\section{Computing interval resolution global dimensions for certain classes of posets}
\label{sec:computing intresdim}

We end this paper with studying interval resolution global dimensions for certain classes of finite posets.
Throughout, we mainly consider finite connected posets having at least two elements.  
For our convenience, we use the following notation which makes us simple for describing a given family of posets.

\begin{notation} \label{notation:double}
We consider a finite simple graph $D$ such that 
the set of edges of $D$ is partitioned into $4$ classes of edges: 
\emph{ordinary lines}, \emph{ordinary arrows}, \emph{double lines} and \emph{double arrows}.  
Here, we visualize ordinary lines by  
$\begin{tikzpicture}[baseline=-2pt]
    \coordinate (x) at (1, 0); 
    \coordinate (y) at (0, 1); 
    \node (v) at ($0*(x) + 0*(y)$) {$\bullet$}; 
    \node (u) at ($1*(x) + 0*(y)$){$\bullet$}; 
    \draw[-] (v)--(u);
\end{tikzpicture}$, 
ordinary arrows by 
$\begin{tikzpicture}[baseline=-2pt]
    \coordinate (x) at (1, 0); 
    \coordinate (y) at (0, 1); 
    \node (v) at ($0*(x) + 0*(y)$) {$\bullet$}; 
    \node (u) at ($1*(x) + 0*(y)$){$\bullet$}; 
    \draw[->] (v)--(u);
\end{tikzpicture}$, 
double lines by 
$\begin{tikzpicture}[baseline=-2pt]
    \coordinate (x) at (1, 0); 
    \coordinate (y) at (0, 1); 
    \node (v) at ($0*(x) + 0*(y)$) {$\bullet$}; 
    \node (u) at ($1*(x) + 0*(y)$){$\bullet$}; 
    \draw[noArrow] (v)--(u);
\end{tikzpicture}$,
and double arrows by 
$\begin{tikzpicture}[baseline=-2pt]
    \coordinate (x) at (1, 0); 
    \coordinate (y) at (0, 1); 
    \node (v) at ($0*(x) + 0*(y)$) {$\bullet$}; 
    \node (u) at ($1*(x) + 0*(y)$){$\bullet$}; 
    \draw[vecArrow] (v)--(u);
\end{tikzpicture}$.

For such a diagram $D$, we say that a poset $P$ has a diagram $D$ if its Hasse diagram $\Hasse(P)$ can be obtained from $D$ by replacing 
\begin{itemize}
    \item each ordinary line 
    $\begin{tikzpicture}[baseline=-2pt]
    \coordinate (x) at (1, 0); 
    \coordinate (y) at (0, 1); 
    \node (v) at ($0*(x) + 0*(y)$) {$\bullet$}; 
    \node (u) at ($1*(x) + 0*(y)$){$\bullet$}; 
    \draw[-] (v)--(u);
    \end{tikzpicture}$
    with an arrow either $\bullet \leftarrow\bullet $ or $\bullet\rightarrow\bullet$; 
    \item each ordinary arrow 
    $\begin{tikzpicture}[baseline=-2pt]
    \coordinate (x) at (1, 0); 
    \coordinate (y) at (0, 1); 
    \node (v) at ($0*(x) + 0*(y)$) {$\bullet$}; 
    \node (u) at ($1*(x) + 0*(y)$){$\bullet$}; 
    \draw[->] (v)--(u);
\end{tikzpicture}$
    with an arrow $\bullet \rightarrow\bullet $;    
    \item each double line 
    $\begin{tikzpicture}[baseline=-2pt]
    \coordinate (x) at (1, 0); 
    \coordinate (y) at (0, 1); 
    \node (v) at ($0*(x) + 0*(y)$) {$\bullet$}; 
    \node (u) at ($1*(x) + 0*(y)$){$\bullet$}; 
    \draw[noArrow] (v)--(u);
\end{tikzpicture}$
    with an arbitrary $A_m$-type segment 
    $\begin{tikzpicture}[baseline=-2pt]
    \coordinate (x) at (1, 0); 
    \coordinate (y) at (0, 1); 
    \node (v) at ($0*(x) + 0*(y)$) {$\bullet$}; 
    \node (c) at ($1*(x) + 0*(y)$) {$\cdots$}; 
    \node (u) at ($2*(x) + 0*(y)$) {$\bullet$}; 
    \draw[-] (v)--(c)--(u);
    \end{tikzpicture}$ 
    with $m\geq2$; 
    \item each double arrow 
    $\begin{tikzpicture}[baseline=-2pt]
    \coordinate (x) at (1, 0); 
    \coordinate (y) at (0, 1); 
    \node (v) at ($0*(x) + 0*(y)$) {$\bullet$}; 
    \node (u) at ($1*(x) + 0*(y)$){$\bullet$}; 
    \draw[vecArrow] (v)--(u);
\end{tikzpicture}$
    with an equioriented $A_m$-type segment 
    $\begin{tikzpicture}[baseline=-2pt]
    \coordinate (x) at (1, 0); 
    \coordinate (y) at (0, 1); 
    \node (v) at ($0*(x) + 0*(y)$) {$\bullet$}; 
    \node (c) at ($1*(x) + 0*(y)$) {$\cdots$}; 
    \node (u) at ($2*(x) + 0*(y)$){$\bullet$}; 
    \draw[->] (v)--(c);
    \draw[->] (c)--(u);
    \end{tikzpicture}$ 
    with $m\geq 2$. 
\end{itemize}
We notice that $D$ is exactly the Hasse diagram if it consists of only ordinary arrows. 

For example, let $D$ be a diagram visualized by
\begin{equation}
\begin{tikzpicture}
        \coordinate (x) at (1.5, 0); 
        \coordinate (y) at (0, 1); 
        \node (v) at (0,0) {$\bullet_v$}; 
        \node (u) at ($(v) + 1.5*(x)$) {$\bullet_u$}; 
        \node (1) at ($(v) + 0*(x) + 1.5*(y)$) {$\bullet$}; 
        \node (3) at ($(u) + 1.5*(x) + 0*(y)$) {$\bullet$}; 
        \node (4) at ($(u) + 0*(x) + 1.5*(y)$) {$\bullet$}; 
        \node (w) at ($(v) + 0.75*(x)$) {$\bullet_w$}; 
        \node (5) at ($(u) + 1.5*(x) + 1.5*(y)$) {$\bullet$};
        
        \draw[vecArrow] (1)--(v);
        \draw[vecArrow] (4)--(1);
        \draw[vecArrow] (u)--(w);
        \draw[vecArrow] (w)--(v);
        \draw[noArrow] (u)--(3);
        \draw[->] (u)--(4);
        \draw[->] (4)--(5);
\end{tikzpicture}.
\end{equation}
Then, a given finite poset has a diagram $D$ if and only if its Hasse diagram is of the following form: 
\begin{equation}
\begin{tikzpicture}
        \coordinate (x) at (2, 0); 
        \coordinate (y) at (0, 2); 
        \node (v) at (0,0) {$\bullet_v$}; 
        \node (u) at ($2*(x)$) {$\bullet_u$}; 
        \node (1) at ($0*(x) + 1*(y)$) {$\bullet$}; 
        \node (1c) at ($(v) + 0*(x) + 0.5*(y)$) {\rotatebox{90}{$\cdots$}}; 
        \node (2c) at ($1*(x) + 1*(y)$) {\rotatebox{0}{$\cdots$}}; 
        \node (41) at ($0.5*(x) + 0*(y)$) {$\cdots$}; 
        \node (4c) at ($1*(x) + 0*(y)$) {\rotatebox{0}{$\bullet_w$}}; 
        \node (42) at ($1.5*(x) + 0*(y)$) {$\cdots$};
        \node (5c) at ($2.5*(x) + 0*(y)$) {\rotatebox{0}{$\cdots$}}; 
        \node (3) at ($3*(x) + 0*(y)$) {$\bullet$}; 
        \node (4) at ($2*(x) + 1*(y)$) {$\bullet$}; 
        \node (5) at ($3*(x) + 1*(y)$) {$\bullet$}; 
        
        \draw[->] (1)--(1c);
        \draw[->] (1c)--(v);
        \draw[->] (4)--(2c);
        \draw[->] (2c)--(1);
        \draw[->] (u)--(4);
        \draw[->] (u)--(42);
        \draw[->] (41)--(v);
        \draw[-] (u)--(5c);
        \draw[-] (5c)--(3);
        \draw[->] (4)--(5);
        \draw[->] (42)--(4c);
        \draw[->] (4c)--(41);
\end{tikzpicture}  
\end{equation}
Especially, the smallest cases are given by 
\begin{equation}
\begin{tikzpicture}[baseline = 5mm]
        \coordinate (x) at (1.2, 0); 
        \coordinate (y) at (0, 1); 
        \node (v) at (0,0) {$\bullet_v$}; 
        \node (u) at ($(v) + 1.5*(x)$) {$\bullet_u$}; 
        \node (1) at ($(v) + 0*(x) + 1.5*(y)$) {$\bullet$}; 
        \node (3) at ($(u) + 1.5*(x) + 0*(y)$) {$\bullet$}; 
        \node (3c) at ($(u) + -0.75*(x) + 0*(y)$) {$\bullet_w$}; 
        \node (4) at ($(u) + 0*(x) + 1.5*(y)$) {$\bullet$};
        \node (5) at ($(u) + 1.5*(x) + 1.5*(y)$) {$\bullet$};
        
        \draw[->] (1)--(v);
        \draw[->] (4)--(1);
        \draw[->] (u)--(3c);
        \draw[->] (3c)--(v);
        \draw[->] (u)--(4);
        \draw[->] (3)--(u);
        \draw[->] (4)--(5);
\end{tikzpicture}  
\qand
\begin{tikzpicture}[baseline = 5mm]
        \coordinate (x) at (1.2, 0); 
        \coordinate (y) at (0, 1); 
        \node (v) at (0,0) {$\bullet_v$}; 
        \node (u) at ($(v) + 1.5*(x)$) {$\bullet_u$}; 
        \node (1) at ($(v) + 0*(x) + 1.5*(y)$) {$\bullet$}; 
        \node (3) at ($(u) + 1.5*(x) + 0*(y)$) {$\bullet$}; 
        \node (3c) at ($(u) + -0.75*(x) + 0*(y)$) {$\bullet_w$}; 
        \node (4) at ($(u) + 0*(x) + 1.5*(y)$) {$\bullet$}; 
        \node (5) at ($(u) + 1.5*(x) + 1.5*(y)$) {$\bullet$};
        
        \draw[->] (1)--(v);
        \draw[->] (4)--(1);
        \draw[->] (u)--(3c);
        \draw[->] (3c)--(v);
        \draw[->] (u)--(4);
        \draw[<-] (3)--(u);
        \draw[->] (4)--(5);
\end{tikzpicture}.
\end{equation}
\end{notation}

With this notation, we can rewrite Theorem \ref{thm:cont chains}(a) as follows. 

\begin{corollary} \label{cor:cont chains}
    Let $D$ be a finite simple graph in Notation \ref{notation:double} which is consisting of ordinary arrows and double arrows. 
    Let $P$ be a finite poset obtained from $D$ by replacing each double arrows of $D$ with equioriented $A_3$-type segment. 
    Then, any finite poset having diagram $D$ and containing $P$ as its full subposet has the same interval resolution global dimension as that of $P$. 
\end{corollary}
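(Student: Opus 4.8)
The plan is to obtain the statement as an iterated application of Theorem~\ref{thm:cont chains}(a), which is exactly how it is advertised in the text. First I would translate the hypotheses into combinatorial data: if a finite poset $\widetilde{P}$ has diagram $D$ and contains $P$ as a full subposet, then $\widetilde{P}$ is obtained from $D$ by replacing each double arrow $e$ with an equioriented $A_{m_e}$-type segment, and the requirement that the corresponding $A_3$-type segment of $P$ embed into $\widetilde{P}$ forces $m_e\ge 3$ for every $e$. Set $N(\widetilde{P}):=\sum_{e}(m_e-3)\ge 0$, the sum running over the double arrows of $D$. I would then argue by induction on $N(\widetilde{P})$: when $N(\widetilde{P})=0$ every double arrow is realised by an $A_3$-type segment, so $\widetilde{P}\cong P$ and there is nothing to prove.

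For the inductive step, choose a double arrow $e$ whose segment $\ell_1\to\ell_2\to\cdots\to\ell_{m}$ in $\widetilde{P}$ has $m=m_e\ge 4$. Since $D$ consists only of ordinary and double arrows, the interior vertices $\ell_2,\dots,\ell_{m-1}$ have degree $2$ in $\Hasse(\widetilde{P})$ and carry the equioriented orientation inherited from $e$; using this I would exhibit inside $\widetilde{P}$ an equioriented $A_4$-type segment in the sense of Definition~\ref{def:An-type segment} whose tail vertex is an interior point of the $e$-segment. Deleting that vertex and applying Theorem~\ref{thm:cont chains}(a) produces a full subposet $\widetilde{P}'$ with $\intresgldim\widetilde{P}=\intresgldim\widetilde{P}'$; moreover $\widetilde{P}'$ still has diagram $D$ (the $e$-segment is shortened to length $m-1\ge 3$, the others unchanged), still contains $P$ as a full subposet, and satisfies $N(\widetilde{P}')=N(\widetilde{P})-1$. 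The induction hypothesis gives $\intresgldim\widetilde{P}'=\intresgldim P$, and combining the two equalities closes the step.

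The point I expect to be the main obstacle is the verification, in that step, that a legitimate equioriented $A_4$-type segment is really available, i.e.\ that the endpoint degree conditions $\outdeg(\ell_1)=1$ and $\indeg(\ell_4)=1$ of Definition~\ref{def:An-type segment} can be met. When the $e$-segment is long this is automatic, since four consecutive interior vertices always form such a segment and can be reduced one at a time; the tight cases are the short ones $m\in\{4,5\}$, where any $A_4$-type segment inside $\widetilde{P}$ is forced to touch an endpoint of the $e$-segment — an original vertex of $D$ that can carry further incident arrows. To handle those I would take the $A_4$-type segment to start, or to end, at whichever endpoint of $e$ has out-degree (resp.\ in-degree) $1$ in $\Hasse(\widetilde{P})$, and, when this still fails, fall back on the $\Gamma_S$-description of Proposition~\ref{prop:formulas} and run a short case analysis on the position of an interval $S$ relative to $\{\ell_1,\dots,\ell_m\}$, parallel to cases (I)--(IV) in the proof of Theorem~\ref{thm:cont chains}(a), showing that each $\Gamma_S$ lies in $\mathcal{L}_{Q}$ for an aligned interior system $Q\subseteq\widetilde{P}$ with $\intresgldim Q=\intresgldim P$ and then invoking Corollary~\ref{cor:reduce_to_aligned}. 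Once this local bookkeeping is settled, the induction runs with no further difficulty.
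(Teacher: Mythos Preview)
Your plan is exactly the paper's: its entire proof is the single sentence ``It is straightforward from Theorem~\ref{thm:cont chains}(a),'' and your induction on $N(\widetilde P)=\sum_e(m_e-3)$ is the natural way to unpack that iteration. So at the level of strategy there is nothing to compare.

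The obstacle you single out is genuine and the paper does not address it. If a double arrow $a\Rightarrow b$ is expanded to length $m_e\in\{4,5\}$ and both $D$-vertices satisfy $\outdeg(a)>1$ and $\indeg(b)>1$ in $\Hasse(\widetilde P)$, then no equioriented $A_4$-type segment in the sense of Definition~\ref{def:An-type segment} lies inside that expansion, and one can build small examples (e.g.\ $D$ with ordinary arrows $a\to c$, $d\to b$ and one double arrow $a\Rightarrow b$) where no such segment exists anywhere in $\widetilde P$; Theorem~\ref{thm:cont chains}(a) then does not literally apply. So your caution is warranted, not excessive.

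Your proposed fallback---rerunning the $\Gamma_S$ case analysis with aligned interior systems $Q_i=\widetilde P\setminus\{v_i\}$---is the right instinct, but be aware that it cannot be copied verbatim from cases (I)--(IV). In the worst case $m_e=4$ with both endpoints bad, only the single $Q=\widetilde P\setminus\{v_3\}$ is aligned (the analogue of $Q_2$ fails (AL1) at $a$, and removing $b$ does not even give an interior system), and already for $S=\{v_2,v_3\}$ one computes $\Gamma_S\notin\mathcal L_Q$. So that residual case needs its own short argument rather than a transcription of the existing one; once you supply it, the induction closes as you describe.
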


\begin{proof}
    It is straightforward from Theorem \ref{thm:cont chains}(a). 
\end{proof}

This will be used in Example \ref{ex:int_res_dim_one} to provide a family of posets having interval resolution global dimension $1$.

We recall a classification of finite posets having interval resolution global dimension $0$ due to \cite[Theorem 5.1]{AET}.

\begin{proposition}  \label{prop:dim0}
    Let $P$ be a finite connected poset having at least two elements. 
    Then, the following conditions are equivalent. 
    \begin{enumerate}[\rm (a)]
        \item Every $P$-persistence module is interval-decomposable. 
        \item $\intresgldim P =0$.  
        \item $P$ is either an $A$-type poset or a bipath poset, that is, it has the diagram of the following form respectively. 
        \begin{equation}
        \begin{tikzpicture}[baseline = 0mm]
        \coordinate (x) at (1.5, 0); 
        \coordinate (y) at (0, 1); 
        \node (v) at (0,0) {$\bullet$}; 
        \node (u) at ($(v) + 1*(x)$) {$\bullet$}; 
        \draw[vecArrow] (v)--(u);
        \end{tikzpicture}
        \qand 
        \begin{tikzpicture}[baseline = 0mm]
        \coordinate (x) at (1.3, 0); 
        \coordinate (y) at (0, 1.3); 
        \node (v) at ($0*(x) + 0.5*(y)$) {$\bullet$}; 
        \node (vv) at ($0*(x) + -0.5*(y)$) {$\bullet$}; 
        \node (u) at ($1*(x) + 0.5*(y)$) {$\bullet$}; 
        \node (uu) at ($1*(x) + -0.5*(y)$) {$\bullet$.}; 
        \draw[vecArrow] (v)--(vv);
        \draw[vecArrow] (v)--(u);
        \draw[vecArrow] (vv)--(uu);
        \draw[vecArrow] (u)--(uu);
        \end{tikzpicture}
        \end{equation}
        
    \end{enumerate}
\end{proposition}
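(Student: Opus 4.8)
The equivalence of (a) and (b) is formal and uses only that $P$ is finite, so $\fprep P = \rep P$, together with the fact that interval covers are surjective, which holds because $\mathscr{I}_P$ contains all finitely generated projectives by Proposition~\ref{prop:proj as interval}. Indeed, if every $P$-persistence module is interval-decomposable, then $M \xrightarrow{\id} M$ is an interval cover with zero kernel for every $M$, so $\intresdim M = 0$ and $\intresgldim P = 0$; conversely, if $\intresgldim P = 0$, the interval cover $V_0 \to M$ of any $M$ is a surjection with zero kernel, hence an isomorphism, so $M \cong V_0$ is interval-decomposable. (For the "every module'' formulation, not merely the pfd one, one also invokes that a representation-finite algebra has all of its modules decomposing as direct sums of finite-dimensional, here interval, indecomposables, together with the fact that a finite poset has only finitely many intervals.)

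For (c)$\Rightarrow$(a) I identify $\rep P$ with $\mod k[P]$. If $P$ is an $A$-type poset, then $k[P]$ is the path algebra of an orientation of the Dynkin diagram $A_n$; it is hereditary, and by Gabriel's theorem its indecomposables are parametrized by the positive roots of $A_n$, i.e.\ by the intervals $\{i,\dots,j\}$, the corresponding modules being exactly the interval modules $\mathbb{I}_{\{i,\dots,j\}}$, so every $P$-module is interval-decomposable. If $P$ is a bipath poset with chains of lengths $p,q\ge 2$, then $k[P]$ is a representation-finite tilted algebra of type $A$ all of whose indecomposable modules are thin, hence interval modules. Concretely, one may also reach this using the machinery of this paper: apply Theorem~\ref{thm:cont chains}(a) repeatedly to an equioriented $A_4$-type segment lying strictly inside one of the two chains, which reduces---without changing $\intresgldim$---to the finitely many bipaths with $p,q\le 4$, and then check these directly from their (finite) Auslander--Reiten quivers.

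For (a)$\Rightarrow$(c) I argue by contraposition. Suppose the finite connected poset $P$ with at least two elements is neither $A$-type nor a bipath; I produce a $P$-module in $\fprep P=\rep P$ that is not interval-decomposable. The underlying undirected graph of $\Hasse(P)$ is connected, so it is either a path (excluded, this gives the $A$-type case), a cycle, or it has a vertex $x$ of degree at least $3$. In the last case $x$ together with three of its Hasse-neighbours spans a full subposet $S$ whose Hasse diagram is a star and whose incidence algebra is therefore $kD_4$ for the induced orientation (no commutativity relations arise, since between any two comparable elements of $S$ there is a unique directed path); as $D_4$ is not of type $A$, Gabriel's theorem provides an indecomposable $kD_4$-module $N$ with a non-thin dimension vector (e.g.\ the highest root, which carries the coefficient $2$ at the central vertex $x$). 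If instead $\Hasse(P)$ is a cycle then, $P$ not being a bipath, its cyclic orientation has at least two sources (equivalently at least two sinks), so again no commutativity relations occur and $k[P]$ is the path algebra of a non-cyclic orientation of the extended Dynkin diagram $\tilde A_n$, which is representation-infinite; since only finitely many $0/1$ dimension vectors exist, $k[P]=k[S]$ (with $S:=P$) then has a non-thin indecomposable $N$. In the star case I transport $N$ to $P$ via $N' := \Ind_S N$: because $\Res_S \circ \Ind_S \simeq \id$ by Proposition~\ref{prop:idemp_embed}(a) and $\Res_S$ sends interval-decomposable modules to interval-decomposable ones (restricting an interval module yields a direct sum of interval modules), $N'$ cannot be interval-decomposable. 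Either way (a) fails, and since $\intresgldim P$ is finite for finite $P$ by Proposition~\ref{prop:formulas}, the three statements are equivalent.

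The only genuinely non-formal step is the bipath case of (c)$\Rightarrow$(a): verifying that every indecomposable module over a bipath incidence algebra is an interval module. After the reduction via Theorem~\ref{thm:cont chains}(a) this becomes a finite check, but it is the one place where one must actually compute (or invoke the tilted-algebra-of-type-$A$ description); by contrast the $D_4$ and $\tilde A_n$ arguments for the converse, and the transport along $\Ind_S$/$\Res_S$, are short once the setup is in place.
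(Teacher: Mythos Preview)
The paper does not give its own proof of this proposition; it simply recalls the result from \cite[Theorem~5.1]{AET}. Your argument is therefore an independent proof rather than a reproduction of the paper's approach, and it is essentially correct: the formal equivalence (a)$\Leftrightarrow$(b), the contraposition for (a)$\Rightarrow$(c) via a $D_4$-star full subposet transported along $\Ind_S$ (using that $\Res_S$ preserves interval-decomposability, as the paper itself notes), and the $\tilde A$-cycle case are all sound.

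One genuine slip: a bipath incidence algebra is \emph{not} tilted of type $A$. Already the $2\times 2$ commutative square has exactly $11$ indecomposable modules, whereas a tilted algebra of a hereditary algebra $H$ has the same number of indecomposables as $H$ (the Brenner--Butler torsion pairs are splitting on both sides when $H$ is hereditary), and $kA_4$ has $10$ while $kD_4$ has $12$. So that sentence cannot carry the bipath case. Your alternative---reduce via Theorem~\ref{thm:cont chains}(a) until each chain has at most three interior vertices, then verify the remaining finitely many bipaths by hand---is valid; just make explicit that Theorem~\ref{thm:cont chains}(a) preserves $\intresgldim$ rather than interval-decomposability directly, so you are really proving (c)$\Rightarrow$(b) there and then invoking the already-established (b)$\Rightarrow$(a).
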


\subsection{Tree-type posets}

We say that a given finite poset $P$ is a \emph{tree-type poset} if the underlying graph of its Hasse diagram is a tree (i.e., a connected simple graph without cycles). 
We recall that a leaf of $P$ is an element $x\in P$ such that $\deg(x)=1$. 
We denote by $\leaf(P)$ the set of leaves of $P$. 

Our result is the following. 

\begin{proposition} \label{prop:dimTree}
    Let $P$ be a tree-type poset having at least two elements. Then we have 
    \[
    \intresgldim P = \#\leaf(P) - 2. 
    \]
    In particular, it is determined by the underlying graph of its Hasse diagram.  
\end{proposition}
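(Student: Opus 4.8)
The plan is to first reformulate the claimed value. For any finite tree $P$ with at least two elements, summing $\deg(v)-2$ over all vertices gives $2(|P|-1)-2|P|=-2$, and separating the contributions of the leaves ($\deg-2=-1$ each) from those of the remaining vertices shows $\#\leaf(P)-2=\sum_{v\in P}\max\{\deg(v)-2,\,0\}$; call this quantity $\beta(P)$, the total branching of $\Hasse(P)$ (in particular $\intresgldim P$ will then depend only on the underlying unoriented graph of $\Hasse(P)$). I will prove $\intresgldim P=\beta(P)$ by induction on $\beta(P)$. If $\beta(P)=0$ then every vertex of $\Hasse(P)$ has degree at most $2$, so $P$ is a path; every indecomposable $P$-persistence module is then an interval module and $\intresgldim P=0=\beta(P)$ by Proposition~\ref{prop:dim0}.

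For the inductive step suppose $\beta(P)\ge 1$, so $\Hasse(P)$ has a vertex of degree at least $3$. Pick any leaf $\ell$ of $P$ and let $b$ be the branch vertex of $\Hasse(P)$ nearest to $\ell$ (it exists, since otherwise $P$ would be a path); then the path from $\ell$ to $b$ is a pendant path, all of whose internal vertices have degree $2$. Let $P'\subseteq P$ be the full subposet obtained by deleting every vertex of this pendant path except $b$. Then $P'$ is a connected finite tree with $\beta(P')=\beta(P)-1$, since deleting the path lowers $\deg(b)$ by one and removes only degree-two vertices and the leaf $\ell$; by the induction hypothesis $\intresgldim P'=\beta(P)-1$. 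It therefore suffices to show $\intresgldim P=\intresgldim P'+1$; when convenient one may shorten the pendant path to at most three vertices beforehand via Theorem~\ref{thm:cont chains}(b) (possibly after reorienting its internal edges by the reflections of Proposition~\ref{prop:mutation_poset}), which affects neither side.

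For the inequality $\intresgldim P\ge\intresgldim P'+1$, the bound $\intresgldim P\ge\intresgldim P'$ is immediate from monotonicity of $\intresgldim$ under full subposet inclusions \cite{AET}; to gain the extra unit I would exhibit an interval $S$ of $P$ with $\intresdim\Gamma_S=\beta(P)$, where $\Gamma_S$ is as in \eqref{eq:GammaS_exact}. The candidate $S$ is a ``spread across all branches'': e.g.\ if $P$ is a star oriented with all edges toward its centre $b$, take $S=P=\mathbb I_{b^{\downarrow}}$, so that Proposition~\ref{prop:irred_interval} identifies $\mathcal A_S$ with the maps $\mathbb I_{S\setminus\{\ell\}}\hookrightarrow\mathbb I_S$ for $\ell$ a leaf, whence $\Gamma_S$ has $k^{\,\beta(P)}$ at each leaf and $k^{\,\beta(P)+1}$ at $b$ and a direct computation of its interval resolution gives $\intresdim\Gamma_S=\beta(P)$; in general one lifts an interval of $P'$ witnessing $\intresgldim P'=\beta(P)-1$ through $b$ into the new pendant path, forcing one additional interval cover. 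For the reverse inequality $\intresgldim P\le\beta(P)$, Proposition~\ref{prop:formulas} reduces the problem to showing $\intresdim\Gamma_S\le\beta(P)$ for every interval $S$ of $P$; here I would proceed as in the proof of Theorem~\ref{thm:cont chains}(a), attaching to each $S$ an aligned interior system $Q_S\subseteq P$ with $\Gamma_S\in\mathcal L_{Q_S}$ and $\beta(Q_S)\le\beta(P)$ — obtained by contracting the portion of the pendant path at $b$ that is ``not seen'' by $S$ or by the finitely many $S_\alpha$ with $\alpha\in\mathcal A_S$ — and then invoking Lemma~\ref{lem:supQS} together with the induction hypothesis applied to the smaller trees $Q_S$. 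The main obstacle is exactly this last step: controlling $\intresdim\Gamma_S$ uniformly in $S$, i.e.\ verifying that every $\Gamma_S$ really is induced along an aligned interior system from a tree with no more total branching than $P$. By comparison the lower bound and the reduction of the pendant path are routine bookkeeping with Proposition~\ref{prop:irred_interval}.
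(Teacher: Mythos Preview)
Your reformulation $\#\leaf(P)-2=\sum_v\max\{\deg(v)-2,0\}$ is correct and the base case is fine, but the inductive step has a genuine gap that you yourself flag: the upper bound is not established. The difficulty is not merely technical. Lemma~\ref{lem:supQS} gives $\intresgldim P=\sup_S\intresgldim Q_S$, so to conclude $\intresgldim P\le\beta(P)$ by induction you would need, for \emph{every} interval $S$, an aligned interior system $Q_S\subsetneq P$ with $\Gamma_S\in\mathcal L_{Q_S}$ and $\beta(Q_S)\le\beta(P)$; but for the intervals $S$ that actually attain the maximum (those ``seeing'' the entire pendant path together with every branch at $b$), no proper contraction along that path keeps $\Gamma_S$ in its image, so you are forced back to $Q_S=P$ and the induction stalls. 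Your lower-bound sketch for a general tree (``lift an interval of $P'$ through $b$, forcing one additional interval cover'') is also only a heuristic: you have not argued why the lifted $\Gamma_S$ has interval resolution dimension exactly one more.

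The paper bypasses induction altogether and computes $\intresdim\Gamma_S$ directly for every interval $S$. Set $\mathcal I=\min(S)\cap\leaf(S)$, $\mathcal J=\{z\in P\mid S\lessdot z\}$, and $\mathcal K=\mathcal I\sqcup\mathcal J$ with $m=\#\mathcal K$. Because $P$ is a tree, for each $\sigma\subseteq\mathcal K$ the set $S^\sigma:=(S\setminus(\sigma\cap\mathcal I))\cup\bigcup_{z\in\sigma\cap\mathcal J}\rangle S,z\rangle$ is again an interval, and Proposition~\ref{prop:irred_interval} identifies the irreducible maps among the $\mathbb I_{S^\sigma}$ with the covering relations in the Boolean lattice on $\mathcal K$. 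The alternating-sum Koszul complex on this cube gives an exact sequence $0\to V_m\to\cdots\to V_1\to V_0=\mathbb I_S\to 0$ with $V_i=\bigoplus_{\#\sigma=i}\mathbb I_{S^\sigma}$, and truncating at $\Gamma_S=\ker(V_1\to V_0)$ yields its interval resolution of length $m-2$. Since always $m\le\#\leaf(P)$, with equality for $S=P\setminus(\max(P)\cap\leaf(P))$, both inequalities follow at once from Proposition~\ref{prop:formulas}. If you want to salvage your strategy, this explicit resolution is precisely the missing ingredient: it supplies the uniform bound $\intresdim\Gamma_S\le\#\leaf(P)-2$ that your contraction argument could not reach.
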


\begin{proof}
    Let $S$ be an interval of $P$. 
    We regard $S$ as a full subposet of $P$. 
    From Proposition \ref{prop:formulas}, we will compute the interval resolution dimension of $\Gamma_S$. 
    If $S=\{v\}$ for an element $v\in \max(P)\cap \leaf(P)$, there are no irreducible morphisms ending at $\mathbb{I}_S$. 
    Thus, we have $\Gamma_S=0$. 
    Otherwise, we will describe an interval resolution of $\Gamma_S$ as follows.
    Let $\mathcal{I}:=\min(S)\cap \leaf(S)$ and $\mathcal{J}:=\{z\in P\mid S \lessdot z\}$. 
    In addition, let $\mathcal{K}:=\mathcal{I}\sqcup \mathcal{J}$ with $m:=\#\mathcal{K} \leq \#\leaf(P)$. 
    For any $\sigma \subseteq \mathcal{K}$, the set 
    \[
    S^{\sigma} := S \setminus (\sigma\cap \mathcal{I}) \cup \bigcup_{z\in \sigma\cap \mathcal{J}} \rangle S,z\rangle 
    \]
    is an interval of $P$ since $P$ is of the tree. 
    Thus, one obtains an interval module $\mathbb{I}_{S^{\sigma}}$. 
    By Proposition~\ref{prop:irred_interval}, 
    every $s\in \sigma$ gives an irreducible morphism $\alpha_{\sigma,s}\colon \mathbb{I}_{S^{\sigma}} \to \mathbb{I}_{S^{\sigma\setminus \{s\}}}$, 
    where it is injective (resp., surjective) if $s\in \mathcal{I}$ (resp., $s\in \mathcal{J}$). 
    Moreover, it follows from Proposition~\ref{prop:irred_interval} that every irreducible morphism starting at $\mathbb{I}_{S^\sigma}$ can be written in this way. 
    Dually, we get a similar description for irreducible morphisms ending at $\mathbb{I}_{S^\sigma}$. 
    
    Now, we choose a total order $\prec$ on $\mathcal{K}$. 
    For each $0\leq i \leq m$, let 
    \[
    V_i := \bigoplus_{\substack{\sigma\subseteq \mathcal{K} \\ \#\sigma =i}} \mathbb{I}_{S^\sigma}. 
    \]
    For example $V_0=\mathbb{I}_{S^{\emptyset}}=\mathbb{I}_{S}$ and $V_1 =\bigoplus_{\alpha\in \mathcal{A}_S} \mathbb{I}_{S_{\alpha}}$ (see Section \ref{subsec:Thm_chain(a)}). 
    In addition, we define a morphism $\varphi_i\colon V_i\to V_{i-1}$ 
    as the alternating sum $\varphi_{i} = \sum_{j=0}^{i-1} (-1)^{j} \varphi_{ij}$, where for each $0\leq j \leq i-1$, a morphism $\varphi_{ij}\colon V_i\to V_{i-1}$ is defined by mapping the summand $\mathbb{I}_{S^{\sigma}}$ with $\sigma=\{s_0\prec \cdots \prec s_{i-1}\}$ to the summand $\mathbb{I}_{S^{\sigma\setminus\{s_j\}}}$ via the map $\alpha_{\sigma,s_j}$. 
    Under these notation, it is not difficult to see that
    $\varphi_1$ is surjective, 
    $\varphi_m$ is injective, and $\im \varphi_i \cong \ker \varphi_{i-1}$ for all $i\in \{2,\ldots,m\}$. 
    Thus, they yield an exact sequence 
    \begin{equation}\label{eq:boolian1}
        0\to V_m \xrightarrow{\varphi_m} V_{m-1}\xrightarrow{\varphi_{m-1}}
        \cdots \cdots \xrightarrow{\varphi_2} V_1 \xrightarrow{\varphi_1} V_0 \to 0.
    \end{equation}
    Since $\Gamma_S = \ker \varphi_1$ by definition, 
    \begin{equation}\label{eq:boolian2}
        0\to V_m \xrightarrow{\varphi_m} V_{m-1}\xrightarrow{\varphi_{m-1}}
        \cdots \cdots \xrightarrow{\bar{\varphi}_2} \Gamma_S\to 0
    \end{equation}
    is an interval resolution of $\Gamma_S$ and hence $\intresdim \Gamma_S = m-2$.
    
    From the above observation, we obtain the inequality 
    \begin{equation}
        \intresgldim P \overset{\rm Prop.\,\ref{prop:formulas}}{=} \sup \{\intresdim \Gamma_S \mid S\in \Int(P)\} \leq \#\leaf(P) - 2. 
    \end{equation}
    Furthermore, the equality holds true for an interval $S = P \setminus (\max(P) \cap \leaf(P))$ as 
    $\mathcal{I} = \min(P) \cap \leaf(P) = \leaf(P) \setminus \max(P)$ and $\mathcal{J} = \max(P)\cap \leaf(P)$.
    Therefore, we get the assertion. 
\end{proof}

\subsection{$\tilde{A}$-type posets} 
\label{sec:Atilde}

Consider an $\tilde{A}$-type poset $P$. 
That is, it is a poset having a diagram of the form 
\begin{equation*} 
    \begin{tikzpicture}[baseline = 0mm]
        \node (a1) at (90:1.2) {$t_1$};
        \node (a2) at (90-60:1.2) {$t_2$};
        \node (a3) at (90-120:1.2) {$t_3$};
        \node (a4) at (90-180:1.2) {};
        \node (as) at (90+60:1.2) {$t_{2s}$};
        \node (ap) at (90+120:1.2) {};
        \draw[vecArrow] (a1)--(as);
        \draw[vecArrow] (a1)--(a2);
        \draw[vecArrow] (a3)--(a2);
        \draw[vecArrow] (a3)--(a4);
        \draw[vecArrow] (ap)--(as);
        \draw[dotted] (a4)--(ap);
    \end{tikzpicture}
\end{equation*}
where $s\geq 2$ is the number of sinks of $P$, and $t_i$'s are sinks (resp., sources) for even (resp., odd) $i$. 

\begin{proposition} \label{prop:dimAtilde}
    Suppose that $P$ is an $\tilde{A}$-type poset with $s\geq 2$ sinks as above. 
    Then, we have 
    \begin{equation}
        \intresgldim P = 
        \begin{cases}
            1 & \text{if $s = 2$}, \\  
            2 & \text{if $s \geq 3$}. 
        \end{cases}
    \end{equation}
    In particular, it is determined by the number of sinks.
\end{proposition}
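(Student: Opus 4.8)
The plan is to compute $\sup\{\intresdim\Gamma_S\mid S\in\Int(P)\}$, which equals $\intresgldim P$ by Proposition~\ref{prop:formulas}. It is harmless to first apply Theorem~\ref{thm:cont chains}(a) to shrink every maximal equioriented segment of $P$ to an $A_3$-type segment (Corollary~\ref{cor:cont chains}), so that it suffices to treat a ``canonical'' $\tilde A$-type poset $P_s$: its Hasse diagram is a cycle with sources $z_1,\dots,z_s$, sinks $w_1,\dots,w_s$, and a few intermediate vertices, alternately oriented. Since $\Hasse(P_s)$ is a cycle without commutative squares, $k[P_s]$ is a tame hereditary algebra of type $\tilde A$, and Proposition~\ref{prop:formulas} guarantees $\intresgldim P_s$ is finite; moreover every interval of $P_s$ is an arc of the cycle, so only two cases arise.

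\emph{Proper arcs.} For a proper arc $S$ I would describe $\mathcal A_S$ via Proposition~\ref{prop:irred_interval}: as $\Hasse(P_s)$ is a cycle with no diamonds, the irreducible morphisms into $\mathbb I_S$ are localised at the (at most two) ends of $S$ — surjective ones where the arc can be extended upward, injective ones at minimal endpoints of $S$ (interior local minima of $S$ have degree two, hence are never leaves of $S$). Running the Boolean-complex construction from the proof of Proposition~\ref{prop:dimTree}, while checking at each stage that the subsets produced remain intervals of the \emph{cyclic} poset, I expect $\intresdim\Gamma_S\le 1$ for every proper arc, with equality reached for a suitable $S$ when $s=2$ — for instance $S=P_s\setminus\{w\}$ with $w$ a sink, for which $\mathcal A_S$ has one surjection and two injections and $\Gamma_S$ has a non-split interval cover, so $\intresdim\Gamma_S=1$.

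\emph{The full poset.} For $S=P_s$ the module $\mathbb I_{P_s}$ has dimension vector the null root $\delta$ of $\tilde A$, hence is regular; the only irreducible morphisms into it are the $s$ injections $\mathbb I_{P_s\setminus\{z_i\}}\hookrightarrow\mathbb I_{P_s}$, so $\Gamma_{P_s}=\ker\!\big(\bigoplus_{i=1}^{s}\mathbb I_{P_s\setminus\{z_i\}}\to\mathbb I_{P_s}\big)$, which has dimension $s-2$ at each source and $s-1$ at each remaining vertex. When $s=2$ this kernel splits as a direct sum of two interval modules, one supported on each arc between consecutive sources, so $\intresdim\Gamma_{P_2}=0$; together with the proper-arc bound this gives $\intresgldim P_2\le 1$, and the lower bound $\intresgldim P_2\ge1$ follows either from the proper arc above or from Proposition~\ref{prop:dim0}, since $P_2$ is connected with more than two elements and is neither an $A$-type nor a bipath poset. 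When $s\ge3$, $\Gamma_{P_s}$ is not interval-decomposable — the incoming structure maps at each sink are surjective onto an $(s-1)$-dimensional space, which rules out a decomposition with the required dimension vector — and computing its interval cover together with the next syzygy (which turns out to be interval-decomposable) yields $\intresdim\Gamma_{P_s}=2$. A case check over all proper arcs shows none does better, so $\intresgldim P_s=2$ for $s\ge3$, while for $s=2$ the supremum is $1$, attained on a proper arc and not on $P_s$.

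\emph{Expected main obstacle.} The delicate point is the \emph{uniform upper bound}: verifying that no interval $S$ produces $\intresdim\Gamma_S\ge3$ (resp.\ $\ge2$ when $s=2$). For proper arcs this is a finite but lengthy case analysis in the style of the proof of Theorem~\ref{thm:cont chains}(a); for $S=P_s$ it amounts to controlling the interval resolution of the explicit regular module $\Gamma_{P_s}$, where Auslander--Reiten theory for the tame hereditary algebra $k[P_s]$ — identifying the tube containing $\mathbb I_{P_s}$, equivalently computing $\tau\mathbb I_{P_s}$ and using Proposition~\ref{prop:formulas}(ii) — is the natural tool, since the dimension count alone does not pin the module down.
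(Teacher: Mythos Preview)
Your proposal misidentifies which interval attains $\intresdim\Gamma_S=2$, and both of your key computations are incorrect.

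For the full poset $S=P$ with $s\ge 3$, the module $\Gamma_P$ is in fact interval-decomposable in the smallest case: for $s=3$ and $n=6$ one checks directly that $\Gamma_P\cong \rP_{t_1}\oplus\rP_{t_3}\oplus\rP_{t_5}$, so $\intresdim\Gamma_P=0$. Your joint-surjectivity argument at sinks does not preclude an interval decomposition --- at each sink the two adjacent indecomposable projectives contribute complementary one-dimensional summands. The paper asserts (and leaves as a routine check) that $\intresdim\Gamma_T\le 1$ for every interval $T$ other than the arcs $S_i$ below.

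Conversely, your expectation $\intresdim\Gamma_S\le 1$ for proper arcs fails. Take the arc $S_i=P\setminus t_i^{\uparrow}$ with $t_i$ a source. There are exactly two (both surjective) interval-irreducible morphisms into $\mathbb I_{S_i}$, coming from $x=t_{i-1}$ and $x=t_{i+1}$; the tree-type Boolean complex would have top term $\mathbb I_{S_i\cup{\rangle}S_i,t_{i-1}{\rangle}\cup{\rangle}S_i,t_{i+1}{\rangle}}=\mathbb I_P$, and indeed all the subsets involved are intervals. But the complex is \emph{not} exact: the two extended arcs overlap on all of $t_i^{\uparrow}$, and $\mathbb I_P$ records that overlap only with multiplicity one. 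The true kernel $\Gamma_{S_i}$ is the string module $M([t_i,t_i]^1)$ wrapping once around the cycle --- the sequence is in fact the almost split sequence ending at $\mathbb I_{S_i}$, so $\Gamma_{S_i}=\tau\mathbb I_{S_i}$ --- and this is not interval-decomposable. The paper then computes its interval cover explicitly, shows the next kernel $M([t_{i+3},t_{i-3}]^1)$ lies in the preprojective component, and invokes Lemma~\ref{lem:resol preproj} to obtain $\intresdim\Gamma_{S_i}=2$. So the maximum is attained on a proper arc, not on $P$, and the Boolean-lattice exactness from the tree case does not survive the passage to cycles even when every $S^{\sigma}$ remains an interval.
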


We need some preparations. 
Let $P$ be as above. In addition, let $n$ be the number of elements of $P$. 
For our convenience, the indexes of sinks and sources will be considered modulo $2s$, for instance, $t_i$ and $t_{i+2s}$ show the same element in $P$. 
We define a permutation $\rho$ on $P$ as follows: 
If an element $a$ lies in the segment between $t_i$ and $t_{i+1}$, then $\rho(a)$ is such that $a\lessdot \rho(a)$ (resp., $\rho(a) \lessdot a$) if $t_i\leq t_{i+1}$ (resp., $t_{i+1} \leq t_i$). 
This means that $\rho$ is a rotation of elements of $P$ in clockwise direction under the above embedding into the plane. 

Next, we recall some properties of the incidence algebra $k[P]$, see Section \ref{sec:stabilizing}. 
Since it is a hereditary algebra of type $\tilde{A}$, 
its module category is divided into three classes of components called the preprojective component, preinjective component and regular components.  
Here, the preprojective (resp., preinjective) component is the $\tau$-orbits of indecomposable projective (resp., indecomposable injective) modules and all other components are regular. 
We denote by $\mathcal{P}$, $\mathcal{Q}$, $\mathcal{R}$ 
the additive subcategories generated by all modules in the preprojective, preinjective, regular components respectively. 
They are located at the left, right, middle in the AR-quiver respectively as (see \cite[Chapter VIII, Theorem 4.5]{ASS})
\begin{equation}
    \Hom_P(\mathcal{R}, \mathcal{P}) = \Hom_P(\mathcal{Q}, \mathcal{P}) = \Hom_P(\mathcal{Q}, \mathcal{R}) = 0.
\end{equation}


\begin{lemma}\label{lem:resol preproj}
    Every non-interval-decomposable module in $\mathcal{P}$ has interval resolution dimension $1$. 
\end{lemma}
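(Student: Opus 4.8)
The plan is to work inside the tame hereditary incidence algebra $k[P]$ and to exploit the partition of $\mod k[P]$ into $\mathcal{P}$, $\mathcal{Q}$, $\mathcal{R}$. Since $M$ is not interval-decomposable it is in particular not an interval module, while every indecomposable projective is an interval module $\mathbb{I}_{a^{\uparrow}}$ (Proposition~\ref{prop:proj as interval}); hence $M$ is preprojective but \emph{not} projective, and $\intresdim M\ge 1$ is immediate. It remains to prove $\intresdim M\le 1$, i.e. that the kernel of an interval cover of $M$ is already interval-decomposable.

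Let $\varphi\colon V\to M$ be an interval cover, write $V=\bigoplus_i\mathbb{I}_{S_i}$ and set $K:=\ker\varphi$. First I would locate $V$ and $K$ in the three classes. Each $\mathbb{I}_{S_i}$ is indecomposable and admits a non-zero map to $M\in\mathcal{P}$, so the vanishing $\Hom_P(\mathcal{R}\oplus\mathcal{Q},\mathcal{P})=0$ forces every $\mathbb{I}_{S_i}$ to be preprojective; thus $V$ is a finite direct sum of preprojective interval modules. Next, $M$ being indecomposable preprojective and non-projective, $\tau M$ is an indecomposable preprojective module, so by the Auslander--Reiten formula $\Ext^1_P(M,Y)$ is a subquotient of $D\,\Hom_P(Y,\tau M)$, which is zero whenever $Y\in\mathcal{R}\oplus\mathcal{Q}$ (again $\Hom_P(\mathcal{R}\oplus\mathcal{Q},\mathcal{P})=0$). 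Splitting $K=K'\oplus K''$ into its preprojective part $K'$ and its regular/preinjective part $K''$, the class of $0\to K\to V\to M\to 0$ lies in $\Ext^1_P(M,K')$; hence $K''$ splits off $V$ with $\varphi$ vanishing on it, and the right minimality of the interval cover forces $K''=0$. So $K$ is itself a direct sum of preprojective indecomposables.

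The remaining, and principal, step is to show this preprojective module $K$ is interval-decomposable, i.e. a direct sum of (preprojective) interval modules. In the smallest cases — for instance when $|P|=4$ — the only preprojective interval modules are the indecomposable projectives, so $\varphi$ is forced to be the projective cover of $M$ and $K$ is its (projective, hence interval) first syzygy; in general one must account for preprojective interval modules that are not projective. Here I would pass to the Galois covering $\pi\colon\tilde P\to P$ by the bi-infinite zigzag (``fence'') poset $\tilde P$ of type $A_\infty^\infty$, whose incidence algebra is hereditary and over which every finitely presentable module is interval-decomposable, realize $M$ as the push-down $\pi_\bullet\mathbb{I}_{\tilde S}$ of a finite interval module on $\tilde P$ (the standard covering description of preprojective indecomposables of a tame hereditary algebra of type $\tilde A$), and use that covering functors are exact and send indecomposable projectives to indecomposable projectives. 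Pushing down a two-term projective resolution $0\to\tilde Q_1\to\tilde Q_0\to\mathbb{I}_{\tilde S}\to 0$ over $\tilde P$ yields an exact sequence $0\to\pi_\bullet\tilde Q_1\to\pi_\bullet\tilde Q_0\to M\to 0$ with both $\pi_\bullet\tilde Q_j$ direct sums of indecomposable projectives of $P$, hence interval modules. If one checks that $\pi_\bullet\tilde Q_0\to M$ is a right $\mathscr{I}_P$-approximation, then passing to its right minimal version recovers the interval cover $\varphi$ and exhibits $K$ as a direct summand of the interval-decomposable module $\pi_\bullet\tilde Q_1$, giving $\intresdim M\le 1$, hence $=1$.

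The hard part I expect is exactly this last verification: that the push-down of the projective cover over $\tilde P$ is an $\mathscr{I}_P$-approximation, equivalently that every morphism $\mathbb{I}_S\to M$ from an interval module lifts along it. This is where the ``long'' intervals of $P$ (those wrapping around the cycle, whose modules are not thin) must be handled, using the rotation $\rho$ together with the covering description of preprojective indecomposables; the $\Ext$-vanishing bookkeeping of the second paragraph is what keeps this lifting computation finite and tractable and is what ultimately closes the argument at $\intresdim M=1$.
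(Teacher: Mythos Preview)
Your first two paragraphs are correct: $\intresdim M\ge 1$ since $M$ is non-projective, and the interval cover $V$ lies in $\mathcal{P}$ because $\Hom(\mathcal{R}\cup\mathcal{Q},\mathcal{P})=0$. (Your $\Ext$-argument placing $K$ in $\mathcal{P}$ is also correct, though the paper does not need it.)

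The covering strategy, however, cannot close the argument. What you are really asking is that the projective cover $p\colon P_0\to M$ be a right $\mathscr{I}_P$-approximation, and this fails as soon as $s\ge 3$. Take the $6$-cycle with alternating orientation (sources $t_1,t_3,t_5$, sinks $t_2,t_4,t_6$). Then $\tau^{-1}\rP_{t_2}=\coker(\rP_{t_2}\hookrightarrow \rP_{t_1}\oplus \rP_{t_3})$ is the interval module $\mathbb{I}_S$ with $S=\{t_6,t_1,t_2,t_3,t_4\}$, preprojective but not projective, and one checks directly that $\Hom(\tau^{-1}\rP_{t_2},\rP_b)=0$ for every vertex $b$. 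Hence the irreducible map $\tau^{-1}\rP_{t_2}\to\tau^{-1}\rP_{t_1}$ is a non-zero morphism from an interval module to the non-interval preprojective module $M:=\tau^{-1}\rP_{t_1}$ that factors through no projective; so $p$ is not an $\mathscr{I}_P$-approximation, and $K$ is not a summand of $\ker p$. The bookkeeping you anticipate for ``long'' intervals does not rescue this: the obstruction is that $\mathcal{P}$ already contains non-projective interval modules admitting no non-zero map to any projective.

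The paper's route is shorter and bypasses this obstacle. Having placed $V\in\mathcal{P}\cap\mathscr{I}_P$, it observes (by inspection of the AR-quiver of type $\tilde A$) that $\mathcal{P}\cap\mathscr{I}_P$ is closed under taking submodules. Then $K\subseteq V$ lies in $\mathcal{P}\cap\mathscr{I}_P$ automatically, and $\intresdim M=1$ follows.
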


\begin{proof}
      By looking at the AR-quiver, it is easily shown that $\mathcal{P}\cap \mathscr{I}_P$ is closed under taking submodules. 
      Then, the assertion follows from the fact that 
      the interval cover of a module in $\mathcal{P}$ is provided by interval-decomposable modules in $\mathcal{P}\cap \mathscr{I}_P$. 
\end{proof}

Next, we recall that $k[P]$ is a string algebra (we refer to \cite{Erdmann} for fundamental results on string algebras). 
In particular, one can describe the AR-quiver by using the so-called string-band combinatorics. We will use it with the following notations. 
For two element $a,b\in P$, we define sequences 
\begin{equation*}
    [a,b]^0 := (a,\rho(a),\ldots, \rho^{\ell}(a) = b) 
    \qand
    [a,b]^1 := (a,\rho(a),\ldots, \rho^{n+\ell}(a) = b) 
\end{equation*}
of length $\ell+1$ and $\ell+n+1$ respectively, where $\ell$ is the smallest number $j\geq 0$ satisfying $b = \rho^j(a)$. 
A sequence $[a,b]^i$ for $i\geq2$ is defined in a similar way. 
Then, we naturally regard it as a string of $P$ (more precisely $\Hasse(P)$), and we write $M([a,b]^i)$ for the corresponding string module. 
Notice that such a string module belongs to $\mathcal{P}$, $\mathcal{Q}$, $\mathcal{R}$ depending on the positions of $a$ and $b$, but we omit the detail. 
For example, $M([a,b]^0)$ is an interval module if and only if $a=b$ or $a,b$ are incomparable in $P$, 
and every interval module except for $\mathbb{I}_P$ can be written in this way. 
On the other hand, the interval module $\mathbb{I}_P$ is provided as a band module and lies in $\mathcal{R}$. 

Under the above preparations, we show Proposition \ref{prop:dimAtilde} by computing interval resolutions directly.

\begin{proof}[Proof of Proposition \ref{prop:dimAtilde}]
The claim for $s=2$ can be shown by using Proposition \ref{thm:cont chains}.

We consider the case $s\geq 3$. 
For each $i\in \{1,\ldots,2s\}$, let $S_i := P\setminus {t_i}^{\uparrow}$ be an interval of $P$. 
From now on, we show that $\intresdim \Gamma_{S_i}=2$ if $i$ is odd. 
Suppose that $i$ is odd. 
Recalling $t_i$ being a source of $P$, 
one can write $\mathbb{I}_{S_i} \cong M([\rho(t_{i+1}),\rho^{-1}(t_{i-1})]^0)$ as a string module. 
From a description of irreducible morphisms in Proposition \ref{prop:irred_interval}, 
there are two surjective irreducible morphisms that yield an exact sequence 
\begin{equation}
    0 \to \Gamma_{S_i} \to 
    M([t_i,\rho^{-1}(t_{i-1})]^0) \oplus M([\rho(t_{i+1}),t_i]^0) \xrightarrow{(\alpha_1\, \alpha_2)} 
    \mathbb{I}_{S_i} \to 0. 
\end{equation}
Besides, this sequence is an almost split sequence in the module category.
In particular, 
$\Gamma_{S_i} \cong \tau\mathbb{I}_{S_i} \cong M([t_i,t_i]^1)$
holds, where we use a description of $\tau$ for string modules (see \cite{BR}) for the latter isomorphism. 
For this module, we can compute the interval cover and its kernel by a direct calculation of homomorphisms as 
\begin{equation}
    0 \to M([t_{i+3}, t_{i-3}]^{1}) \to 
    M([t_{i+3},t_{i}]^0) \oplus M([t_{i},t_{i-3}]^0) \oplus \mathbb{I}_P \xrightarrow{(\beta_1\, \beta_2\, \beta_3)} \Gamma_{S_i} \to 0 
\end{equation}
where $\beta_1,\beta_2$ are natural injections and $\beta_3$ is a natural surjection. 
Then, the kernel $M([t_{i+3}, t_{i-3}]^{1})$ is a non-interval module lying in $\mathcal{P}$ and has interval resolution dimension $1$ by Lemma \ref{lem:resol preproj}. 
Hence, we conclude that $\intresdim \Gamma_{S_i} = 2$. 

On the other hand, for other intervals $T$ of $P$, 
by using a description of irreducible morphisms in Proposition \ref{prop:irred_interval},
one can check that $\intresdim \Gamma_T \leq 1$ holds. 
This finishes the proof. 
\end{proof}

\subsection{Finite posets having low interval resolution global dimensions}
\label{subsec:low intresgildim}
In this section, we study finite posets having low interval resolution global dimension $d$. 
For our convenience, we assume that $k$ is a finite field with two elements throughout this section. 
The case $d=0$ has been done by \cite[Theorem 5.1]{AET}. 
However, the cases $d \geq 1$ may be much complicated. 
We give some examples for the case $d=1$. 

\begin{example}\label{ex:int_res_dim_one}
Any poset having one of diagrams (1)--(21) (see Notation \ref{notation:double}) in Table \ref{tab:gldim1} has interval resolution global dimension $1$, where we put labels on the vertices of diagrams (15)--(21) to refer to them later. 
In fact, the claim follows from Corollary \ref{cor:cont chains}.
More precisely, for each diagram $D$ in the table, 
we can compute that the poset obtained from $D$ by 
replacing each double arrows of $D$ with equioriented $A_3$-type segment has interval resolution global dimension $1$.

\begin{table}[ht]
    \centering
    \renewcommand{\arraystretch}{1.5}
    \begin{tabular}{cccccc}
    {(1)} & {(2)} & {(3)} \\ 
\begin{tikzpicture}[baseline = 0mm]
    \coordinate (x) at (0.8, 0); 
    \coordinate (y) at (0, 0.8); 
    \node (1) at ($-1*(x) + 1*(y)$) {$\bullet$}; 
    \node (2) at ($-1*(x) + -1*(y)$) {$\bullet$}; 
    \node (3) at ($-2.5*(x) + -1*(y)$) {$\bullet$}; 
    \node (4) at ($1*(x) + 1*(y)$) {$\bullet$}; 
    \node (5) at ($1*(x) + -1*(y)$) {$\bullet$}; 
    \draw[vecArrow] (1)--(4);
    \draw[vecArrow] (1)--(2);
    \draw[vecArrow] (2)--(5);
    \draw[vecArrow] (4)--(5);
    \draw[noArrow] (3)--(2);

\end{tikzpicture} 
    &  
\begin{tikzpicture}[baseline = 0mm]
    \coordinate (x) at (0.8, 0); 
    \coordinate (y) at (0, 0.8); 
    \node (v) at ($-1*(x) + 0*(y)$) {$\bullet$}; 
    \node (u) at ($1*(x) + 0*(y)$) {$\bullet$}; 
    \node (1) at ($-1*(x) + 1*(y)$) {$\bullet$}; 
    \node (2) at ($-1*(x) + -1*(y)$) {$\bullet$}; 
    \node (3) at ($-2.5*(x) + 0*(y)$) {$\bullet$}; 
    \node (4) at ($2.5*(x) + 0*(y)$) {$\bullet$}; 
    \node (5) at ($1*(x) + 1*(y)$) {$\bullet$}; 
    \node (6) at ($1*(x) + -1*(y)$) {$\bullet$}; 
    \draw[->] (v)--(1); 
    \draw[->] (v)--(2);
    \draw[->] (5)--(u); 
    \draw[->] (6)--(u);
    \draw[vecArrow] (1)--(5); 
    \draw[vecArrow] (2)--(6); 
    \draw[noArrow] (3)--(v); 
    \draw[noArrow] (4)--(u); 
    
\end{tikzpicture}  
    & 
\begin{tikzpicture}[baseline = 0mm]
    \coordinate (x) at (0.8, 0); 
    \coordinate (y) at (0, 0.8); 
    \node (1) at ($-1*(x) + 1*(y)$) {$\bullet$}; 
    \node (2) at ($-1*(x) + -1*(y)$) {$\bullet$}; 
    \node (3) at ($-2.5*(x) + -1*(y)$) {$\bullet$}; 
    \node (4) at ($2.5*(x) + 1*(y)$) {$\bullet$}; 
    \node (5) at ($1*(x) + 1*(y)$) {$\bullet$}; 
    \node (6) at ($1*(x) + -1*(y)$) {$\bullet$}; 
    \draw[vecArrow] (1)--(5);
    \draw[vecArrow] (1)--(2);
    \draw[vecArrow] (6)--(2);
    \draw[vecArrow] (6)--(5);
    \draw[noArrow] (3)--(2); 
    \draw[noArrow] (4)--(5); 
    
\end{tikzpicture} 
    \\ 
   {(4)} & {(5)} & {(6)}
    \\ 
\begin{tikzpicture}[baseline = 0mm]
    \coordinate (x) at (0.8, 0); 
    \coordinate (y) at (0, 0.8); 
    \node (v) at ($-1*(x) + 0*(y)$) {$\bullet$}; 
    \node (u) at ($1*(x) + 0*(y)$) {$\bullet$}; 
    \node (1) at ($-1*(x) + 1*(y)$) {$\bullet$}; 
    \node (2) at ($-1*(x) + -1*(y)$) {$\bullet$}; 
    \node (3) at ($-2.5*(x) + 0*(y)$) {$\bullet$}; 
    \node (4) at ($2.5*(x) + 1*(y)$) {$\bullet$}; 
    \node (5) at ($1*(x) + 1*(y)$) {$\bullet$}; 
    \node (6) at ($1*(x) + -1*(y)$) {$\bullet$}; 
    \draw[->] (v)--(1); 
    \draw[->] (v)--(2);
    \draw[->] (5)--(u); 
    \draw[->] (6)--(u);
    \draw[vecArrow] (1)--(5); 
    \draw[vecArrow] (2)--(6); 
    \draw[noArrow] (3)--(v); 
    \draw[noArrow] (4)--(5); 
\end{tikzpicture}  
    & 
\begin{tikzpicture}[baseline = 0mm]
    \coordinate (x) at (0.8, 0); 
    \coordinate (y) at (0, 0.8); 
    \node (1) at ($-1*(x) + 1*(y)$) {$\bullet$}; 
    \node (2) at ($-1*(x) + -1*(y)$) {$\bullet$}; 
    \node (3) at ($-2.5*(x) + 1*(y)$) {$\bullet$}; 
    \node (4) at ($2.5*(x) + 1*(y)$) {$\bullet$}; 
    \node (5) at ($1*(x) + 1*(y)$) {$\bullet$}; 
    \node (6) at ($1*(x) + -1*(y)$) {$\bullet$}; 
    \node (7) at ($2.5*(x) + -1*(y)$) {$\bullet$}; 
    \draw[->] (1)--(5);
    \draw[->] (5)--(6);
    \draw[vecArrow] (1)--(2);
    \draw[vecArrow] (2)--(6);
    \draw[noArrow] (3)--(1); 
    \draw[noArrow] (6)--(7); 
    \draw[noArrow] (4)--(5);

\end{tikzpicture} 
    &
\begin{tikzpicture}[baseline = 0mm]
    \coordinate (x) at (0.8, 0); 
    \coordinate (y) at (0, 0.8); 
    \node (1) at ($-2*(x) + 1*(y)$) {$\bullet$}; 
    \node (2) at ($-2*(x) + -1*(y)$) {$\bullet$}; 
    \node (3) at ($0*(x) + 1*(y)$) {$\bullet$}; 
    \node (4) at ($0*(x) + -1*(y)$) {$\bullet_4$}; 
    \node (5) at ($2*(x) + 1*(y)$) {$\bullet_5$}; 
    \node (6) at ($2*(x) + -1*(y)$) {$\bullet_6$}; 
    \node (7) at ($-3.5*(x) + 1*(y)$) {$\bullet_7$}; 
    \draw[->] (1)--(3);
    \draw[vecArrow] (4)--(6);
    \draw[vecArrow] (1)--(2);
    \draw[->] (3)--(4);
    \draw[vecArrow] (3)--(5); 
    \draw[vecArrow] (5)--(6); 
    \draw[vecArrow] (2)--(4); 
    \draw[noArrow] (1)--(7); 
\end{tikzpicture}  
    \\ 
  {(7)} & {(8)} & {(9)} 
    \\ 
\begin{tikzpicture}[baseline = 0mm]
    \coordinate (x) at (0.8, 0); 
    \coordinate (y) at (0, 0.8); 
    \node (1) at ($-2*(x) + 1*(y)$) {$\bullet$}; 
    \node (2) at ($-2*(x) + -1*(y)$) {$\bullet$}; 
    \node (3) at ($0*(x) + 1*(y)$) {$\bullet$}; 
    \node (4) at ($0*(x) + -1*(y)$) {$\bullet$}; 
    \node (5) at ($2*(x) + 1*(y)$) {$\bullet$}; 
    \node (6) at ($2*(x) + -1*(y)$) {$\bullet$}; 
    \node (7) at ($3.5*(x) + -1*(y)$) {$\bullet$}; 
    \draw[->] (1)--(3);
    \draw[vecArrow] (4)--(6);
    \draw[vecArrow] (1)--(2);
    \draw[->] (3)--(4);
    \draw[vecArrow] (3)--(5); 
    \draw[vecArrow] (5)--(6); 
    \draw[vecArrow] (2)--(4); 
    \draw[noArrow] (6)--(7); 
\end{tikzpicture} 
    &
\begin{tikzpicture}[baseline = 0mm]
    \coordinate (x) at (1, 0); 
    \coordinate (y) at (0, 1); 
    \node (u) at ($1*(x) + 0*(y)$) {$\bullet$}; 
    \node (1) at ($-1*(x) + 1*(y)$) {$\bullet$}; 
    \node (2) at ($-1*(x) + -1*(y)$) {$\bullet$}; 
    \node (3) at ($1*(x) + 1*(y)$) {$\bullet$}; 
    \node (4) at ($1*(x) + -1*(y)$) {$\bullet$}; 
    \node (5) at ($0*(x) + 0*(y)$) {$\bullet$}; 
    \node (6) at ($0*(x) + 1*(y)$) {$\bullet$}; 
    \node (7) at ($-2.5*(x) + -1*(y)$) {$\bullet$}; 
    \draw[->] (1)--(2); 
    \draw[vecArrow] (3)--(u); 
    \draw[vecArrow] (u)--(4); 
    \draw[vecArrow] (1)--(6);
    \draw[vecArrow] (6)--(3);
    \draw[vecArrow] (6)--(5);
    \draw[vecArrow] (5)--(u);
    \draw[->] (2)--(4); 
    \draw[noArrow] (2)--(7);
\end{tikzpicture} 
    &
\begin{tikzpicture}[baseline = 0mm]
    \coordinate (x) at (1, 0); 
    \coordinate (y) at (0, 1); 
    \node (u) at ($1*(x) + 0*(y)$) {$\bullet$}; 
    \node (1) at ($-1*(x) + 1*(y)$) {$\bullet$}; 
    \node (2) at ($-1*(x) + -1*(y)$) {$\bullet$}; 
    \node (3) at ($1*(x) + 1*(y)$) {$\bullet$}; 
    \node (4) at ($1*(x) + -1*(y)$) {$\bullet$}; 
    \node (5) at ($0*(x) + 0*(y)$) {$\bullet$}; 
    \node (6) at ($0*(x) + 1*(y)$) {$\bullet$}; 
    \node (7) at ($2.5*(x) + 1*(y)$) {$\bullet$}; 
    \draw[vecArrow] (1)--(2); 
    \draw[->] (3)--(u); 
    \draw[vecArrow] (u)--(4); 
    \draw[vecArrow] (1)--(6);
    \draw[->] (6)--(3);
    \draw[vecArrow] (6)--(5);
    \draw[vecArrow] (5)--(u);
    \draw[vecArrow] (2)--(4); 
    \draw[noArrow] (3)--(7);
\end{tikzpicture}    
    \\ 
  {(10)} & {(11)} & {(12)} 
    \\ 
\begin{tikzpicture}[baseline = 0mm]
    \coordinate (x) at (0.8, 0); 
    \coordinate (y) at (0, 0.8); 
    \node (1) at ($-2*(x) + 1*(y)$) {$\bullet$}; 
    \node (2) at ($-2*(x) + -1*(y)$) {$\bullet$}; 
    \node (3) at ($0*(x) + 1*(y)$) {$\bullet$}; 
    \node (4) at ($0*(x) + -1*(y)$) {$\bullet$}; 
    \node (5) at ($2*(x) + 1*(y)$) {$\bullet$}; 
    \node (6) at ($2*(x) + -1*(y)$) {$\bullet$}; 
    \node (7) at ($3.5*(x) + -1*(y)$) {$\bullet$}; 
    \node (8) at ($-2*(x) + 0*(y)$) {$\bullet$}; 

    \draw[vecArrow] (1)--(3);
    \draw[->] (4)--(6);
    \draw[vecArrow] (1)--(8);
    \draw[vecArrow] (8)--(2);
    
    \draw[->] (3)--(4);
    \draw[vecArrow] (3)--(5); 
    \draw[vecArrow] (5)--(6); 
    \draw[vecArrow] (4)--(2); 
    \draw[noArrow] (6)--(7); 
\end{tikzpicture} 
    &
\begin{tikzpicture}[baseline = 0mm]
    \coordinate (x) at (0.8, 0); 
    \coordinate (y) at (0, 0.8); 
    \node (1) at ($-2*(x) + 1*(y)$) {$\bullet$}; 
    \node (2) at ($-2*(x) + -1*(y)$) {$\bullet$}; 
    \node (3) at ($0*(x) + 1*(y)$) {$\bullet$}; 
    \node (4) at ($0*(x) + -1*(y)$) {$\bullet$}; 
    \node (5) at ($2*(x) + 1*(y)$) {$\bullet$}; 
    \node (6) at ($2*(x) + -1*(y)$) {$\bullet$}; 
    \node (7) at ($3.5*(x) + -1*(y)$) {$\bullet$}; 
    \draw[vecArrow] (3)--(1);
    \draw[->] (4)--(6);
    \draw[vecArrow] (2)--(1);
    \draw[->] (3)--(4);
    \draw[vecArrow] (3)--(5); 
    \draw[vecArrow] (5)--(6); 
    \draw[vecArrow] (2)--(4); 
    \draw[noArrow] (6)--(7); 
\end{tikzpicture} 
    &
\begin{tikzpicture}[baseline = 0mm]
    \coordinate (x) at (0.8, 0); 
    \coordinate (y) at (0, 0.8); 
    \node (1) at ($-1*(x) + 1*(y)$) {$\bullet$}; 
    \node (5) at ($-1*(x) + -1*(y)$) {$\bullet$}; 
    \node (4) at ($1*(x) + 1*(y)$) {$\bullet$}; 
    \node (2) at ($1*(x) + -1*(y)$) {$\bullet$}; 
    \node (3) at ($0*(x) + 0*(y)$) {$\bullet$}; 
    \node (6) at ($2.5*(x) + -1*(y)$) {$\bullet$}; 
    \draw[->] (1)--(4); 
    \draw[->] (1)--(5); 
    \draw[->] (4)--(3); 
    \draw[->] (4)--(2);
    \draw[vecArrow] (5)--(3);
    \draw[noArrow] (2)--(6);
    \draw[vecArrow] (5)--(2); 
\end{tikzpicture}  
    \\ 
  {(13)} & {(14)} & {} 
    \\ 
\begin{tikzpicture}[baseline = 0mm]
    \coordinate (x) at (0.8, 0); 
    \coordinate (y) at (0, 0.8); 
    \node (1) at ($-1*(x) + 1*(y)$) {$\bullet$}; 
    \node (5) at ($-1*(x) + -1*(y)$) {$\bullet$}; 
    \node (4) at ($1*(x) + 1*(y)$) {$\bullet$}; 
    \node (2) at ($1*(x) + -1*(y)$) {$\bullet$}; 
    \node (3) at ($0*(x) + 0*(y)$) {$\bullet$}; 
    \node (6) at ($-2.5*(x) + 1*(y)$) {$\bullet$}; 
    \draw[vecArrow] (1)--(4); 
    \draw[vecArrow] (1)--(5); 
    \draw[vecArrow] (4)--(3); 
    \draw[->] (4)--(2);
    \draw[->] (5)--(3);
    \draw[noArrow] (1)--(6);
    \draw[vecArrow] (5)--(2); 
\end{tikzpicture} 
    &
\begin{tikzpicture}[baseline = 0mm]
    \coordinate (x) at (0.8, 0); 
    \coordinate (y) at (0, 0.8); 
    \node (1) at ($-1*(x) + 1*(y)$) {$\bullet$}; 
    \node (5) at ($-1*(x) + -1*(y)$) {$\bullet$}; 
    \node (4) at ($1*(x) + 1*(y)$) {$\bullet$}; 
    \node (2) at ($1*(x) + -1*(y)$) {$\bullet$}; 
    \node (3) at ($0*(x) + 0*(y)$) {$\bullet$}; 
    \node (6) at ($2.5*(x) + -1*(y)$) {$\bullet$}; 
    \draw[->] (1)--(4); 
    \draw[->] (1)--(5); 
    \draw[vecArrow] (4)--(3); 
    \draw[->] (4)--(2);
    \draw[->] (5)--(3);
    \draw[noArrow] (2)--(6);
    \draw[vecArrow] (5)--(2); 
\end{tikzpicture} 
    &
    \end{tabular}
\\
\begin{tabular}{cccc}
  {(15)} & {(16)} & {(17)} & {(18)} 
    \\ 
\begin{tikzpicture}[baseline = 0mm]
    \coordinate (x) at (1.3, 0); 
    \coordinate (y) at (0, 0.8); 
    \node (t1) at ($-1*(x) + 1*(y)$) {$\bullet_{t_1}$}; 
    \node (b) at ($-1*(x) + -1*(y)$) {$\bullet_{b}$}; 
    \node (a) at ($1*(x) + 1*(y)$) {$\bullet_{a}$}; 
    \node (t2) at ($1*(x) + -1*(y)$) {$\bullet_{t_2}$}; 
    \node (v) at ($0*(x) + -1*(y)$) {$\bullet_{v}$}; 
    \node (u) at ($0*(x) + 1*(y)$) {$\bullet_{u}$}; 
    \draw[vecArrow] (u)--(a); 
    \draw[vecArrow] (a)--(t2); 
    \draw[vecArrow] (v)--(t2); 
    \draw[vecArrow] (u)--(v);
    \draw[->] (t1)--(u);
    \draw[vecArrow] (t1)--(b);
    \draw[vecArrow] (b)--(v);
\end{tikzpicture} 
    &
\begin{tikzpicture}[baseline = 0mm]
    \coordinate (x) at (1.3, 0); 
    \coordinate (y) at (0, 0.8); 
    \node (v) at ($1*(x) + 0*(y)$) {$\bullet_v$}; 
    \node (t1) at ($-1*(x) + 1*(y)$) {$\bullet_{t_1}$}; 
    \node (c) at ($-1*(x) + -1*(y)$) {$\bullet_c$}; 
    \node (a) at ($1*(x) + 1*(y)$) {$\bullet_a$}; 
    \node (t2) at ($1*(x) + -1*(y)$) {$\bullet_{t_2}$}; 
    \node (b) at ($0*(x) + 0*(y)$) {$\bullet_b$}; 
    \node (u) at ($0*(x) + 1*(y)$) {$\bullet_u$}; 
    \draw[vecArrow] (t1)--(c); 
    \draw[vecArrow] (a)--(v); 
    \draw[vecArrow] (v)--(t2); 
    \draw[vecArrow] (t1)--(u);
    \draw[vecArrow] (u)--(a);
    \draw[vecArrow] (u)--(b);
    \draw[vecArrow] (b)--(v);
    \draw[vecArrow] (c)--(t2); 
\end{tikzpicture} 
    &
\begin{tikzpicture}[baseline = 0mm]
    \coordinate (x) at (1.3, 0); 
    \coordinate (y) at (0, 0.8); 
    \node (t4) at ($-1*(x) + 1*(y)$) {$\bullet_{t_4}$}; 
    \node (t3) at ($-1*(x) + -1*(y)$) {$\bullet_{t_3}$}; 
    \node (u) at ($1*(x) + 1*(y)$) {$\bullet_{u}$}; 
    \node (v) at ($1*(x) + -1*(y)$) {$\bullet_{v}$}; 
    \node (t2) at ($0*(x) + -1*(y)$) {$\bullet_{t_2}$}; 
    \node (t1) at ($0*(x) + 1*(y)$) {$\bullet_{t_1}$}; 
    \node (b) at ($0*(x) + 0*(y)$) {$\bullet_{b}$}; 
    \node (a) at ($1*(x) + 0*(y)$) {$\bullet_{a}$}; 

    \draw[vecArrow] (u)--(a); 
    \draw[vecArrow] (u)--(b); 
    \draw[vecArrow] (a)--(v); 
    \draw[vecArrow] (b)--(v);
    \draw[vecArrow] (t1)--(u);
    \draw[vecArrow] (t1)--(t4);
    \draw[vecArrow] (t3)--(t4);
    \draw[vecArrow] (t3)--(t2);
    \draw[vecArrow] (v)--(t2);
\end{tikzpicture} 
&
\begin{tikzpicture}[baseline = 0mm]
    \coordinate (x) at (1.3, 0); 
    \coordinate (y) at (0, 0.8); 
    \node (t1) at ($-1*(x) + 1*(y)$) {$\bullet_{t_1}$}; 
    \node (t4) at ($-1*(x) + -1*(y)$) {$\bullet_{t_4}$}; 
    \node (a) at ($1*(x) + 1*(y)$) {$\bullet_{a}$}; 
    \node (t2) at ($1*(x) + -1*(y)$) {$\bullet_{t_2}$}; 
    \node (t3) at ($0*(x) + -1*(y)$) {$\bullet_{v=t_3}$}; 
    \node (u) at ($0*(x) + 1*(y)$) {$\bullet_{u}$}; 
    \node (b) at ($0.5*(t4) + 0.5*(t1)$) {$\bullet_{b}$}; 
    \draw[->] (t3)--(t2); 
    \draw[vecArrow] (a)--(t2); 
    \draw[vecArrow] (u)--(a); 
    \draw[vecArrow] (u)--(t3);
    \draw[vecArrow] (t1)--(u);
    \draw[vecArrow] (t1)--(b);
    \draw[vecArrow] (b)--(t4);
    \draw[vecArrow] (t3)--(t4);
\end{tikzpicture} 
\\
  {(19)} & {(20)} & {(21)} 
    \\ 
\begin{tikzpicture}[baseline = 0mm]
    \coordinate (x) at (1.3, 0); 
    \coordinate (y) at (0, 0.8); 
    \node (t4) at ($-1*(x) + 1*(y)$) {$\bullet_{t_4}$}; 
    \node (t3) at ($-1*(x) + -1*(y)$) {$\bullet_{t_3}$}; 
    \node (a) at ($1*(x) + 1*(y)$) {$\bullet_{a}$}; 
    \node (t2) at ($1*(x) + -1*(y)$) {$\bullet_{t_2}$}; 
    \node (v) at ($0*(x) + -1*(y)$) {$\bullet_{v}$}; 
    \node (t1) at ($0*(x) + 1*(y)$) {$\bullet_{u=t_1}$}; 
    \draw[->] (v)--(t2); 
    \draw[vecArrow] (t1)--(a); 
    \draw[vecArrow] (a)--(t2); 
    \draw[vecArrow] (t1)--(t4);
    \draw[vecArrow] (t3)--(t4);
    \draw[vecArrow] (t3)--(v);
    \draw[vecArrow] (t1)--(v);
\end{tikzpicture} 
    &
\begin{tikzpicture}[baseline = 0mm]
    \coordinate (x) at (0.8, 0); 
    \coordinate (y) at (0, 0.8); 
    \node (1) at ($-1*(x) + 1*(y)$) {$\bullet_{t_1}$}; 
    \node (5) at ($-1*(x) + -1*(y)$) {$\bullet_v$}; 
    \node (4) at ($1*(x) + 1*(y)$) {$\bullet_u$}; 
    \node (2) at ($1*(x) + -1*(y)$) {$\bullet_{t_2}$}; 
    \node (3) at ($0*(x) + 0*(y)$) {$\bullet_p$}; 

    \draw[vecArrow] (1)--(4); 
    \draw[vecArrow] (1)--(5); 
    \draw[->] (4)--(3); 
    \draw[->] (4)--(2);
     \draw[vecArrow] (5)--(3);
    \draw[vecArrow] (5)--(2); 
\end{tikzpicture} 
    &
\begin{tikzpicture}[baseline = 0mm]
    \coordinate (x) at (0.8, 0); 
    \coordinate (y) at (0, 0.8); 
    \node (1) at ($-1*(x) + 1*(y)$) {$\bullet_{t_1}$}; 
    \node (5) at ($-1*(x) + -1*(y)$) {$\bullet_v$}; 
    \node (4) at ($1*(x) + 1*(y)$) {$\bullet_u$}; 
    \node (2) at ($1*(x) + -1*(y)$) {$\bullet_{t_2}$}; 
    \node (3) at ($0*(x) + 0*(y)$) {$\bullet_p$}; 

    \draw[vecArrow] (1)--(4); 
    \draw[vecArrow] (1)--(5); 
    \draw[vecArrow] (4)--(3); 
    \draw[->] (4)--(2);
     \draw[->] (5)--(3);
    \draw[vecArrow] (5)--(2); 
\end{tikzpicture}  
     &  \\
     & 
\end{tabular}
    \caption{A list of diagrams that provides finite posets having interval resolution global dimension $1$.} 
    \label{tab:gldim1}
\end{table}
\end{example}

It has been shown in \cite[Theorem 4.1]{AET} that the interval resolution global dimension of full subposets is bounded by that of the base poset. 
From this result, we can discuss the minimality of posets having a fixed interval resolution global dimension, as follows. 

\begin{definition}
    We say that a given finite poset $P$ is 
    \emph{minimal} with respect to the interval resolution global dimension if every proper full subposet of $P$ has the interval resolution global dimension less than that of $P$. 
\end{definition}

For $d>0$, let $\mathcal{M}_d$ be the set of isomorphism classes of finite posets which are minimal with respect to the interval resolution global dimension and have interval resolution global dimension $d$, 
and we additionally identify a given poset with its opposite in $\mathcal{M}_d$. 
Indeed, by Proposition \ref{prop:formulas}, 
a given poset is minimal with respect to the interval resolution global dimension if and only if so is its opposite poset. 

For example, it is obvious that the set $\mathcal{M}_0$ consists of the singleton $\{\bullet\}$, 
whereas by Propositions \ref{prop:dimTree} and \ref{prop:dimAtilde}, the set $\mathcal{M}_1$ is provided by posets having the following diagrams. 

\begin{equation*}    
    \begin{tabular}{cccccc} 
      \begin{tikzpicture}[baseline = 0mm]
          \node (0) at (0,0) {$\bullet$}; 
          \node (1) at (1,0) {$\bullet$}; 
          \node (2) at (-1,0) {$\bullet$}; 
          \node (3) at (0,1) {$\bullet$}; 
          \draw (0)--(1) (0)--(2) (0)--(3);
      \end{tikzpicture}  
      &  
      \begin{tikzpicture}[baseline = 0mm]
          \node (0) at (0,0) {$\bullet$}; 
          \node (1) at (1,0) {$\bullet$}; 
          \node (2) at (0,1) {$\bullet$}; 
          \node (3) at (1,1) {$\bullet$};  
          \draw[->] (0)--(1); 
          \draw[->] (0)--(2);
          \draw[->] (3)--(1);
          \draw[->] (3)--(2);
      \end{tikzpicture}   
    \end{tabular}
\end{equation*}

Thus, one naturally asks the next problem. 

\begin{problem}
    Classify all posets in $\mathcal{M}_2$. 
\end{problem}

Towards this problem, we will provide a partial classification for $\mathcal{M}_2$ in this section. 
More precisely, we prove in Proposition \ref{prop:forM2} that Table \ref{tab:forM2} gives a complete representatives of posets $P\in \mathcal{M}$ satisfying $\deg(P)_3\leq 2$. 
Here, $\deg(P)_i$ denotes the number of elements of $P$ having degree $i$. 

\begin{lemma} \label{lem:deg_geq4}
    If $P\in \mathcal{M}_2$ and $\deg(P)_i > 0$ for some $i\geq 4$, then it is given by {\bf (i)} in Table \ref{tab:forM2}.
\end{lemma}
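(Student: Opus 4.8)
The plan is to locate inside $P$ a small full subposet that already realises interval resolution global dimension $2$, and then use minimality to identify $P$ with it. Suppose $x\in P$ has $\deg(x)=i\ge 4$ in $\Hasse(P)$, choose four distinct neighbours $n_1,n_2,n_3,n_4$ of $x$, and let $S$ be the full subposet of $P$ on $\{x,n_1,n_2,n_3,n_4\}$. Note that $S$ need not be convex in $P$, but this is irrelevant: only the intrinsic poset structure of $S$ will be used.

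First I would verify that $\Hasse(S)$ is the star with centre $x$ and leaves $n_1,\dots,n_4$, so that $S$ is a tree-type poset with $\#\leaf(S)=4$. Each $n_j$ is comparable with $x$ through a covering relation of $P$, and this relation is still a covering in $S$. It remains to exclude an edge between two of the $n_j$'s in $\Hasse(S)$: if $n_j,n_l$ both lie below $x$ then $n_j<n_l<x$ would contradict $n_j\lessdot x$ (and symmetrically if both lie above $x$), while if $n_j<x<n_l$ then $x$ lies strictly between $n_j$ and $n_l$, so $n_j$ and $n_l$ are not related by a covering in $S$. Hence the covers of $S$ are exactly $n_j\lessdot x$ or $x\lessdot n_j$ for $j=1,\dots,4$, and $\leaf(S)=\{n_1,n_2,n_3,n_4\}$.

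By Proposition~\ref{prop:dimTree}, $\intresgldim S=\#\leaf(S)-2=2$. Since $P\in\mathcal{M}_2$ we have $\intresgldim P=2$, and monotonicity of the interval resolution global dimension along full subposet inclusions (\cite[Theorem~4.1]{AET}) gives $\intresgldim S\le\intresgldim P=2$, hence $\intresgldim S=\intresgldim P$. By the definition of minimality, no proper full subposet of $P$ can have interval resolution global dimension $2$, so $S=P$. In particular $x$ has no neighbour outside $\{n_1,\dots,n_4\}$, whence $\deg(x)=4$, and $P$ is the five-element poset whose Hasse diagram is the star $K_{1,4}$, i.e.\ diagram {\bf (i)} of Table~\ref{tab:forM2}. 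The one step requiring care is the Hasse-diagram verification in the second paragraph---in particular ensuring the case analysis is exhaustive, so that a neighbour below and a neighbour above $x$ cannot produce a shortcut edge inside $S$---after which minimality closes the argument immediately.
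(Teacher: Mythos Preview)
Your proof is correct and follows essentially the same approach as the paper's: pick four neighbours of a vertex $x$ with $\deg(x)\ge 4$, observe that the resulting full subposet $S$ is a star with four leaves, apply Proposition~\ref{prop:dimTree} to get $\intresgldim S=2$, and invoke minimality of $P$ to conclude $S=P$. The only difference is that you spell out in detail why $\Hasse(S)$ is exactly the star $K_{1,4}$ (ruling out edges among the $n_j$), whereas the paper simply asserts that $P$ contains such an $S$; your added verification is correct and the case analysis is exhaustive.
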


\begin{proof}
Let $P\in\mathcal{M}_2$. 
If there is an element $x\in P$ such that $\deg(x)\geq 4$, then $P$ contains a poset $S$ of the form {\bf (i)} with some orientation as its full subposet. 
Since $S$ is a tree-type poset having $4$ leaves, 
it has interval resolution global dimension $2$ by Proposition~\ref{prop:dimTree}). 
Moreover, we have $S\in \mathcal{M}_2$. 
Then, $P=S$ must hold by the minimality of $P$. 
\end{proof}

Thus, we focus on the number $\deg(P)_3$ of vertices having degree $3$. 

\begin{proposition}\label{prop:forM2}
We have the following statements. 
\begin{enumerate}[\rm (1)]
    \item Every poset given in Table \ref{tab:forM2} belongs to $\mathcal{M}_2$. 
    \item If $P\in \mathcal{M}_2$ and $\deg(P)_3 \leq 2$, then it is one of posets in Table \ref{tab:forM2}.    
\end{enumerate}
Consequently, Table \ref{tab:forM2} gives a complete list of posets $P$ of $\mathcal{M}_2$ with $\deg(P)_3\leq 2$. 
\end{proposition}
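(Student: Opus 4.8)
The plan is to prove Proposition~\ref{prop:forM2} by combining a careful combinatorial enumeration of candidate posets with the structural tools already at hand, namely Theorem~\ref{thm:cont chains} (and Corollary~\ref{cor:cont chains}) for collapsing $A_n$-type segments, the monotonicity of $\intresgldim$ under full subposet inclusions \cite[Theorem~4.1]{AET}, the formula $\intresgldim P = \intresgldim P^{\op}$ and the reduction via $\Gamma_S$ from Proposition~\ref{prop:formulas}, together with the explicit computations for tree-type posets (Proposition~\ref{prop:dimTree}) and $\tilde A$-type posets (Proposition~\ref{prop:dimAtilde}). The statement has two halves: (1) every poset in Table~\ref{tab:forM2} lies in $\mathcal{M}_2$, and (2) conversely, if $P\in\mathcal{M}_2$ with $\deg(P)_3\le 2$, then $P$ appears in the table; the "Consequently" clause is then immediate from (1) and (2).

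For part~(1), I would go through the finite list in Table~\ref{tab:forM2} one entry at a time. For each poset $P$ in the table I must check two things: that $\intresgldim P = 2$, and that $P$ is minimal, i.e.\ every proper full subposet has interval resolution global dimension $\le 1$. The first is handled case by case: tree-type entries (such as the degree-$4$ star {\bf (i)}) follow directly from Proposition~\ref{prop:dimTree}, giving $\#\leaf - 2 = 2$; entries whose underlying shape is an $\tilde A$-type poset with $\ge 3$ sinks follow from Proposition~\ref{prop:dimAtilde}; the remaining entries are small enough that one computes $\sup_{S\in\Int(P)}\intresdim\Gamma_S$ directly using the irreducible-morphism description of Proposition~\ref{prop:irred_interval}, exactly as in the proofs of Propositions~\ref{prop:dimTree} and~\ref{prop:dimAtilde}. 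For minimality, since $P$ is finite one only needs to check the maximal proper full subposets (delete one vertex at a time); each such subposet is either disconnected, or an $A$-type/bipath poset (hence dimension $0$ by Proposition~\ref{prop:dim0}), or one of the diagrams of Table~\ref{tab:gldim1} (hence dimension $1$ by Example~\ref{ex:int_res_dim_one}), or again small enough to compute directly. Using $\intresgldim P = \intresgldim P^{\op}$ roughly halves this bookkeeping since the table is taken up to opposites.

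For part~(2), the enumeration is organised by $\deg(P)_3$, using Lemma~\ref{lem:deg_geq4} to dispose of the case where some vertex has degree $\ge 4$ (forcing $P$ to be {\bf (i)}). So assume $\deg(P)_i = 0$ for $i\ge 4$, i.e.\ $P$ is a poset all of whose Hasse-diagram vertices have degree $\le 3$, and split into the subcases $\deg(P)_3 = 0$, $\deg(P)_3 = 1$, $\deg(P)_3 = 2$. If $\deg(P)_3 = 0$ then $\Hasse(P)$ is a disjoint union of paths and cycles; connectedness plus $\intresgldim P = 2 > 0$ rules out a single path (dimension $0$ by Proposition~\ref{prop:dim0}) and a single cycle would be $\tilde A$-type, which by Proposition~\ref{prop:dimAtilde} has dimension $\le 2$ and $=2$ only when it has $\ge 3$ sinks --- but then it already contains a strictly smaller such cycle, contradicting minimality, so this subcase is empty (or yields exactly the minimal $\tilde A$-configurations, if any survive). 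If $\deg(P)_3 = 1$, then $\Hasse(P)$ is (topologically) a single trivalent vertex with three arms, each arm being a path ending either at a leaf or looping back --- but with only one degree-$3$ vertex no loop is possible, so $P$ is a "spider" with three legs, i.e.\ a tree with exactly three leaves; by Proposition~\ref{prop:dimTree} this has dimension $1$, not $2$, so this subcase is also empty. Hence the substantive content is $\deg(P)_3 = 2$: topologically $\Hasse(P)$ is a graph with exactly two trivalent vertices and all others bivalent, which (being connected) is one of three shapes --- the "theta" graph (two vertices joined by three internally-disjoint paths), the "dumbbell" (two disjoint loops joined by a path), or the "handcuff/open" shape with pendant legs --- and in each case Corollary~\ref{cor:cont chains} lets me replace every maximal $A_n$-type segment (double arrow) by its equioriented $A_3$ representative without changing $\intresgldim$, reducing to finitely many posets of bounded size; I then compute $\intresgldim$ for each via Proposition~\ref{prop:formulas} and impose minimality to extract precisely the list in Table~\ref{tab:forM2}.

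The main obstacle is the $\deg(P)_3 = 2$ analysis: even after using Corollary~\ref{cor:cont chains} to collapse long arms, one must correctly enumerate all orientations of the three topological shapes (theta, dumbbell, handcuff-with-legs), discard orientations that fail to be valid Hasse diagrams of a poset (no oriented cycles in the Hasse diagram, and the "double arrow vs. double line" distinction of Notation~\ref{notation:double} must be respected), compute $\intresgldim$ for each surviving finite poset, and then run the minimality test --- deleting vertices and checking against Propositions~\ref{prop:dim0}, \ref{prop:dimTree}, \ref{prop:dimAtilde} and Example~\ref{ex:int_res_dim_one}. This is a genuinely long but entirely mechanical case check; the conceptual input is entirely contained in the results cited above, and the role of the proof is to organise the casework so that nothing is missed and no redundant (opposite or subposet-dominated) entry is retained.
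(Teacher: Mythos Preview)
Your overall plan for part~(1) is essentially what the paper does, but your enumeration for part~(2) has genuine gaps.

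\textbf{The case $\deg(P)_3=1$ is wrong.} Your claim ``with only one degree-$3$ vertex no loop is possible'' is false: two of the three arms at the trivalent vertex $v$ can meet each other, producing a cycle through $v$ with one pendant leg (a ``lollipop''). This is exactly the paper's case~(b) in that subcase, and entry~{\bf(iii)} of Table~\ref{tab:forM2} lives here (its unique degree-$3$ vertex sits on a $5$-cycle with one pendant edge). So this subcase is not empty, and the paper spends real work on it, splitting further by the number of sinks on the cycle.

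\textbf{The case $\deg(P)_3=0$ is also non-empty.} The family~{\bf(ii)}$_n$ consists of $\tilde A$-type cycles with alternating orientation; every vertex has degree~$2$. Your proposed argument ``it already contains a strictly smaller such cycle'' fails: deleting vertices from {\bf(ii)}$_{2s}$ breaks the cycle into a zigzag path, never a smaller cycle, so {\bf(ii)}$_{2s}$ does \emph{not} contain {\bf(ii)}$_{2s'}$ for $s'<s$ as a full subposet. These posets are genuinely minimal and belong to $\mathcal M_2$.

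\textbf{The reduction in $\deg(P)_3=2$ does not work as stated.} Corollary~\ref{cor:cont chains} (and Theorem~\ref{thm:cont chains}(a)) only collapses \emph{equioriented} $A_n$-segments; Theorem~\ref{thm:cont chains}(b) needs a leaf at the end. A zigzag segment inside a cycle satisfies neither hypothesis, so you cannot reduce to finitely many bounded-size posets this way. Moreover your list of three topological shapes is incomplete: the paper finds five shapes (a)--(e), including an $H$-shaped tree and a cycle-with-two-pendant-paths, before reaching your ``theta'' (their case~(e)).

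The paper's actual mechanism for part~(2) is different from yours and worth noting: rather than collapsing and then computing, it runs a trichotomy at every step of the case analysis --- either $P$ properly contains some entry of Table~\ref{tab:forM2} (contradicting minimality), or $P$ embeds as a full subposet into one of the dimension-$1$ families of Example~\ref{ex:int_res_dim_one} (forcing $\intresgldim P\le 1$), or $P$ coincides with an entry of Table~\ref{tab:forM2}. For the hard cases (d) and~(e) this is organised by the number $s$ of sinks on the relevant cycle and the positions of $u,v$ relative to the sinks/sources $t_1,\dots,t_{2s}$, and is recorded in explicit tables. Your collapsing idea would at best shorten some equioriented arms, but the substantive casework on orientations and sink counts is unavoidable.
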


\begin{table}[ht]
    \centering
    \begin{tabular}{cccccc}
    {\bf (i)} & {\bf (ii)}$_{n}$ with even $n\geq 4$. & {\bf (iii)} \\ 
    \\ 
    \begin{tikzpicture}
        \node (0) at (0,0) {$\bullet$}; 
        \node (1) at (0.75,0) {$\bullet$}; 
        \node (2) at (-0.75,0) {$\bullet$}; 
        \node (3) at (0,0.75) {$\bullet$}; 
        \node (4) at (0,-0.75) {$\bullet$}; 
        \draw (0)--(1) (0)--(2) (0)--(3) (0)--(4);
    \end{tikzpicture}  
    &  
    \begin{tikzpicture}
        \node (1) at (1,0) {$\bullet_1$}; 
        \node (2) at (2,0) {$\bullet_2$}; 
        \node (h) at (3.5,0) {$\cdots\cdots$};
        \node (n-2) at (5,0) {$\bullet$};           
        \node (n-1) at (6,0) {$\bullet_{n-1}$}; 
        \node (n) at (3.5,1) {$\bullet_{n}$}; 
        \draw[<-] (n)--(1);
        \draw[->] (1)--(2);
        \draw[<-] (2)--($(2)!0.5!(h)$); 
        \draw[->] ($(h)!0.5!(n-2)$)--(n-2); 
        \draw[<-] (n-2)--(n-1);
        \draw[->] (n-1)--(n);
    \end{tikzpicture}  
    & 
    \begin{tikzpicture}
        \node (1) at (-0.75,0.75) {$\bullet$}; 
        \node (2) at (-0.75,-0.75) {$\bullet$};
        \node (3) at (0.75,-0.75) {$\bullet$}; 
        \node (4) at (0.75,0.75) {$\bullet$}; 
        \node (5) at (-0.75,0) {$\bullet$}; 
        \node (6) at (0,0) {$\bullet$}; 
        \draw[->] (1)--(5); 
        \draw[->] (5)--(2); 
        \draw[->] (1)--(4);
        \draw[->] (3)--(2);
        \draw[->] (3)--(4);
        \draw[-] (5)--(6);
    \end{tikzpicture}  
    \\ 
    {\bf (iv)}$_{\ell}$ with $\ell\geq 2$. & {\bf (v)} & {\bf (vi)}
    \\ 
    \begin{tikzpicture}
        \node (1) at (1,0) {$\bullet_1$}; 
        \node (2) at (2,0) {$\bullet_2$}; 
        \node (3) at (3,0) {$\bullet_3$}; 
        \node (h) at (4.5,0) {$\cdots\cdots$};
        \node (n-1) at (5.5,0) {$\bullet_{\ell}$}; 
        \node (0u) at (1,1) {$\bullet$}; 
        \node (0d) at (1,-1) {$\bullet$}; 
        \node (nu) at (5.5,1) {$\bullet$}; 
        \node (nd) at (5.5,-1) {$\bullet$}; 
        \draw[-] (0u)--(1);
        \draw[-] (0d)--(1);
        \draw[->] (1)--(2);
        \draw[<-] (2)--(3);
        \draw[->] (3)--($(3)!0.5!(h)$); 
        \draw[-] (n-1)--(nu);
        \draw[-] (n-1)--(nd);
    \end{tikzpicture}  
    & 
    \begin{tikzpicture}
        \node (1) at (-0.75,0.75) {$\bullet$}; 
        \node (2) at (-0.75,-0.75) {$\bullet$};
        \node (3) at (0.75,-0.75) {$\bullet$}; 
        \node (4) at (0.75,0.75) {$\bullet$}; 
        \node (5) at (0,0) {$\bullet$}; 
        \draw[->] (1)--(2); 
        \draw[->] (1)--(4); 
        \draw[->] (3)--(2);
        \draw[->] (3)--(4);
        \draw[->] (1)--(5);
        \draw[->] (3)--(5);
    \end{tikzpicture}  
    &
    \begin{tikzpicture}
        \node (1) at (-0.75,0.75) {$\bullet$}; 
        \node (2) at (-0.75,-0.75) {$\bullet$};
        \node (3) at (0.75,-0.75) {$\bullet$}; 
        \node (4) at (0.75,0.75) {$\bullet$}; 
        \node (5) at (-2.25,-0.75) {$\bullet$}; 
        \node (6) at (2.25,0.75) {$\bullet$}; 
        \draw[->] (1)--(2); 
        \draw[->] (1)--(4); 
        \draw[<-] (3)--(2);
        \draw[<-] (3)--(4);
        \draw[-] (2)--(5);
        \draw[-] (4)--(6);
    \end{tikzpicture}  
    \\ 
    {\bf (vii)} & {\bf (viii)} & {\bf (ix)} 
    \\ 
    \begin{tikzpicture}
        \node (1) at (-0.75,0.75) {$\bullet$}; 
        \node (2) at (-0.75,-0.75) {$\bullet$};
        \node (3) at (0.75,-0.75) {$\bullet$}; 
        \node (4) at (0.75,0.75) {$\bullet$}; 
        \node (5) at (-2.25,0.75) {$\bullet$}; 
        \node (6) at (-2.25,-0.75) {$\bullet$}; 
        \draw[->] (1)--(2); 
        \draw[->] (1)--(4); 
        \draw[<-] (2)--(3);
        \draw[->] (3)--(4);
        \draw[-] (1)--(5);
        \draw[-] (2)--(6);
    \end{tikzpicture} 
    &
    \begin{tikzpicture}
        \node (1) at (-0.75,0.75) {$\bullet$}; 
        \node (2) at (-0.75,-0.75) {$\bullet$};
        \node (3) at (0.75,-0.75) {$\bullet$}; 
        \node (4) at (0.75,0.75) {$\bullet$}; 
        \node (5) at (2.25,0.75) {$\bullet$}; 
        \node (6) at (2.25,-0.75) {$\bullet$}; 
        \draw[->] (1)--(2); 
        \draw[->] (1)--(4); 
        \draw[->] (3)--(2);
        \draw[->] (3)--(4);
        \draw[<-] (4)--(5);
        \draw[->] (3)--(6);
        \draw[->] (5)--(6);
    \end{tikzpicture} 
    &
        \begin{tikzpicture}
        \node (1) at (-0.75,0.75) {$\bullet$}; 
        \node (2) at (-0.75,-0.75) {$\bullet$};
        \node (3) at (0.75,-0.75) {$\bullet$}; 
        \node (4) at (0.75,0.75) {$\bullet$}; 
        \node (5) at (-0.75,0) {$\bullet$}; 
        \node (6) at (0,0) {$\bullet$}; 
        \draw[->] (1)--(5); 
        \draw[->] (5)--(2); 
        \draw[->] (1)--(4);
        \draw[->] (3)--(2);
        \draw[->] (3)--(4);
        \draw[<-] (5)--(6);
        \draw[->] (6)--(3);
    \end{tikzpicture}     
    \end{tabular}
    \caption{A complete list of posets $P\in \mathcal{M}_2$ with $\deg(P)_3\leq 2$.} 
    \label{tab:forM2}
\end{table}

\begin{proof} 
(1) Suppose that $P$ is a poset in Table \ref{tab:forM2}. 
If it is given by one of forms {\bf (i)}, {\bf (iii)}, {\bf (v)}--{\bf (ix)}, then we have $\intresgldim P = 2$ by a direct calculation. Moreover, every full subposet of $P$ has the interval resolution global dimension $0$ or $1$. 
Thus, $P\in \mathcal{M}_2$. 
On the other hand, we get the assertions for {\bf (ii)}${}_n$ and {\bf (iv)}${}_\ell$ by Propositions \ref{prop:dimAtilde} and \ref{prop:dimTree} respectively.

(2) Let $P\in \mathcal{M}_2$. 
We first recall that 
\begin{itemize}
    \item $P$ is not a full subposet of any poset appearing in Table \ref{ex:int_res_dim_one}.
    \item $P$ has no proper full subposet $S$ appearing in Table \ref{tab:forM2}. 
    \item If there is a full subposet $S$ appearing in Table \ref{tab:forM2}, then we have $P=S\in \mathcal{M}_2$ by the minimality of $P$. 
\end{itemize}
Therefore, in the following proof by case analysis, the discussion will end at either point where $P$ is a full subposet of some poset obtained in Example \ref{ex:int_res_dim_one} or $P$ has a full subposet appearing in Table \ref{tab:forM2}. 

If $\deg(P)_i >0$ for some $i\geq 4$, then this is of the form {\bf (i)} by Lemma \ref{lem:deg_geq4}. 
Notice that $\deg(P)_3=0$ holds in this case. 
In the rest, we assume that $\deg(P)_i = 0$ for all $i\geq 4$ and study each case of $\deg(P)_3 \in \{0,1,2\}$. 

Firstly, we assume that $\deg(P)_3 = 0$. 
In this case, every element $x\in P$ satisfies $\deg(x)\in\{1,2\}$. 
Thus, the underlying graph of $\Hasse(P)$ is a line or a cycle.
This means that $P$ is one of $A$-type posets, bipath posets or $\tilde{A}$-type posets. 
Then, it must be an $\tilde{A}$-type poset since $A$-type posets and bipath poset have interval resolution global dimensions $0$ by Proposition \ref{prop:dim0}. 
In this situation, $P$ has $s\geq 2$ sinks by Proposition \ref{prop:dimAtilde} and hence it always contain the poset $S$ of the form {\bf (ii)}$_{2s}$ as its full subposet. 

Secondly, we assume that $\deg(P)_3 = 1$. 
In this case, we claim that $P$ is of the form {\bf (iii)} in Table \ref{tab:forM2}. 
In fact, the Hasse diagram of $P$ is either (a) or (b):  
\begin{equation}
    \begin{tabular}{cccc}
       $(a)$ &  $(b)$ \\  
    \begin{tikzpicture}
        \coordinate (x) at (1.25, 0); 
        \coordinate (y) at (0, 1.25); 
        \node (u) at ($0*(x) + 1*(y)$) {$\bullet_u$}; 
        \node (v) at ($0*(x) + 0*(y)$) {$\bullet_v$}; 
        \node (3) at ($1*(x) + 0*(y)$) {$\bullet_3$}; 
        \node (4) at ($-1*(x) + 0*(y)$) {$\bullet_4$}; 
        \draw[noArrow] (u)--(v);
        \draw[noArrow] (v)--(3);
        \draw[noArrow] (v)--(4);
    \end{tikzpicture} 
    & 
    \begin{tikzpicture}
        \coordinate (x) at (1.25, 0); 
        \coordinate (y) at (0, 1.25); 
        \node (u) at ($0*(x) + 1*(y)$) {$\bullet_u$}; 
        \node (v) at ($0*(x) + 0*(y)$) {$\bullet_v$}; 
        \node (2) at ($-1*(x) + 1*(y)$) {$\bullet_2$}; 
        \node (3) at ($1*(x) + 0*(y)$) {$\bullet_3$}; 
        \node (4) at ($-1*(x) + 0*(y)$) {$\bullet_4$}; 
         \node (C) at ($-0.5*(x) +0.5*(y)$) {$C$};
        \draw[noArrow] (2)--(4);
        \draw[noArrow] (u)--(2);
        \draw[noArrow] (u)--(v);
        \draw[noArrow] (v)--(3);
        \draw[noArrow] (v)--(4);
    \end{tikzpicture}  
    \end{tabular}
\end{equation}
Here, the vertex $v$ is a unique element such that $\deg(v) = 3$ and $C$ is a cycle containing $v$. 
Then, this is not the case $(a)$ since they have interval resolution global dimensions $\#\leaf(P) - 2 = 1$ by Proposition \ref{prop:dimTree}. 
We consider the case $(b)$. 
Let $s$ be the number of sinks of $C$. 
If $s = 1$, then $P$ is a full subposet of a poset in Example \ref{ex:int_res_dim_one}(1) or (2) depending on the position of the vertex $v$ in the Hasse diagram. 
Next, we assume that $s = 2$. 
If $v$ is a sink, then $P$ is a full subposet of a poset in Example \ref{ex:int_res_dim_one}(3).
Otherwise, $P$ contains the poset of the form {\bf (iii)} as its full subposet. 
Lastly, if $s\geq 3$, then $P$ contains the poset of the form {\bf (ii)}$_{2s}$ as its proper full subposet.

Thirdly, we consider the case $\deg(P)_3 = 2$. 
In this case, the Hasse diagram of $P$ is one of the following. 
\begin{center}
    \begin{tabular}{ccccccc}
        (a) &  (b) & (c) \\
    \begin{tikzpicture}
        \coordinate (x) at (1.25, 0); 
        \coordinate (y) at (0, 1.25); 
        \node (u) at ($0*(x) + 1*(y)$) {$\bullet_u$}; 
        \node (v) at ($0*(x) + 0*(y)$) {$\bullet_v$}; 
        \node (1) at ($1*(x) + 1*(y)$) {$\bullet$}; 
        \node (2) at ($-1*(x) + 1*(y)$) {$\bullet$}; 
        \node (3) at ($1*(x) + 0*(y)$) {$\bullet$}; 
        \node (4) at ($-1*(x) + 0*(y)$) {$\bullet$}; 
        \draw[noArrow] (u)--(1);
        \draw[noArrow] (u)--(2);
        \draw[noArrow] (u)--(v);
        \draw[noArrow] (v)--(3);
        \draw[noArrow] (v)--(4);
    \end{tikzpicture}  
    & 
    \begin{tikzpicture}
        \coordinate (x) at (1.25, 0); 
        \coordinate (y) at (0, 1.25); 
        \node (3) at ($0*(x) + 1*(y)$) {$\bullet$}; 
        \node (v) at ($0*(x) + 0*(y)$) {$\bullet_v$}; 
        \node (1) at ($1*(x) + 1*(y)$) {$\bullet$}; 
        \node (2) at ($-1*(x) + 1*(y)$) {$\bullet$}; 
        \node (u) at ($1*(x) + 0*(y)$) {$\bullet_u$}; 
        \node (4) at ($-1*(x) + 0*(y)$) {$\bullet$}; 
        \node (6) at ($2*(x) + 0*(y)$) {$\bullet$}; 
        
        \draw[noArrow] (v)--(u);
        \draw[noArrow] (v)--(3);
        \draw[noArrow] (v)--(4);
        \draw[noArrow] (u)--(1);
        \draw[noArrow] (u)--(6);
        \draw[noArrow] (4)--(2);
        \draw[noArrow] (2)--(3);
        
    \end{tikzpicture} 
    & 
    \begin{tikzpicture}
        \coordinate (x) at (1.25, 0); 
        \coordinate (y) at (0, 1.25); 
        \node (3) at ($0*(x) + 1*(y)$) {$\bullet$}; 
        \node (v) at ($0*(x) + 0*(y)$) {$\bullet_v$}; 
        \node (1) at ($1*(x) + 1*(y)$) {$\bullet$}; 
        \node (2) at ($-1*(x) + 1*(y)$) {$\bullet$}; 
        \node (u) at ($1*(x) + 0*(y)$) {$\bullet_u$}; 
        \node (4) at ($-1*(x) + 0*(y)$) {$\bullet$}; 
        \node (5) at ($2*(x) + 1*(y)$) {$\bullet$}; 
        \node (6) at ($2*(x) + 0*(y)$) {$\bullet$}; 
        
        \draw[noArrow] (v)--(u);
        \draw[noArrow] (v)--(3);
        \draw[noArrow] (v)--(4);
        \draw[noArrow] (u)--(1);
        \draw[noArrow] (u)--(6);
        \draw[noArrow] (4)--(2);
        \draw[noArrow] (2)--(3);
        \draw[noArrow] (1)--(5);
        \draw[noArrow] (5)--(6);
    \end{tikzpicture} 
\end{tabular}
\end{center}

\begin{equation}\label{eq:CDdiagram}
\begin{tabular}{cc}
    (d) & (e) \\ 
    \begin{tikzpicture}
        \coordinate (x) at (1.25, 0); 
        \coordinate (y) at (0, 1.25); 
        \node (3) at ($0*(x) + 1*(y)$) {$\bullet$}; 
        \node (v) at ($0*(x) + 0*(y)$) {$\bullet_v$}; 
        \node (1) at ($1*(x) + 1*(y)$) {$\bullet$}; 
        \node (u) at ($1*(x) + 0*(y)$) {$\bullet_u$}; 
        \node (4) at ($-1*(x) + 0*(y)$) {$\bullet$}; 
        \node (6) at ($2*(x) + 0*(y)$) {$\bullet$}; 
        \node (C) at ($0.5*(x) + 0.5*(y)$) {$C$};
    
        \draw[noArrow] (v)--(u);
        \draw[noArrow] (v)--(3);
        \draw[noArrow] (v)--(4);
        \draw[noArrow] (u)--(1);
        \draw[noArrow] (u)--(6);
        \draw[noArrow] (1)--(3);
    \end{tikzpicture} 
    &
    \begin{tikzpicture}
        \coordinate (x) at (1.25, 0); 
        \coordinate (y) at (0, 1.25); 
        \node (u) at ($0*(x) + 1*(y)$) {$\bullet_u$}; 
        \node (v) at ($0*(x) + 0*(y)$) {$\bullet_v$}; 
        \node (1) at ($1*(x) + 1*(y)$) {$\bullet$}; 
        \node (2) at ($-1*(x) + 1*(y)$) {$\bullet$}; 
        \node (3) at ($1*(x) + 0*(y)$) {$\bullet$}; 
        \node (4) at ($-1*(x) + 0*(y)$) {$\bullet$}; 
        \node (D) at ($0.5*(x) + 0.5*(y)$) {$D$};
        \node (C) at ($-0.5*(x) + 0.5*(y)$) {$C$};
    
        \draw[noArrow] (u)--(1);
        \draw[noArrow] (u)--(2);
        \draw[noArrow] (u)--(v);
        \draw[noArrow] (v)--(3);
        \draw[noArrow] (v)--(4);
        \draw[noArrow] (2)--(4);
        \draw[noArrow] (1)--(3);
    \end{tikzpicture}  
\end{tabular}
\end{equation}
Here, vertices $u$ and $v$ are elements of $P$ such that $\deg(v) = \deg(u) = 3$, and $C,D$ are cycles containing both $u,v$. 
From now on, we study each case of (a)--(e). 
In the case of (a),(b) or (c), $P$ has a full subposet of the form {\bf (iv)}$_{\ell}$ for some $\ell \geq 2$. 

We consider the case (d). 
Let $T = \{t_1,t_2,\ldots,t_{2s}\}$ be the set of sinks and sources of $C$, where $s$ is the number of sinks of $C$. 
We reorder elements $t_1,t_2,\ldots,t_{2s}$ of $T$ in clockwise direction so that $t_i$ are sources (resp., sinks) for odd (resp., even) $i$. We regard $T$ as a full subposet of $P$. 
Let $\bar{u},\bar{v} \in P\setminus C$ be elements adjacent to $u$ and $v$ respectively. 
\begin{enumerate}
    \item[\rm (d-1)] 
We assume that $s = 1$. 
If $u,v\not\in \{t_1,t_2\}$, then $P$ is a poset of the form {\bf (vi)}. 
Otherwise, we may assume that $u=t_1$ without loss of generality (We can replace $P$ with its opposite poset if necessary). 
If moreover $v=t_2$ (resp., $v \lessdot t_2$), then $P$ is a full subposet of a poset in Example \ref{ex:int_res_dim_one}(2) (resp., (4)). 
Otherwise, we have $v \nlessdot t_2$ and an element $\beta_{v,t_2}\in P$ such that $v < \beta_{v,t_2} < t_2$. 
In addition, there is an element $x\in C$ incomparable to $v$ such that $t_1 < x < t_2$ by the definition of Hasse diagrams. 
Then, a set $\{v,u,\bar{u},\bar{v},\beta_{v,t_2},x\}$ forms a proper full subposet of $P$ of the form {\bf (iv)}$_{2}$. 
\item[\rm (d-2)] Secondly, we assume that $s=2$. 
If $u\not\in T$, 
then a proper full subposet consisting of $T \cup \{\bar{u},u\}$ is of the form {\bf (iii)}. 
If $u=t_2$ and $v=t_1$, then $P$ is a poset of the form $\mathbf{(vii)}$.
If $\{u,v\} = \{t_2,t_4\}$, then $P$ is a full subposet of a poset in Example \ref{ex:int_res_dim_one}(3).

\item[\rm (d-3)] 
Finally, if $s \geq 3$, then $T$ is a proper full subposet of $P$ of the form {\bf (ii)}$_{2s}$. 
\end{enumerate}

We study the case (e). 
Let $Z := (P\setminus (C\cap D)) \cup \{u,v\}$. 
We regard $Z$ as a (not full) subposet of $P$ 
whose Hasse diagram is provided by the restriction of that of $P$ into $Z$, and write $\preceq$ for the partial order on $Z$. 
In fact, $x \preceq y$ for two $x,y\in Z$ implies $x\leq y$ in $P$, but the converse does not hold in general. 
Let $T = \{t_1,\ldots,t_{2s}\}$ be the set of sinks and sources of $Z$, where $s$ is the number of sinks of $Z$. 
We reorder elements $t_1,\ldots, t_{2s}$ of $Z$ in clockwise direction so that $t_i$ are sources (resp., sinks) for odd (resp., even) $i$. We regard $T$ as a full subposet of $P$.

We first assume that $u$ and $v$ are comparable as $u < v$. 
In this case, by permuting the role of paths in $P$ if necessary, we take cycles $C$ and $D$ so that $C\cap D$ is a totally ordered set. 
We need the following observations. 
In the case of $s=1$, if $u\npreceq v$ in $Z$ (i.e., $u$ and $v$ are incomparable in $Z$), then there are elements $\alpha_u,\alpha_v$ in $Z$ satisfying $u\prec \alpha_u \prec t_2$ and $t_1\prec \alpha_v \prec v$ by the definition of Hasse diagrams. 
On the other hand, in the case of $s=2$, 
if $t_1\preceq u \prec t_2$ and $t_3 \preceq v \prec t_2$, then there exists an element $\gamma$ in $Z$ such that $u\prec \gamma \prec t_2$ by the definition of Hasse diagrams. 
Similarly, if $t_1\preceq u \prec t_2$ and $t_3 \preceq v \prec t_4$, then there is $\gamma'$ in $Z$ such that $t_1 \prec \gamma' \prec t_4$. 
Finally, if $s\geq 5$, then it is not difficult to check that $P$ contains a proper full subposet of the form {\bf (ii)$_{\ell}$} for some $\ell \geq 3$. 
Under these notations, depending on Conditions in Table \ref{tab:(3)-(e)}, we find that $P$ is either a full subposet of one of Example \ref{ex:int_res_dim_one}(15)--(19) or has a full subposet $S$ in Table \ref{tab:forM2}, as in Diagram in that table.

\begin{table}[th]
    \centering
    \begin{tabular}{c|c|c|c|c|c|}
        $s$ & {\rm Conditions} & $S$ & {\rm Diagram} \\ \hline \hline 
        \multirow{3}{*}{$1$} 
         & $u\npreceq v$, $t_1\lessdot u$ & - & Example \ref{ex:int_res_dim_one}\,(15) \\ 
        & $t_1\preceq u \prec v \preceq t_2$ & - & Example \ref{ex:int_res_dim_one}\,(16)  \\ 

        & $u\npreceq v$, $t_1\nlessdot u$, $v\nlessdot t_2$ & $\{u,v,\alpha_u,\alpha_v,\beta_{t_1,u},\beta_{v,t_2}\}$ & {\bf (iv)$_2$} 
        \\ \hline
        \multirow{8}{*}{$2$} & $t_1\preceq u \prec v \preceq t_2$&  - & Example \ref{ex:int_res_dim_one}\,(17)  \\ 
        & $t_1\prec u \prec t_2$, $t_3\prec v \prec  t_2$ & $\{t_1,t_3,t_4, u,v,\gamma\}$ & {\bf (iii)} \\  
        & $t_1\preceq u \prec t_2$, $v = t_3 \lessdot t_2$ & - & Example \ref{ex:int_res_dim_one}\,(18) \\  
        & $t_1\prec u \prec t_2$, $v = t_3 \nlessdot t_2$ & $\{t_1,t_3,t_4,u,\beta_{t_3,t_2},\gamma\}$ & {\bf (iv)$_2$} \\  
        & $t_1\prec u \prec t_2$, $t_3 \prec v \prec t_4$ & $T\cup \{u,v\}$ & {\bf (vii)} \\  
        & $u=t_1$, $t_3 \prec v \prec t_2$, $v\lessdot t_2$ & - & Example \ref{ex:int_res_dim_one}\,(19)\\ 
        & $u=t_1$, $t_3 \prec v \prec t_2$, $v\nlessdot t_2$ & $\{t_1,t_3,t_4,v,\beta_{v,t_2},\gamma\}$ & {\bf (vii)} \\ 

        & $u=t_1$, $v = t_3$, $v\nlessdot t_2$, $v\nlessdot t_4$ & $\{t_1,t_3,\beta_{t_1,t_2}, \beta_{t_1,t_4}, \gamma,\gamma'\}$ & {\bf (iv)$_2$} 
        \\ \hline
        \multirow{7}{*}{$3$} & $t_1\preceq u \prec t_2$, $t_3 \prec v\preceq t_4$ & $T$&{\bf (viii)} \\ 
        &$t_1\preceq u \prec t_2$, $t_5 \prec v\preceq t_4$  & $T$ & {\bf (viii)} \\ 
        &$t_1 \preceq u \prec t_2,  v= t_3$ & $T$ &{\bf (iii)} \\  
        &$t_1 \preceq u \prec t_2,  v= t_5$ & $T$ &{\bf (iii)} \\ 
        &$t_1 \preceq u \prec v\preceq t_2$  & $T$ &{\bf (ii)$_6$} \\ 
        &$t_1 \preceq u \prec t_2,\  t_1 \prec v \preceq  t_6$ & $T$& {\bf (ii)$_6$} \\ 
        &$u =t_2, \ v =t_5 $ & $T$&{\bf (iv)$_{2}$}  
       \\ \hline
        \multirow{7}{*}{$4$}&$t_1 \preceq u \prec v \preceq t_2$& $T$& {\bf (ii)$_{8}$}\\ 
        &$t_1 \preceq u \prec t_2 \succeq v \succ t_3$& $T$ & {\bf (ii)$_{8}$}\\ 
        &$t_1 \preceq u \prec t_2$,  
        $t_3 \preceq u \prec t_4$ & $\{t_1,t_4,t_5,t_6,t_7,t_8\}$& {\bf (ii)$_{6}$}\\ 
        &$t_1 \preceq u \preceq t_2$, $t_5 \prec v \preceq t_4$ & $\{t_1,t_4,t_5,t_6,t_7,t_8\}$& {\bf (ii)$_{6}$} \\ 
        &$t_1 \preceq u \preceq t_2$, $v=t_5$& $\{t_1,t_4,t_5,t_6,t_7,t_8\}$ & {\bf (iii)}\\
        &$t_1 \prec u \prec t_2$, $t_5 \prec v \preceq t_6$ & $\{t_1,t_2,t_5,t_6,t_7,t_8\}$ & {\bf (vii)}\\ 
        &$u=t_2$, $t_5 \prec v \preceq t_6$ & 
        $\{t_1,t_2,t_3,t_5,t_6,t_7\}$ 
        & {\bf (iv)$_2$} \\ \hline 
        \end{tabular}
    \caption{The case (e) with $u<v$. }
    \label{tab:(3)-(e)}
\end{table}

Next, we study the case where $u$ and $v$ are incomparable.
In this case, we first note that $\{u,v\}\neq \{t_i,t_{i+1}\}$ for all $i$.
Let $r, r'$ be the numbers of sinks and sources in $W := (C\cap D) \setminus \{u,v\}$, respectively. 
By taking the opposite of posets, we may assume $r' \leq r$.  
Then, we have $r - r' \leq 1$ and $r \neq 0$. 
If $r\geq 2$, it is not difficult to check that $P$ has a full subposet of the form {\bf (ii)$_{2\ell}$} for some $\ell \geq 3$. 
Thus, we have $(r,r')\in \{(1,0),(1,1)\}$. 
By permuting a role of $u$ and $v$ if needed, we have a source $p$ in $W$ such that $u<p$. 
If $r'=1$, then let $q$ be a sink in $W$ such that $u<p>q$. 
Then, similar to the previous case, 
we conclude that $P$ is either a full subposet of one of Example \ref{ex:int_res_dim_one}(20)--(21) or has a full subposet in Table \ref{tab:forM2}, as in Table \ref{tab:(3)-(e')}.

\begin{table}[t]
    \centering
    \begin{tabular}{c|c|c|c|c|c|}
        $s$ & $(r,r')$&{\rm Conditions} & $S$ & {\rm Diagram} \\ \hline \hline 

        \multirow{4}{*}{$1$} &\multirow{4}{*}{$(1,0)$} & $u \lessdot p$, $u \lessdot t_2$ &  - & 
        Example \ref{ex:int_res_dim_one}\,(20) \\ 
        && $u \lessdot p$, $v \lessdot t_2$ &  - & Example \ref{ex:int_res_dim_one}\,(21) \\ 

        && $u\nlessdot p$, $v\nlessdot p$ & $\{t_1,t_2, u,v,\beta_{u,p},\beta_{v,p}\}$ & {\bf (vi)} \\ 
        %
        
        \cline{2-5}
        &$(1,1)$ & - &$T\cup\{u,v,p,q\}$ & {\bf (ix)} \\ 
        \hline 
        \multirow{7}{*}{$2$}& \multirow{4}{*}{$(1,0)$} & $t_1 \preceq u \prec t_2$, $t_3\preceq  v \prec t_2$ & $T \cup \{p\} $ & {\bf (v)} \\
        &&$t_1 \preceq u \prec t_2$, $t_3 \prec v \preceq t_4$ & $T \cup \{p\}$ & {\bf (v)} \\
        &&$t_1 \prec u \prec t_2$, $t_1 \prec v \prec t_4$& $\{t_1,t_2,t_4,u,v,p\}$ & {\bf (vi)} \\ 
        &&$t_1 \prec u \prec t_2$, $v=t_4$ & $T\cup \{u, p\}$ & {\bf (ix)} \\ \cline{2-5}
        &\multirow{3}{*}{$(1,1)$} & $u\not\in T$ & $T\cup \{u,p\}$ & {\bf (iii)} \\ 
        && $u=t_1$, $v = t_3$ & $\{u,t_2,t_4,p,q\}$ & {\bf (v)} \\ 
        && $u=t_2$, $v = t_4$ & $\{t_1,t_3,v,p,q\}$ & {\bf (v)} \\ \cline{2-5} 

        \hline
        \multirow{2}{*}{$\geq 3$} & $(1,0)$ & - &$T$ & {\bf (ii)$_{2s}$} \\ \cline{2-5}
         & $(1,1)$ & - &$T$ & {\bf (ii)$_{2s}$} \\ \hline 
        \end{tabular}
    \caption{The case (e) with incomparable $u,v$. }
    \label{tab:(3)-(e')}
\end{table}

In the above discussions, 
all cases which we need are considered by taking permutations of labels of elements and opposite posets. 
This finishes the proof. 
\end{proof}

\begin{proposition}
    There are no posets $P\in\mathcal{M}_2$ satisfying $\deg(P)_3 = 3$. 
\end{proposition}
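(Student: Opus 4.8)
The plan is to argue by contradiction, using exactly the machinery developed for Proposition~\ref{prop:forM2}. Suppose $P\in\mathcal{M}_2$ with $\deg(P)_3=3$. By Lemma~\ref{lem:deg_geq4} every vertex of $\Hasse(P)$ has degree at most $3$, so $P$ has precisely three vertices of degree $3$ and all remaining vertices of degree $1$ or $2$. Three obstructions will rule out every configuration: (i) $P$ is not a full subposet of any poset in Table~\ref{tab:gldim1}, for otherwise $\intresgldim P\leq 1$ by \cite[Theorem~4.1]{AET}; (ii) $P$ has no proper full subposet of interval resolution global dimension $\geq 2$, in particular it does not properly contain any poset listed in Table~\ref{tab:forM2} (all of which lie in $\mathcal{M}_2$ and satisfy $\deg_3\leq 2$, hence are distinct from $P$); (iii) $P$ contains no equioriented $A_n$-type segment with $n\geq 4$ and no $A_n$-type segment with $n\geq 4$ whose extreme element $\ell_n$ is a leaf of $P$, since by Theorem~\ref{thm:cont chains} deleting $\ell_4,\dots,\ell_n$ would produce a proper full subposet with the same interval resolution global dimension $2$. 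Combining (iii) with reflections at sinks and sources (Proposition~\ref{prop:mutation_poset}, as in the proof of Theorem~\ref{thm:cont chains}(b)), every pendant path of $\Hasse(P)$ may be taken to be an equioriented $A_{\leq 3}$-type segment.

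Next I would cut the problem down to finitely many configurations. Encode $\Hasse(P)$ by its \emph{topological type}: the multigraph $G$ whose vertices are the three trivalent vertices of $P$ together with its $\ell:=\#\leaf(P)$ leaves, obtained by suppressing every vertex of degree $2$; then $\Hasse(P)$ is recovered from $G$ by subdividing edges and choosing an orientation. A handshake count gives $|E(G)|=(9+\ell)/2$, so $\ell$ is odd, and the cyclomatic number of $\Hasse(P)$ equals $|E(G)|-|V(G)|+1=(5-\ell)/2\geq 0$, forcing $\ell\in\{1,3,5\}$. If $\ell=5$ then $\Hasse(P)$ is a tree and $P$ is a tree-type poset with five leaves, so $\intresgldim P=3$ by Proposition~\ref{prop:dimTree}, contradicting $P\in\mathcal{M}_2$; hence $\ell\in\{1,3\}$. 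If $\ell=3$ then $\Hasse(P)$ is unicyclic, and a short counting argument (a branch point off the unique cycle $C$ would force a fourth leaf) shows that all three trivalent vertices lie on $C$, each carrying one pendant path; moreover the induced orientation of $C$ has at most two sinks, since otherwise $C$ is a proper full subposet of $\tilde A$-type with $\intresgldim C=2$ by Proposition~\ref{prop:dimAtilde}. If $\ell=1$ then $\Hasse(P)$ is bicyclic, and $G$ is one of the finitely many connected multigraphs on three trivalent and one univalent vertex with five edges (a subdivided theta graph with a pendant, or a subdivision of one of the small configurations involving a loop or a double edge between branch points). In every case, using (iii) and, where applicable, reflections together with Theorem~\ref{thm:cont chains} to shorten segments, only boundedly many combinatorial types of $P$ remain, and each is examined together with its admissible orientations.

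For each of the finitely many surviving configurations I would then check that $P$ either contains one of the posets of Table~\ref{tab:forM2} as a proper full subposet---which contradicts (ii), since those posets have $\deg_3\leq 2$ and are therefore distinct from $P$---or is itself a full subposet of one of the posets of Example~\ref{ex:int_res_dim_one} collected in Table~\ref{tab:gldim1}, which contradicts (i). Since every configuration is excluded, $\mathcal{M}_2$ contains no poset with $\deg(P)_3=3$.

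The main obstacle is the size and bookkeeping of the case analysis rather than any single hard step: three branch points give many topological types once loops and multiple edges between branch points are allowed, and each type carries several admissible orientations. The genuinely delicate points are (a) making the reduction to finitely many cases watertight---in particular, a long \emph{internal} (non-pendant) segment lying on a cycle cannot always be shortened by a reflection, so one must argue that such a segment either renders the ambient cycle a proper $\tilde A$-type subposet of interval resolution global dimension $2$ (handled by Proposition~\ref{prop:dimAtilde}) or else occurs within one of the explicitly computed families of Proposition~\ref{prop:dimTree} and Example~\ref{ex:int_res_dim_one}---and (b) for each surviving small poset, pinpointing the correct gldim-$2$ subposet in Table~\ref{tab:forM2} or the correct ambient gldim-$1$ poset in Table~\ref{tab:gldim1}; the latter amounts to comparison against the complete lists of Example~\ref{ex:int_res_dim_one} and Proposition~\ref{prop:forM2}, but it is where essentially all of the work lies.
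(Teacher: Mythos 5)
Your organizing framework—encoding $\Hasse(P)$ by its suppressed multigraph on the three trivalent vertices and the leaves, and then using the handshake identity and cyclomatic number to pin $\ell=\#\leaf(P)$ down to $\{1,3,5\}$—is a genuinely cleaner way to structure the case analysis than the paper's direct enumeration (a)--(g), and the $\ell=5$ (tree) and $\ell=1$ (bicyclic) dichotomy matches the paper's cases (a) and (c),(e),(g) respectively. The arithmetic ($|E(G)|=(9+\ell)/2$, Betti number $(5-\ell)/2$) is correct, and the conclusion in the $\ell=5$ case via Proposition~\ref{prop:dimTree} is right.

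However, there is a genuine error in your treatment of the $\ell=3$ (unicyclic) case. You claim that ``a branch point off the unique cycle $C$ would force a fourth leaf,'' and conclude that all three trivalent vertices lie on $C$. This is false. If a trivalent vertex $v$ lies in the pendant tree $T$ attached to $C$ at a vertex $q$, then $q$ itself has degree $3$ in $P$ (two cycle edges plus the bridge into $T$), so $q$ is one of your three trivalent vertices. If $T$ contains a second trivalent vertex $w$, then $T$ has four tree-leaves, one of which is identified with $q$, leaving exactly three leaves of $P$; so $\ell=3$ with only one trivalent vertex ($q$) on $C$. Alternatively, if $v$ is the only trivalent vertex in $T$ (giving two leaves) and the third trivalent vertex sits on $C$ carrying a single pendant path (one more leaf), again $\ell=3$ with a trivalent vertex off $C$. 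These configurations are precisely the paper's cases (b) and (d)---which your argument omits entirely. The case where all three trivalent vertices are on $C$ is only the paper's case (f). So your ``short counting argument'' needs to be replaced by an actual enumeration of the (at least three) unicyclic topological types, and each one must then be run through the Table~\ref{tab:forM2}/Example~\ref{ex:int_res_dim_one} dichotomy as in the $\ell=1$ cases.

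Beyond that error, the rest of the proposal is a plan rather than a proof: the reduction to finitely many configurations is plausible once the missing unicyclic types are restored and the caveat you raise about internal (non-pendant) segments is addressed, but the bulk of the work---matching each surviving small poset against Tables~\ref{tab:forM2} and \ref{tab:gldim1}, as the paper does in Table~\ref{tab:case_g}---remains to be carried out.
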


\begin{proof}    
We assume that $\deg(P)_3 = 3$. In this case, the Hasse diagram of $P$ is one of the following, where the vertices $u$, $v$, and $w$ are elements of $P$ such that $\deg(v) = \deg(u) = \deg (w)=3$.

\begin{center}
\begin{tabular}{ccccc}
   (a) & (b) & (c) \\ 
   \begin{tikzpicture}[baseline = 0mm]
    \coordinate (x) at (1.1, 0); 
    \coordinate (y) at (0, 1.1); 
    \node (1) at ($-1*(x) + 1*(y)$) {$\bullet$}; 
    \node (2) at ($0*(x) + 1*(y)$) {$\bullet$}; 
    \node (3) at ($1*(x) + 1*(y)$) {$\bullet$}; 
    \node (4) at ($-2*(x) + 0*(y)$) {$\bullet$}; 
    \node (5) at ($2*(x) + 0*(y)$) {$\bullet$}; 
    \node (w) at ($-1*(x) + 0*(y)$) {$\bullet_{w}$}; 
    \node (v) at ($0*(x) + 0*(y)$) {$\bullet_{v}$}; 
    \node (u) at ($1*(x) + 0*(y)$) {$\bullet_{u}$}; 
    \draw[noArrow] (4)--(w);
    \draw[noArrow] (w)--(v);
    \draw[noArrow] (v)--(u);
    \draw[noArrow] (u)--(5);
    \draw[noArrow] (1)--(w);
    \draw[noArrow] (2)--(v);
    \draw[noArrow] (3)--(u);
\end{tikzpicture}
&
    \begin{tikzpicture}[baseline = 0mm]
    \coordinate (x) at (1.1, 0); 
    \coordinate (y) at (0, 1.1); 
    \node (1) at ($-1*(x) + 1*(y)$) {$\bullet$}; 
    \node (2) at ($0*(x) + 1*(y)$) {$\bullet$}; 
    \node (3) at ($1*(x) + 1*(y)$) {$\bullet$}; 
    \node (4) at ($-2*(x) + 0*(y)$) {$\bullet$}; 
    \node (5) at ($2*(x) + 0*(y)$) {$\bullet$}; 
    \node (6) at ($-2*(x) + 1*(y)$) {$\bullet$}; 
    \node (w) at ($-1*(x) + 0*(y)$) {$\bullet_{w}$}; 
    \node (v) at ($0*(x) + 0*(y)$) {$\bullet_{v}$}; 
    \node (u) at ($1*(x) + 0*(y)$) {$\bullet_{u}$}; 
    \draw[noArrow] (4)--(w);
    \draw[noArrow] (w)--(v);
    \draw[noArrow] (v)--(u);
    \draw[noArrow] (u)--(5);
    \draw[noArrow] (1)--(w);
    \draw[noArrow] (2)--(v);
    \draw[noArrow] (3)--(u);
    \draw[noArrow] (1)--(6);
    \draw[noArrow] (6)--(4);
    \end{tikzpicture}
&
    \begin{tikzpicture}[baseline = 0mm]
    \coordinate (x) at (1.1, 0); 
    \coordinate (y) at (0, 1.1); 
    \node (1) at ($-1*(x) + 1*(y)$) {$\bullet$}; 
    \node (2) at ($0*(x) + 1*(y)$) {$\bullet$}; 
    \node (3) at ($1*(x) + 1*(y)$) {$\bullet$}; 
    \node (4) at ($-2*(x) + 0*(y)$) {$\bullet$}; 
    \node (5) at ($2*(x) + 0*(y)$) {$\bullet$}; 
    \node (6) at ($-2*(x) + 1*(y)$) {$\bullet$}; 
    \node (7) at ($2*(x) + 1*(y)$) {$\bullet$}; 
    \node (w) at ($-1*(x) + 0*(y)$) {$\bullet_{w}$}; 
    \node (v) at ($0*(x) + 0*(y)$) {$\bullet_{v}$}; 
    \node (u) at ($1*(x) + 0*(y)$) {$\bullet_{u}$}; 
    \draw[noArrow] (4)--(w);
    \draw[noArrow] (w)--(v);
    \draw[noArrow] (v)--(u);
    \draw[noArrow] (u)--(5);
    \draw[noArrow] (1)--(w);
    \draw[noArrow] (2)--(v);
    \draw[noArrow] (3)--(u);
    \draw[noArrow] (1)--(6);
    \draw[noArrow] (6)--(4);
    \draw[noArrow] (3)--(7);
    \draw[noArrow] (7)--(5);
    \end{tikzpicture}
\end{tabular}
\end{center}

\begin{center}
\begin{tabular}{ccccc}
    (d) & (e) \\ 
    \begin{tikzpicture}[baseline = 0mm]
    \coordinate (x) at (1.1, 0); 
    \coordinate (y) at (0, 1.1); 
    \node (1) at ($-1*(x) + 1*(y)$) {$\bullet$}; 
    \node (2) at ($0*(x) + 1*(y)$) {$\bullet$}; 
    \node (3) at ($1*(x) + 1*(y)$) {$\bullet$}; 
    \node (4) at ($-2*(x) + 0*(y)$) {$\bullet$}; 
    \node (5) at ($2*(x) + 0*(y)$) {$\bullet$}; 
    \node (w) at ($-1*(x) + 0*(y)$) {$\bullet_{w}$}; 
    \node (v) at ($0*(x) + 0*(y)$) {$\bullet_{v}$}; 
    \node (u) at ($1*(x) + 0*(y)$) {$\bullet_{u}$}; 
    \draw[noArrow] (4)--(w);
    \draw[noArrow] (w)--(v);
    \draw[noArrow] (v)--(u);
    \draw[noArrow] (u)--(5);
    \draw[noArrow] (1)--(w);
    \draw[noArrow] (2)--(v);
    \draw[noArrow] (3)--(u);

    \draw[noArrow] (2)--(3);

    \end{tikzpicture}
& 
    \begin{tikzpicture}[baseline = 0mm]
    \coordinate (x) at (1.1, 0); 
    \coordinate (y) at (0, 1.1); 
    \node (1) at ($-1*(x) + 1*(y)$) {$\bullet$}; 
    \node (2) at ($0*(x) + 1*(y)$) {$\bullet$}; 
    \node (3) at ($1*(x) + 1*(y)$) {$\bullet$}; 
    \node (4) at ($-2*(x) + 0*(y)$) {$\bullet$}; 
    \node (5) at ($2*(x) + 0*(y)$) {$\bullet$}; 
    \node (6) at ($-2*(x) + 1*(y)$) {$\bullet$}; 
    \node (w) at ($-1*(x) + 0*(y)$) {$\bullet_{w}$}; 
    \node (v) at ($0*(x) + 0*(y)$) {$\bullet_{v}$}; 
    \node (u) at ($1*(x) + 0*(y)$) {$\bullet_{u}$}; 
    \draw[noArrow] (4)--(w);
    \draw[noArrow] (w)--(v);
    \draw[noArrow] (v)--(u);
    \draw[noArrow] (u)--(5);
    \draw[noArrow] (1)--(w);
    \draw[noArrow] (2)--(v);
    \draw[noArrow] (3)--(u);
    \draw[noArrow] (1)--(6);
    \draw[noArrow] (6)--(4);
    \draw[noArrow] (2)--(3);
    \end{tikzpicture}
\end{tabular}   
\end{center}

\begin{center}
\begin{tabular}{ccccc}
    (f) & (g) \\ 
    \begin{tikzpicture}[baseline = 0mm]
    \coordinate (x) at (1.2, 0); 
    \coordinate (y) at (0, 1.2); 
    \node (1) at ($-1*(x) + 1*(y)$) {$\bullet_{w}$}; 
    \node (2) at ($0*(x) + 1*(y)$) {$\bullet$}; 
    \node (4) at ($-2*(x) + 0*(y)$) {$\bullet$}; 
    \node (6) at ($-2*(x) + 1*(y)$) {$\bullet$}; 
    \node (w) at ($-1*(x) + 0*(y)$) {$\bullet_{v}$}; 
    \node (v) at ($0*(x) + 0*(y)$) {$\bullet_{u}$}; 
    \node (u) at ($1*(x) + 0*(y)$) {$\bullet$}; 
    \node (C) at ($-0.5*(x) + 0.5*(y)$) {$C$};
    \draw[noArrow] (4)--(w);
    \draw[noArrow] (w)--(v);
    \draw[noArrow] (v)--(u);
    \draw[noArrow] (1)--(w);
    \draw[noArrow] (2)--(v);
    \draw[noArrow] (1)--(2);
    \draw[noArrow] (1)--(6);
    \end{tikzpicture}
& 
    \begin{tikzpicture}[baseline = 0mm]
    \coordinate (x) at (1.2, 0); 
    \coordinate (y) at (0, 1.3); 
    \node (w) at ($0*(x) + 0.5*(y)$) {$\bullet_{w}$}; 
    \node (3) at ($0.6*(x) + 0.5*(y)$) {$\bullet$}; 
    \node (6) at ($-1*(x) + 1*(y)$) {$\bullet$}; 
    \node (7) at ($1.5*(x) + 1*(y)$) {$\bullet$}; 
    \node (9) at ($-1*(x) + 0*(y)$) {$\bullet$}; 
    \node (v) at ($0*(x) + 0*(y)$) {$\bullet_{v}$}; 
    \node (1) at ($1.5*(x) + 0*(y)$) {$\bullet$}; 
    \node (u) at ($0*(x) + 1*(y)$) {$\bullet_{u}$};
    \node (C) at ($-0.5*(x) + 0.5*(y)$) {$C$};
    \node (D) at ($1*(x) + 0.5*(y)$) {$D$};

    \draw[noArrow] (u)--(w);
    \draw[noArrow] (w)--(v);
    \draw[noArrow] (v)--(9);
    \draw[noArrow] (9)--(6);
    \draw[noArrow] (6)--(u);
    \draw[noArrow] (u)--(7);
    \draw[noArrow] (7)--(1);
    \draw[noArrow] (1)--(v);
    \draw[noArrow] (w)--(3);
    \end{tikzpicture}

\end{tabular}
\end{center}

We prove the claim by showing that, for each case of (a)--(g), 
either $P$ contains a proper full subposet appearing in Table \ref{tab:forM2} or $P$ coincides with one of posets in Example \ref{ex:int_res_dim_one}. 

If this is one of (a), (b), (c), (d), and (e), then 
$P$ clearly contains a proper full subposet $S$ of the form {\bf (iv)}$_{\ell}$ for some $\ell \geq 2$. 

We study the case (f). 
Let $T=\{t_1,\ldots,t_{2s}\}$ be the set of sinks and sources of $C$, where $s$ is the number of sinks of $C$. 
We label $t_1,\ldots, t_{2s}$ in clockwise direction so that $t_i$ are sources (resp., sinks) for odd (resp., even) $i$. 
In addition, let $\bar{u},\bar{v},\bar{w}$ be elements in $P\setminus C$ adjacent to $u,v,w$ respectively. 

\begin{enumerate}
\item[\rm (f-1)]
If $s=1$, then the following statements hold.
\begin{itemize}
    \item If $u$ and $v$ are incomparable, then $u,v\not\in T$ holds. 
    We have a proper full subposet consisting of $T\cup\{u, v, \bar{u}, \bar{v}\}$ which is of the form {\bf (vi)}.  
    \item If $t_1 < u < v < w < t_2$, then we have a proper full subposet $S:=\{u,v,w,\bar{u},\bar{v},t_1\}$, which is of the form $(\mathbf{iv})_2$.     
    
    \item If $t_1 = u < v < w = t_2$ with 
    $u \lessdot v \lessdot w$, 
    then $P$ is a poset in Example \ref{ex:int_res_dim_one}(5).
    
    \item If $t_1 = u < v < w = t_2$ with 
    $u \nlessdot v$, we take an element $c$ in $C$ such that $c$ and $v$ are incomparable. 
    Then, we have a proper full subposet $\{v,w,\beta_{u,v},\bar{v},\bar{w},c\}$, which is of the form {\bf (iv)$_2$}.
\end{itemize}

\item[\rm (f-2)]
If $s =2$, then the following statements hold.
\begin{itemize}
    \item If $u\not\in T$, then $P$ contains a proper full subposet $T \cup \{u,\bar{u}\}$ which is of the form {\bf (iii)}. 
    \item Suppose that $\{u,v,w\} \subseteq T$. 
    In this case, we may assume that $u =t_1$ and $v=t_2$. 
    Then, $P$ contains a proper full subposet $\{u,v,t_3,t_4,\bar{u},\bar{v}\}$ which is of the form {\bf (vii)}. 
    
\end{itemize}

\item[\rm (f-3)]
If $s \geq 3$, then $T$ is of the form {\bf (ii)$_{2s}$}. 
\end{enumerate}

Finally, we study the case (g). 
Let $Z := C\cup D$ and $\bar{w}$ an element in $P \setminus Z$ adjacent to $w$. 
In this case, $Z$ is of the form (e) in \eqref{eq:CDdiagram}. 
From discussion for the case (e) in the proof of Proposition \ref{prop:forM2}, $Z$ should be one of posets in Examples \ref{ex:int_res_dim_one} (15)--(21) (see Tables \ref{tab:(3)-(e)} and \ref{tab:(3)-(e')}) from the minimality of $P$. 
In Table \ref{tab:case_g}, we provide all possible cases depending on the shape of $Z$ and the position of $w$. 
This table shows that $P$ is either a full subposet of one of posets in Example \ref{ex:int_res_dim_one} or has a full subposet $S$ in Table \ref{tab:forM2}. 
Thus, we conclude that $P \notin \mathcal{M}_2$. 
\end{proof}

\begin{table}
    \centering
    \begin{tabular}{c|c|c|c|c|c}
       Form of $Z$ & {\rm Conditions} & $S$ & {\rm Diagram} \\ \hline \hline 
        \multirow{8}{*}{Example \ref{ex:int_res_dim_one}\,(15)} 
          & $w=t_1,  u \lessdot v$ & -  
 &     Example \ref{ex:int_res_dim_one}(6) \\ 

    & $w=t_2,  u \lessdot v$ &  -
 &     Example \ref{ex:int_res_dim_one}(7) \\ 

 &$w=t_1,  u \nlessdot v$ &  $\{ \bar{w},\beta_{u,v},a,b,w,u\}$
 & $(\mathbf{iv})_2$    \\ 

     &$w=t_2,  u \nlessdot v$ &  $\{ \bar{w},\beta_{u,v},a,b,w,v \}$
 & $(\mathbf{iv})_2$     \\ 
& $t_1 < w < v, u \nleq w $ & $\{ \bar{w},a,u,v,w,t_1\}$ &  $(\mathbf{vi})$  \\  

  &$u < w < v$ & $\{ \bar{w},b,u,v,w,t_2\}$ &  $(\mathbf{iv})_2$  \\  

&$v < w < t_2$ & $\{\bar{w},b,u,v,w,t_2\}$ &  $(\mathbf{iv})_2$  \\  

&$u < w \nleq v,  v \nleq w <t_2 $ & $\{\bar{w},b,u,v,w,t_2\}$ &  $(\mathbf{vi})$  
        \\ \hline
        \multirow{6}{*}{Example \ref{ex:int_res_dim_one}\,(16)}  &  $u \lessdot w \lessdot v$ & -
 &    Example \ref{ex:int_res_dim_one}(8) \\ 

 &  $t_1 \lessdot t_2 \lessdot v$ & -
 &    Example \ref{ex:int_res_dim_one}(9) \\ 

    &$w = t_1 $ & $\{\bar{w},t_1,a,b,c \}$
 & $\mathbf{(i)}$    \\ 

&$u \lessdot w $ & $\{\beta_{u,w} ,w,\bar{w},a,b,c,t_2\}$
 & $(\mathbf{iv})_2$    \\ 

&$t_1 \lessdot w$ & $\{\beta_{t_1,w} ,w,\bar{w},a,b,c,t_2\}$
 & $(\mathbf{iv})_2$   \\ 

 &$t_1 < w < u$ & $\{a,b,u,w,\bar{w},t_1\}$
 & $(\mathbf{iv})_2$   
        \\ \hline
        \multirow{2}{*}{Example \ref{ex:int_res_dim_one}\,(17)}  &$w=t_1$ & $\{\bar{w},t_1,t_4,u,a,b\}$&$(\mathbf{iv})_2$  \\ 
   & $w \notin \{t_1,t_2,t_3,t_4\}$ & $\{\bar{w},w, t_1, t_2,t_3,t_4 \}$ &  $(\mathbf{iii})$ 
       \\ \hline
        \multirow{8}{*}{Example \ref{ex:int_res_dim_one}\,(18)}&$w = t_2, u \lessdot v$  &  -   &   Example \ref{ex:int_res_dim_one}(10) \\

    &$w=t_2, u \nlessdot v $ & $\{t_2, t_4,a,v, \beta_{u,v} \bar{w}\}$&{\bf (iv)$_2$}  \\ 
    &$w=t_1$ & $\{t_1, u,v,a,b, \bar{w}\}$&{\bf (iv)$_2$}  \\     
    &$w=t_4$ & $\{t_2,t_3,t_4,\bar{w},u,b \}$ &  {\bf (iv)$_2$} \\  

   & $t_1 < w <t_3$ & $\{t_1,t_3,u,a ,w, \bar{w}\}$ & {$(\mathbf{iv})_2$} \\ 

   & $t_3 < w < t_4$& $\{t_2,t_4,u,v,w,  \bar{w} \}$ & $(\mathbf{iv})_2$\\ 
    
    & $t_1 < w < t_4$  & $\{t_1,t_4,u,a,w, \bar{w} \}$ & $(\mathbf{vi})$ \\ 

        &$u < w < t_2,\ w \nleq t_3$  & $\{t_2,t_4, u,v,w,\bar{w}\}$ & {\bf (vi)${}$} \\ \hline 
         \multirow{9}{*}{Example \ref{ex:int_res_dim_one}\,(19)} &$w = t_2, u \lessdot v$  &  -   & Example \ref{ex:int_res_dim_one}(11) \\

  &  $w=t_2,  u \nlessdot v$ & $\{\bar{w}, a, \beta_{u,v} , v, t_2, t_3 \}$&{\bf (iv)$_2$} \\ 
   & $t_1 < w < t_4$ & $\{  u, v, t_3, t_4, \bar{w}, w  \}$ &  {\bf (iii)} \\

  &$t_1 < w <v$ & $\{  u, v, t_3, t_4, \bar{w}, w  \}$  & {$(\mathbf{iii})$} \\ 

  &  $t_3 < w <v$ & $\{  u, v, t_3, t_4, \bar{w}, w  \}$  & {$(\mathbf{iii})$} \\ 

  &$t_3 < w < t_4 $ & $\{  u, v, t_3, t_4, \bar{w}, w  \}$  & {$(\mathbf{iii})$} \\

  &  $ w =t_3 $& $\{\bar{w}, v, t_1, t_2, t_3, t_4\}$ & $(\mathbf{iii})$\\ 

      &  $ w =t_4 $& $\{\bar{w}, a, v, t_1, t_2, t_3\}$ & $(\mathbf{iii})$\\

     &$u < w < t_2 \text{ and } w \nleq v$  & $\{\bar{w}, a, u, v, t_3, t_4\}$ & $(\mathbf{vi})$  \\
        \hline 
         \multirow{9}{*}{Example \ref{ex:int_res_dim_one}\,(20)} & $t_1 < w < u$ & $\{t_1,t_2,w,\bar{w},p\}$&$(\mathbf{i})$  \\ 

    &$t_1 < w < v$ & $\{t_1,t_2,w,\bar{w},p\}$&$(\mathbf{i})$  \\ 

 &$v < w < p$ & $\{t_2,u,v,w,\bar{w},p\}$&$(\mathbf{ix})$  \\

 & $v < w < t_2$ & $\{t_2,u,v,w,\bar{w},p\}$&$(\mathbf{ix})$  \\ 
   & $w = t_2,\ t_1 \lessdot u,\  t_1 \lessdot u $ & - &   Example \ref{ex:int_res_dim_one} (12)  \\  

   & $w = t_2,\ t_1 \nlessdot u $ &  $\{t_2,u,v,p,\bar{w}, \beta_{t_1,u}\}$  & $\mathbf{(vii)}$   \\  

&    $w = t_2,\ t_1 \nlessdot v $ &  $\{t_2,u,v,p,\bar{w}, \beta_{t_1,v}\}$  & $\mathbf{(vii)}$   \\  
   & $w = t_1, \ v \lessdot p $ & - &  Example \ref{ex:int_res_dim_one} (13)  \\  

   & $w = t_1, \ v \nlessdot t_2, \ v \nlessdot p $ & $\{t_1 ,u,v, \bar{w}, \beta_{v,p}, \beta_{v,t_2}  \}$ & $(\mathbf{iv})_2$  \\
        \hline         
         \multirow{7}{*}{Example \ref{ex:int_res_dim_one}\,(21)} &$t_1 < w < u$ & $\{t_1,t_2,w,\bar{w},p\}$&$(\mathbf{i})$  \\ 

   & $t_1 < w < v$ & $\{t_1,t_2,w,\bar{w},p\}$&$(\mathbf{i})$ \\ 

& $u < w < p$ & $\{t_2,u,v,w,\bar{w},p\}$&$(\mathbf{ix})$  \\

 & $v < w < t_2$ & $\{t_2,u,v,w,\bar{w},p\}$&$(\mathbf{ix})$  \\ 
   & $w = t_2,\ t_1 \lessdot u,\  t_1 \lessdot v $ & - &  Example \ref{ex:int_res_dim_one}(14)  \\  

   & $w = t_2,\ t_1 \nlessdot u $ &  $\{t_2,u,v,p,\bar{w}, \beta_{t_1,u}\}$  & $\mathbf{(vii)}$   \\  
  &  $w = t_1$ & - &  Example \ref{ex:int_res_dim_one}(13)  \\
        \hline         
        \end{tabular}
    \caption{The case (g)}
    \label{tab:case_g}
\end{table}


\noindent 
{\bf Acknowledgements.}
This work is supported by JSPS Grant-in-Aid for Transformative Research Areas (A) (22H05105). 
S.T. is supported by JST SPRING, Grant Number JPMJSP2148 and by JST, CREST Grant Number JPMJCR24Q6.

\bibliographystyle{alpha} 
\bibliography{main.bib}
\end{document}